\newtheorem{theorem}{Theorem}[section]
\newtheorem{corollary}[theorem]{Corollary}
\newtheorem{definition}[theorem]{Definition}
\newtheorem{lemma}[theorem]{Lemma}
\newtheorem{proposition}[theorem]{Proposition}
\newtheorem{remark}[theorem]{Remark}
\DeclareMathOperator{\Arg}{Arg}
\DeclareMathOperator{\cratio}{cr}
\DeclareMathOperator{\Area}{Area}
\DeclareMathOperator{\Real}{Re}
\DeclareMathOperator{\Imaginary}{Im}
\renewcommand{\Re}{\Real}
\renewcommand{\Im}{\Imaginary}
\newcommand\xleftrightarrow[2][]{%
	\ext@arrow 9999{\longleftrightarrowfill@}{#1}{#2}}
\newcommand\longleftrightarrowfill@{%
	\arrowfill@\leftarrow\relbar\rightarrow}
\title[Deformations of circle packings]{Minimal surfaces from infinitesimal deformations of \\circle packings}
\author{Wai Yeung Lam}
\address{Wai Yeung Lam\\
	Mathematics Department \\Brown University\\ Providence\\ RI 02912}
\email{lam@math.brown.edu}
\begin{document}
		
\begin{abstract}
	We study circle packings with the combinatorics of a triangulated disk in the plane and parametrize deformations of circle packings in terms of vertex rotation and cross ratios. We show that there is a Weierstrass representation formula relating infinitesimal deformations of circle packings to discrete minimal surfaces of Koebe type. Furthermore, every minimal surface of Koebe type can be extended naturally to a discrete minimal surface of general type. In this way, discrete minimal surfaces via Steiner's formula are unified.  
\end{abstract}
\maketitle
\section{Introduction}

William Thurston proposed circle packings as a discrete analogue of holomorphic functions \cite{Stephenson2005}. A circle packing is a configuration of circles with prescribed tangency patterns. By varying radii, there are many circle packings in the plane with the same tangency pattern. Motivated from the classical theory that holomorphic functions map infinitesimal circles to infinitesimal circles, Thurston suggest two circle packings with the same combinatorial structure exhibit a discrete holomorphic function. Using this idea, Rodin and Sullivan proved the convergence of circle packings to the Riemann mapping \cite{Rodin1987}. It results in a rich theory with many applications. For example, David Eppstein showed that every circle packing yields a configuration of planar soap bubbles \cite{Eppstein2014}. 

Further notions of discrete holomorphic functions have been proposed, such as circle patterns and vertex scaling. A circle pattern is a configuration of circles where neighboring circles intersect with prescribed angles \cite[Chap. 13]{Thurston1982}. In the case where all intersection angles are $\pi/2$,  it is called an orthogonal circle pattern \cite{Schramm1997}. Generally, given a triangle mesh in the plane, one can assign each triangular face with its circumscribed circle and obtain a circle pattern. Instead of circles, vertex scaling concerns the edge lengths of the triangle mesh \cite{Luo2004,Bobenko2010}. Both theories are closely related and can be expressed neatly in terms of complex cross ratios  \cite{Lam2015a}.          

An ongoing development for discrete holomorphic functions is their connection to the discrete surface theory \cite{Bobenko2006}. Particularly, it motivates from the Weierstrass representation in the smooth theory that every simply connected surface in space with vanishing mean curvature corresponds to a pair of holomorphic functions. It is interesting to obtain an analogous statement in the discrete theory, i.e. to obtain discrete minimal surfaces from discrete holomorphic functions.

However, similar to the case of discrete holomorphic functions, several definitions of discrete minimal surfaces have been proposed but their relation remains unclear.  One approach to define discrete minimal surfaces is via Steiner's formula, which relates the mean curvature of a surface to the area of its parallel surfaces. Suppose $f:U \subset \mathbb{R}^2 \to \mathbb{R}^3$ is a smooth surface with Gauss map $N:U \to \mathbb{S}^2$. For small $t$, we have parallel surfaces $f^t:= f +tN$ and Gauss map $N^t=N$. Steiner's formula states that the area 2-form of the parallel surfaces can be written as
\[
\langle f^t_x \times f^t_y, N \rangle dx \wedge dy = (1- 2H t + K t^2) \langle f_x \times f_y, N \rangle dx \wedge dy
\]
where $H$ and $K$ is the mean curvature and the Gaussian curvature of $f$. Following the smooth theory, Bobenko, Pottmann and Wallner \cite{Bobenko2010a} defined mean curvature for polyhedral surfaces $f:V \to \mathbb{R}^3$ equipped with certain vertex normals $N:V \to \mathbb{R}^3$ such that $f^t:= f + t N$ has planar faces with face normals unchanged. In this case, the area of $f^{t}$ on every face $\phi$ is written in the form
\begin{align}\label{eq:steiner}
\Area(f+tN)_{\phi} &=  (1- 2H t + K t^2)\Area(f)_{\phi}
\end{align} 
where $H$ and $K$ is defined as the mean curvature and the Gaussian curvature of $f$ on the face $\phi$ with respect to $N$. A discrete surface is minimal if its mean curvature $H$ vanishes. 

Nevertheless there are two problems in defining discrete minimal surfaces via Steiner's formula. Firstly, there is an ambiguity in the choice of $N$ \cite{Hoffmann2014}. Different choices of vertex normals lead to different classes of discrete minimal surfaces. In order to generalize previous examples from integrable systems, Bobenko, Pottmann and Wallner \cite{Bobenko2010a} considered three kinds of polyhedral surfaces $N$ as vertex normals:
\begin{itemize}
	\item Vertex offset: $N$ has vertices on the unit sphere and $N$ is parallel to $f$, i.e. $N_j - N_i \parallel f_j - f_i$ for every edge $ij \in E$. 
	\item Edge offset (\emph{Koebe type}): $N$ has edges tangent to the unit sphere and $N$ is parallel to $f$. In this case, $N$ is also called a Koebe polyhedral surface.
	\item Face offset: $N$ has faces tangent to the unit sphere and $N$ is parallel to $f$.
\end{itemize}
Secondly, it is unclear how to obtain these discrete minimal surfaces in a way analogous to Weierstrass representation formula from discrete holomorphic functions in general. Previous construction were restricted to quadrilateral meshes and special circle patterns \cite{Bobenko1996, Bobenko2006}. 

In order to unify the different classes of discrete minimal surfaces and relate them to discrete holomorphic functions, the author \cite{Lam2016} considered a generalization of discrete minimal surfaces:
\begin{definition}\label{def:genmin}
	Suppose $f:V \to \mathbb{R}^3$ is a polyhedral surface. We define its integrated mean curvature $H: F \to \mathbb{R}$ for every face $\phi$
	\begin{equation} \label{eq:meancur}
	\phi=(v_1,v_2,\dots, v_n, v_{n+1}=v_1) \in F \mapsto H_{\phi}= \sum_{j=1}^n \ell_{jj+1} \tan \frac{\alpha_{jj+1}}{2}
	\end{equation}
	where $\ell$ and $\alpha$ denote edge lengths and dihedral angles over edges. We say $f$ is a discrete minimal surface of general type if $H \equiv 0$.
\end{definition}
The quantity $\ell \tan \frac{\alpha}{2}$ is regarded as the principal curvature across edges, which vanishes if and only if the two neighboring faces are flattened. An important feature of the mean curvature formula $\eqref{eq:meancur}$ is that it is well defined for all polyhedral surfaces in space without referring to the choice of vertex normals. In the special case that a polyhedral surface admits face offsets, the formula \eqref{eq:meancur} coincides with the mean curvature in Steiner's formula \cite{Karpenkov2014}. 

In \cite{Lam2016,Lam2015a}, the author showed that there is a one-to-one correspondence between infinitesimal deformations of circle patterns and discrete minimal surfaces of general type via a Weierstrass representation formula. These discrete minimal surfaces include all the minimal surfaces via face offset and all the known examples via vertex offset. It remains a question if they include minimal surfaces via edge offset, which are called \emph{Koebe type} in \cite{Bobenko2017} (see Section \ref{sec:kobe}). The goal of this paper is to unify this remaining type of discrete minimal surface.

We first introduce discrete holomorphic quadratic differentials of Koebe type derived from infinitesimal deformations of circle packings  (Proposition \ref{prop:infmobkoebe}). Considering a circle packing in the plane with the combinatorics of a triangulated disk $G=(V,E,F)$, it is known that the circle packing admits non-trivial deformations with the same tangency pattern whenever $G$ has more than three boundary vertices. In particular, we call the change in cross ratios under an infinitesimal deformation of a circle packing as a discrete holomorphic quadratic differential (see Section \ref{sec:param}). It is analogous to the classical Schwarzian derivative that measures the degree how a conformal deformation fails to be a M\"{o}bius transformation.

We then establish a Weierstrass representation for discrete minimal surfaces of Koebe type in terms of discrete holomorphic quadratic differentials (Theorem \ref{thm:koebeweierstrass}). Together with Proposition \ref{prop:infmobkoebe}, it implies the correspondence between discrete minimal surfaces of Koebe type and infinitesimal deformations of circle packings.

Furthermore, we show that there is a natural construction to obtain a discrete minimal of general type from that of a Koebe type (Theorem \ref{thm:kobegeneral}), which leads to a unification of discrete minimal surfaces. It is known that every circle packing with the combinatorics of a triangle mesh admits a dual circle packing. The union of the two circle packings forms an orthogonal circle pattern, which is a configuration of circles where neighbouring circles intersect orthogonallly. An infinitesimal deformation of a circle packing then induces an infinitesimal deformation of the underlying circle pattern preserving the intersection angles. It has been shown in \cite{Lam2016} that the later yields a discrete minimal surface of general type. 

The main results are summarized in the following diagram.
	\begin{figure}[h!] \centering
	\begin{tikzpicture}[scale=0.6, every node/.style={scale=0.95}]
	\matrix (m) [matrix of math nodes,row sep=3em,column sep=4em,minimum width=2em]
	{
		\text{Infinitesimal deformations of circle packings} & \text{Infinitesimal deformations of circle patterns} \\
		\text{Holomorphic quadratic differentials of Koebe type} & \\
		\text{Minimal surfaces of Koebe type} & \text{Minimal surfaces of general type} \\};
	\path[-stealth]
	(m-1-1) edge  (m-2-1) 
	(m-1-1) edge  (m-1-2)
		(m-2-1) edge  (m-3-1) 
			(m-3-1) edge  (m-2-1) 
			(m-3-1) edge  (m-3-2) 
				(m-1-2) edge  (m-3-2) 
				(m-3-2) edge  (m-1-2)
	(m-2-1) edge (m-1-1);
	\path (m-1-1)--(m-2-1) node    [midway, left] {Proposition \ref{prop:infmobkoebe}};
	\path (m-2-1)--(m-3-1) node    [midway, left] {Theorem \ref{thm:koebeweierstrass}};
	\path (m-3-1)--(m-3-2) node    [midway, above] {Theorem \ref{thm:kobegeneral}};
	\path (m-1-2)--(m-3-2) node    [midway, right] {\cite{Lam2016}};
	\end{tikzpicture}
\end{figure}

Discrete minimal surfaces play an important role in structure-preserving discretization of differential geometry. Among many possible definitions of curvature for polyhedral surfaces, we are interested in those with rich mathematical structures and have connection to other established discrete theories. For example, this paper relates the integrated mean curvature formula (Eq. \eqref{eq:meancur}) to circle packings. Finding a proper definition of discrete minimal surfaces is a cornerstone to establish discrete analogues of classical differential geometry like Bernstein's theorem. 

In section \ref{sec:background}, we review notations and previous results on circle packing as well as discrete minimal surfaces. In section \ref{sec:param} we discuss parametrization of circle packings in terms of cross ratios. In section \ref{sec:verroation}, we consider infinitesimal deformations of circle packings and derive discrete holomorphic quadratic differentials. In section \ref{sec:kobe}, a Weierstrass representation formula is established to obtain discrete minimal surfaces of Koebe type from discrete holomorphic quadratic differentials. In section \ref{sec:unif}, we show that every discrete minimal surface of Koebe type can be extended naturally into that of general type. In section \ref{sec:har}, we purpose another parametrization of circle packings using vertex rotation.

 \section{Background} \label{sec:background}
 
 We consider a \textbf{circle packing} in the plane with the combinatorics of a triangulated disk $G=(V,E,F)$ with boundary. Each vertex $\mathfrak{u} \in V(G)$ is associated with a circle $C_{\mathfrak{u}}$ of radius $R_{\mathfrak{u}}$ centered at $c_{\mathfrak{u}}$ in such a way that for every edge $\mathfrak{uv} \in E(G)$, circles $C_{\mathfrak{u}}$ and $C_\mathfrak{v}$ touch at $z_i$ (Figure \ref{fig:orientation}). We denote $V_{int} \subset V$ and $E_{int} \subset E$ the set of interior vertices and interior edges. Furthermore we consider the set $\vec{E}$ of oriented edges. The edge oriented from $\mathfrak{u}$ to $\mathfrak{v}$ is written as $e_{\mathfrak{uv}} \in \vec{E}(G)$ and $e_{\mathfrak{uv}}\neq e_{\mathfrak{vu}}$. A (primal) 1-form of $G$ is a function on oriented edges $\vec{E}(G)$ such that $\omega(e_{\mathfrak{uv}}) = - \omega(e_{\mathfrak{vu}})$. A dual 1-form of $G$ is a 1-form on the dual graph.
 
 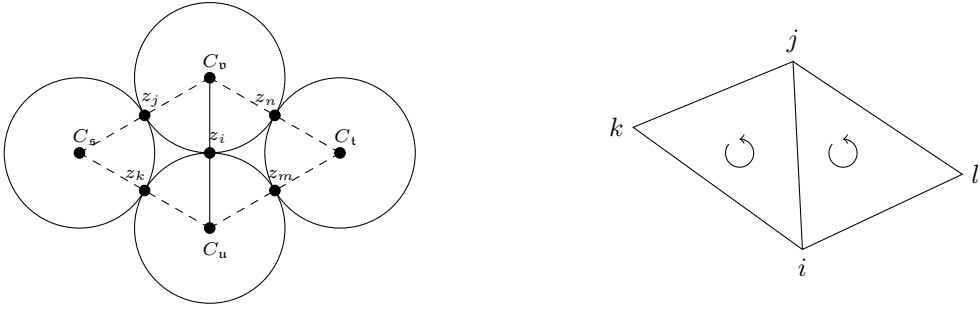
\begin{figure}[h!]
 	\centering

 	\begin{minipage}{0.45\textwidth}
 		\begin{tikzpicture}[line cap=round,line join=round,>=triangle 45,x=1.0cm,y=1.0cm]
 		\clip(-2.8,-2.1) rectangle (2.8,2.1);
 		\draw(0.,1.) circle (1.cm);
 		\draw(0.,-1.) circle (1.cm);
 		\draw[dash pattern=on 3pt off 3pt] (0.,1.)-- (-1.7320508075688772,0.);
 		\draw[dash pattern=on 3pt off 3pt] (-1.7320508075688772,0.)-- (0.,-1.);
 		\draw[dash pattern=on 3pt off 3pt] (0.,-1.)-- (1.7320508075688772,0.);
 		\draw[dash pattern=on 3pt off 3pt] (1.7320508075688772,0.)-- (0.,1.);
 		\draw(-1.7320508075688772,0.) circle (1.cm);
 		\draw(1.7320508075688772,0.) circle (1.cm);
 		\draw (0.,1.)-- (0.,0.);
 		\draw (0.,0.)-- (0.,-1.);
 		\begin{scriptsize}
 		\draw [fill=black] (0.,0.) circle (2.0pt);
 		\draw[color=black] (0.10306616478532194,0.20824815695886081) node {$z_i$};
 		\draw [fill=black] (0.,1.) circle (2.0pt);
 		\draw[color=black] (0.08306616478532194,1.213040216250935) node {$C_\mathfrak{v}$};
 		\draw [fill=black] (0.,-1.) circle (2.0pt);
 		\draw[color=black] (0.08306616478532194,-1.2965439023332133) node {$C_\mathfrak{u}$};
 		\draw [fill=black] (1.7320508075688772,0.) circle (2.0pt);
 		\draw[color=black] (1.8163324670641485,0.20824815695886081) node {$C_\mathfrak{t}$};
 		\draw [fill=black] (-1.7320508075688772,0.) circle (2.0pt);
 		\draw[color=black] (-1.6502001374935047,0.20824815695886081) node {$C_\mathfrak{s}$};
 		\draw [fill=black] (-0.8660254037844388,0.5) circle (2.0pt);
 		\draw[color=black] (-0.7835669863540914,0.7106441866048978) node {$z_j$};
 		\draw [fill=black] (-0.8660254037844386,-0.5) circle (2.0pt);
 		\draw[color=black] (-0.9835669863540914,-0.29414787268717624) node {$z_k$};
 		\draw [fill=black] (0.8660254037844388,-0.5) circle (2.0pt);
 		\draw[color=black] (0.9496993159247352,-0.29414787268717624) node {$z_m$};
 		\draw [fill=black] (0.8660254037844386,0.5) circle (2.0pt);
 		\draw[color=black] (0.7496993159247352,0.7106441866048978) node {$z_n$};
 		\end{scriptsize}
 		\end{tikzpicture}
 	\end{minipage}
  	\begin{minipage}{0.45\textwidth}
 	\centering
 	\begin{tikzpicture}[scale=1.25]
 	\coordinate [label=below:$i$] (i) at (0.1,-1);
 	\coordinate [label=above:$j$] (j) at (0,1);
 	\coordinate [label=left:$k$] (k)  at (-1.7,0.3);
 	\coordinate [label=right:$l$] (kt) at (1.8,-0.2);
 	\draw (i)--(j)--(kt)--(i);
 	\draw (j)--(k)--(i);
 	\draw [->] (0.8,-0.2) ++(140:5mm) arc (-220:90:1.5mm);
 	\draw [->] (-0.3,-0.2) ++(140:5mm) arc (-220:90:1.5mm);	
 	\end{tikzpicture}
 \end{minipage}
 	
 	\caption{Two neighboring triangles containing the edge $\mathfrak{uv}\in E(G)$ (left) and the edge $\{ij\} \in E(TMG)$ (right)}
 	\label{fig:orientation}
 \end{figure}
 
 \textbf{Circle patterns} are generalizations of circle packings where neighboring circles are allowed to intersect. Instead of the tangency of circles, two circle patterns have the same pattern structure if the corresponding intersection angles are the same. For a triangle mesh $z:V \to \mathbb{C}$ in the plane, a circle pattern is induced from the circumscribed circles of the faces. The intersection angles define a function on the interior edges. 
 
 A circle packing of the combinatorics of a triangle mesh induces an \textbf{orthogonal circle pattern} (Fig. \ref{fig:circlepacking}). For every three pairwise touching circles $C_{\mathfrak{u}}, C_{\mathfrak{v}}, C_{\mathfrak{s}}$, their tangency points $z_{i},z_{j},z_{k}$ lie on a circle that intersect the three circles orthogonally. These additional circles form a so-called dual circle packing. Combining a circle packing with its dual, we obtain an orthogonal circle pattern (Fig. \ref{fig:circlepacking} center). 
 
  \begin{figure}[h!]
 	\centering
 	\begin{minipage}{0.32 \textwidth}
 		\begin{tikzpicture}[line cap=round,line join=round,>=triangle 45,x=1.0cm,y=1.0cm,scale=0.7]
 		\clip(-3.,-2.8) rectangle (3.,2.8);
 		\draw(0.,0.) circle (1.cm);
 		\draw(2.,0.) circle (1.cm);
 		\draw(-2.,0.) circle (1.cm);
 		\draw(-1.,1.7320508075688772) circle (1.0000878721036468cm);
 		\draw(1.,1.7320508075688772) circle (1.0033111550690994cm);
 		\draw(3.,1.7320508075688772) circle (1.001878446163976cm);
 		\draw(-3.,1.7320508075688772) circle (1.000031327963911cm);
 		\draw(-1.,-1.7320508075688772) circle (1.000034097123565cm);
 		\draw(1.,-1.7320508075688772) circle (1.0009433592074812cm);
 		\draw(3.,-1.7320508075688772) circle (1.0034182825738462cm);
 		\draw(-3.,-1.7320508075688772) circle (1.0000040084275728cm);
 		\draw(-2.0064808217537324,3.4641016151377544) circle (1.0053548382577902cm);
 		\draw(0.0032482965229566887,3.4641016151377544) circle (0.9952221677733265cm);
 		\draw(1.997806592754283,3.4641016151377544) circle (0.9990427477938433cm);
 		\draw(-2.0055568446814505,-3.4641016151377544) circle (1.0027583052641529cm);
 		\draw(0.,-3.4641016151377544) circle (1.0000541647246564cm);
 		\draw(1.9835597477812021,-3.4641016151377544) circle (0.9918180278632107cm);
 		\begin{scriptsize}
 		\draw [fill=black] (1.,0.) circle (2.0pt);
 		\draw [fill=black] (-1.,0.) circle (2.0pt);
 		\draw [fill=black] (-3.,0.) circle (2.0pt);
 		\draw [fill=black] (3.,0.) circle (2.0pt);
 		\draw [fill=black] (-0.5080962528209986,0.8613002948271061) circle (2.0pt);
 		\draw [fill=black] (0.5490247793465535,0.8358060729998725) circle (2.0pt);
 		\draw [fill=black] (2.537073188919372,0.8435356481761612) circle (2.0pt);
 		\draw [fill=black] (-2.5048394513469736,0.8632132577548184) circle (2.0pt);
 		\draw [fill=black] (-0.5050484569517069,-0.8630909894852917) circle (2.0pt);
 		\draw [fill=black] (0.47316167858547153,-0.8809756102856531) circle (2.0pt);
 		\draw [fill=black] (2.448489301586677,-0.8937881999457673) circle (2.0pt);
 		\draw [fill=black] (-2.4982651225517767,-0.867024721474805) circle (2.0pt);
 		\draw [fill=black] (-1.5424626932845245,2.5722352982156043) circle (2.0pt);
 		\draw [fill=black] (0.48880317675156626,2.595364818683489) circle (2.0pt);
 		\draw [fill=black] (1.550871543308617,2.5706054072574562) circle (2.0pt);
 		\draw [fill=black] (-0.9940308515927064,3.4641016151377544) circle (2.0pt);
 		\draw [fill=black] (-0.4803706777031921,2.59428562827853) circle (2.0pt);
 		\draw [fill=black] (-0.0016143804893224978,1.6737249412394237) circle (2.0pt);
 		\draw [fill=black] (-2.000028273754884,1.7211327665528937) circle (2.0pt);
 		\draw [fill=black] (2.000718213249537,1.659965222624701) circle (2.0pt);
 		\draw [fill=black] (-2.0000150446346328,-1.7382238063650053) circle (2.0pt);
 		\draw [fill=black] (-4.548532329533561E-4,-1.7633189220657002) circle (2.0pt);
 		\draw [fill=black] (1.9987598396345276,-1.798129480942134) circle (2.0pt);
 		\draw [fill=black] (-1.4996165869778435,-2.598336823516471) circle (2.0pt);
 		\draw [fill=black] (-0.4918688111991666,-2.5933700807641284) circle (2.0pt);
 		\draw [fill=black] (1.4675578962670535,-2.6170803110205325) circle (2.0pt);
 		\draw [fill=black] (0.9992679650524392,3.4323680224197624) circle (2.0pt);
 		\draw [fill=black] (-0.9980613301393708,-3.5272040213007516) circle (2.0pt);
 		\draw [fill=black] (0.4724219809560179,-2.5826676673613367) circle (2.0pt);
 		\draw [fill=black] (0.9959151997881479,-3.5549930185489833) circle (2.0pt);
 		\draw [fill=black] (-1.508140190803178,0.8706744099242065) circle (2.0pt);
 		\draw [fill=black] (-1.505065505804085,-0.868930288604922) circle (2.0pt);
 		\draw [fill=black] (1.5268383214145342,-0.8809756102856563) circle (2.0pt);
 		\draw [fill=black] (1.5506830978180768,0.893372442721198) circle (2.0pt);
 		\end{scriptsize}
 		\end{tikzpicture}
 	\end{minipage}
 	\begin{minipage}{0.32\textwidth}
 		\definecolor{xdxdff}{rgb}{0.49019607843137253,0.49019607843137253,1.}
 		\definecolor{uuuuuu}{rgb}{0.26666666666666666,0.26666666666666666,0.26666666666666666}
 		\begin{tikzpicture}[line cap=round,line join=round,>=triangle 45,x=1.0cm,y=1.0cm,scale=0.7]
 		\clip(-3.,-2.8) rectangle (3.,2.8);
 		\draw(0.,0.) circle (1.cm);
 		\draw(2.,0.) circle (1.cm);
 		\draw(-2.,0.) circle (1.cm);
 		\draw(-1.,1.7320508075688772) circle (1.0000878721036468cm);
 		\draw(1.,1.7320508075688772) circle (1.0033111550690994cm);
 		\draw(3.,1.7320508075688772) circle (1.001878446163976cm);
 		\draw(-3.,1.7320508075688772) circle (1.000031327963911cm);
 		\draw(-1.,-1.7320508075688772) circle (1.000034097123565cm);
 		\draw(1.,-1.7320508075688772) circle (1.0009433592074812cm);
 		\draw(3.,-1.7320508075688772) circle (1.0034182825738462cm);
 		\draw(-3.,-1.7320508075688772) circle (1.0000040084275728cm);
 		\draw(-2.0064808217537324,3.4641016151377544) circle (1.0053548382577902cm);
 		\draw(0.0032482965229566887,3.4641016151377544) circle (0.9952221677733265cm);
 		\draw(1.997806592754283,3.4641016151377544) circle (0.9990427477938433cm);
 		\draw(-2.0055568446814505,-3.4641016151377544) circle (1.0027583052641529cm);
 		\draw(0.,-3.4641016151377544) circle (1.0000541647246564cm);
 		\draw(1.9835597477812021,-3.4641016151377544) circle (0.9918180278632107cm);
 		\draw [dash pattern=on 2pt off 2pt] (-1.0172414183184653,2.863818722840238) circle (0.6007314551384857cm);
 		\draw [dash pattern=on 2pt off 2pt] (0.004587163006343346,2.2617239954505273) circle (0.5880317567061172cm);
 		\draw [dash pattern=on 2pt off 2pt] (1.0258751724803175,2.8419803490141895) circle (0.590986927432711cm);
 		\draw [dash pattern=on 2pt off 2pt] (-2.083360360941715,2.314481706281469) circle (0.5991720963395554cm);
 		\draw [dash pattern=on 2pt off 2pt] (-2.0085989887447506,1.1486973497848278) circle (0.572499575131604cm);
 		\draw [dash pattern=on 2pt off 2pt] (-1.0108242957494513,0.5772995339779164) circle (0.5774010021723125cm);
 		\draw [dash pattern=on 2pt off 2pt] (0.026340438875311015,1.0922095257628304) circle (0.582186954820034cm);
 		\draw [dash pattern=on 2pt off 2pt] (2.056183608801216,1.1120088181784948) circle (0.550756417372797cm);
 		\draw [dash pattern=on 2pt off 2pt] (2.0494818680297278,2.250483808290168) circle (0.5925285596862527cm);
 		\draw [dash pattern=on 2pt off 2pt] (1.0664719181097535,0.5754354079577545) circle (0.5792619654600992cm);
 		\draw [dash pattern=on 2pt off 2pt] (1.,-0.5980169204045331) circle (0.5980169204045331cm);
 		\draw [dash pattern=on 2pt off 2pt] (1.9829526282810157,-1.226272539038344) circle (0.5720753708510177cm);
 		\draw [dash pattern=on 2pt off 2pt] (2.92950881170672,-0.5735520495312157) circle (0.5778675982856772cm);
 		\draw [dash pattern=on 2pt off 2pt] (3.005468969049834,0.5517947516375288) circle (0.5518218530986149cm);
 		\draw [dash pattern=on 2pt off 2pt] (-0.02164092242999822,-1.1836633216719006) circle (0.5800426403256216cm);
 		\draw [dash pattern=on 2pt off 2pt] (-1.0067426418371948,-0.5773305828705169) circle (0.5773699551730723cm);
 		\draw [dash pattern=on 2pt off 2pt] (-2.0022283022364014,-1.1614122075330537) circle (0.5768158450114591cm);
 		\draw [dash pattern=on 2pt off 2pt] (-0.9941193434915839,-2.9252308879504167) circle (0.6019860401490646cm);
 		\draw [dash pattern=on 2pt off 2pt] (-0.012736500333227648,-2.316538954886902) circle (0.5533563441130747cm);
 		\draw [dash pattern=on 2pt off 2pt] (0.9579392115078053,-2.948353794035274) circle (0.6078267223514151cm);
 		\draw [dash pattern=on 2pt off 2pt] (-3.047513494828275,0.6008797387678011) circle (0.602755333989221cm);
 		\draw [dash pattern=on 2pt off 2pt] (-3.02649170860371,-0.5940161553200489) circle (0.5946065954948334cm);
 		\draw [dash pattern=on 2pt off 2pt] (-1.9573935131823972,-2.289045327839835) circle (0.5524680474590185cm);
 		\draw [dash pattern=on 2pt off 2pt] (2.0067577136567487,-2.3850712751633285) circle (0.5869962826053248cm);
 		\begin{scriptsize}
 		\draw [fill=black] (1.,0.) circle (2.0pt);
 		\draw [fill=black] (-1.,0.) circle (2.0pt);
 		\draw [fill=black] (-3.,0.) circle (2.0pt);
 		\draw [fill=black] (3.,0.) circle (2.0pt);
 		\draw [fill=black] (-0.5080962528209986,0.8613002948271061) circle (2.0pt);
 		\draw [fill=black] (0.5490247793465535,0.8358060729998725) circle (2.0pt);
 		\draw [fill=black] (2.537073188919372,0.8435356481761612) circle (2.0pt);
 		\draw [fill=black] (-2.5048394513469736,0.8632132577548184) circle (2.0pt);
 		\draw [fill=black] (-0.5050484569517069,-0.8630909894852917) circle (2.0pt);
 		\draw [fill=black] (0.47316167858547153,-0.8809756102856531) circle (2.0pt);
 		\draw [fill=black] (2.448489301586677,-0.8937881999457673) circle (2.0pt);
 		\draw [fill=black] (-2.4982651225517767,-0.867024721474805) circle (2.0pt);
 		\draw [fill=black] (-1.5424626932845245,2.5722352982156043) circle (2.0pt);
 		\draw [fill=black] (0.48880317675156626,2.595364818683489) circle (2.0pt);
 		\draw [fill=black] (1.550871543308617,2.5706054072574562) circle (2.0pt);
 		\draw [fill=black] (-0.9940308515927064,3.4641016151377544) circle (2.0pt);
 		\draw [fill=black] (-0.4803706777031921,2.59428562827853) circle (2.0pt);
 		\draw [fill=black] (-0.0016143804893224978,1.6737249412394237) circle (2.0pt);
 		\draw [fill=black] (-2.000028273754884,1.7211327665528937) circle (2.0pt);
 		\draw [fill=black] (2.000718213249537,1.659965222624701) circle (2.0pt);
 		\draw [fill=black] (-2.0000150446346328,-1.7382238063650053) circle (2.0pt);
 		\draw [fill=black] (-4.548532329533561E-4,-1.7633189220657002) circle (2.0pt);
 		\draw [fill=black] (1.9987598396345276,-1.798129480942134) circle (2.0pt);
 		\draw [fill=black] (-1.4996165869778435,-2.598336823516471) circle (2.0pt);
 		\draw [fill=black] (-0.4918688111991666,-2.5933700807641284) circle (2.0pt);
 		\draw [fill=black] (1.4675578962670535,-2.6170803110205325) circle (2.0pt);
 		\draw [fill=black] (0.9992679650524392,3.4323680224197624) circle (2.0pt);
 		\draw [fill=black] (-0.9980613301393708,-3.5272040213007516) circle (2.0pt);
 		\draw [fill=black] (0.4724219809560179,-2.5826676673613367) circle (2.0pt);
 		\draw [fill=black] (0.9959151997881479,-3.5549930185489833) circle (2.0pt);
 		\draw [fill=black] (-1.508140190803178,0.8706744099242065) circle (2.0pt);
 		\draw [fill=black] (-1.505065505804085,-0.868930288604922) circle (2.0pt);
 		\draw [fill=black] (1.5268383214145342,-0.8809756102856563) circle (2.0pt);
 		\draw [fill=black] (1.5506830978180768,0.893372442721198) circle (2.0pt);
 		\draw [fill=black] (-2.438946886302586,-2.5598354537671817) circle (2.0pt);
 		\draw [fill=black] (2.535488052485511,-2.6400396351503894) circle (2.0pt);
 		\draw [fill=black] (-3.54144256513851,-0.8913082991113571) circle (2.0pt);
 		\draw [fill=black] (-2.585121176187902,2.6419619122717233) circle (1.0pt);
 		\draw [fill=black] (2.519075568865809,2.611831079179181) circle (2.0pt);
 		\draw [fill=black] (3.471141005751571,0.8478638513275284) circle (2.0pt);
 		\draw [fill=black] (3.4458067208149203,-0.8331044145733355) circle (2.0pt);
 		\draw [fill=black] (-3.5662768084215806,0.9077976709942375) circle (2.0pt);
 		
 		\end{scriptsize}
 		\end{tikzpicture}
 	\end{minipage}
 	\begin{minipage}{0.32\textwidth}
 		\begin{tikzpicture}[line cap=round,line join=round,>=triangle 45,x=1.0cm,y=1.0cm,scale=0.7]
 		\clip(-3.,-2.8) rectangle (3.,2.8);
 		\draw [dash pattern=on 2pt off 2pt] (0.,0.) circle (1.cm);
 		\draw [dash pattern=on 2pt off 2pt] (2.,0.) circle (1.cm);
 		\draw [dash pattern=on 2pt off 2pt] (-2.,0.) circle (1.cm);
 		\draw [dash pattern=on 2pt off 2pt] (-1.,1.7320508075688772) circle (1.0000878721036468cm);
 		\draw [dash pattern=on 2pt off 2pt] (1.,1.7320508075688772) circle (1.0033111550690994cm);
 		\draw [dash pattern=on 2pt off 2pt] (3.,1.7320508075688772) circle (1.001878446163976cm);
 		\draw [dash pattern=on 2pt off 2pt] (-3.,1.7320508075688772) circle (1.000031327963911cm);
 		\draw [dash pattern=on 2pt off 2pt] (-1.,-1.7320508075688772) circle (1.000034097123565cm);
 		\draw [dash pattern=on 2pt off 2pt] (1.,-1.7320508075688772) circle (1.0009433592074812cm);
 		\draw [dash pattern=on 2pt off 2pt] (3.,-1.7320508075688772) circle (1.0034182825738462cm);
 		\draw [dash pattern=on 2pt off 2pt] (-3.,-1.7320508075688772) circle (1.0000040084275728cm);
 		\draw [dash pattern=on 2pt off 2pt] (-2.0064808217537324,3.4641016151377544) circle (1.0053548382577902cm);
 		\draw [dash pattern=on 2pt off 2pt] (0.0032482965229566887,3.4641016151377544) circle (0.9952221677733265cm);
 		\draw [dash pattern=on 2pt off 2pt] (1.997806592754283,3.4641016151377544) circle (0.9990427477938433cm);
 		\draw [dash pattern=on 2pt off 2pt] (-2.0055568446814505,-3.4641016151377544) circle (1.0027583052641529cm);
 		\draw [dash pattern=on 2pt off 2pt] (0.,-3.4641016151377544) circle (1.0000541647246564cm);
 		\draw [dash pattern=on 2pt off 2pt] (1.9835597477812021,-3.4641016151377544) circle (0.9918180278632107cm);
 		\draw [dash pattern=on 2pt off 2pt] (-1.0172414183184653,2.863818722840238) circle (0.6007314551384857cm);
 		\draw [dash pattern=on 2pt off 2pt] (0.004587163006343346,2.2617239954505273) circle (0.5880317567061172cm);
 		\draw [dash pattern=on 2pt off 2pt] (1.0258751724803175,2.8419803490141895) circle (0.590986927432711cm);
 		\draw [dash pattern=on 2pt off 2pt] (-2.083360360941715,2.314481706281469) circle (0.5991720963395554cm);
 		\draw [dash pattern=on 2pt off 2pt] (-2.0085989887447506,1.1486973497848278) circle (0.572499575131604cm);
 		\draw [dash pattern=on 2pt off 2pt] (-1.0108242957494513,0.5772995339779164) circle (0.5774010021723125cm);
 		\draw [dash pattern=on 2pt off 2pt] (0.026340438875311015,1.0922095257628304) circle (0.582186954820034cm);
 		\draw [dash pattern=on 2pt off 2pt] (2.056183608801216,1.1120088181784948) circle (0.550756417372797cm);
 		\draw [dash pattern=on 2pt off 2pt] (2.0494818680297278,2.250483808290168) circle (0.5925285596862527cm);
 		\draw [dash pattern=on 2pt off 2pt] (1.0664719181097535,0.5754354079577545) circle (0.5792619654600992cm);
 		\draw [dash pattern=on 2pt off 2pt] (1.,-0.5980169204045331) circle (0.5980169204045331cm);
 		\draw [dash pattern=on 2pt off 2pt] (1.9829526282810157,-1.226272539038344) circle (0.5720753708510177cm);
 		\draw [dash pattern=on 2pt off 2pt] (2.9295088117067194,-0.5735520495312157) circle (0.5778675982856772cm);
 		\draw [dash pattern=on 2pt off 2pt] (3.0054689690498346,0.5517947516375288) circle (0.5518218530986152cm);
 		\draw [dash pattern=on 2pt off 2pt] (-0.02164092242999822,-1.1836633216719006) circle (0.5800426403256216cm);
 		\draw [dash pattern=on 2pt off 2pt] (-1.0067426418371948,-0.5773305828705169) circle (0.5773699551730723cm);
 		\draw [dash pattern=on 2pt off 2pt] (-2.0022283022364014,-1.1614122075330537) circle (0.5768158450114591cm);
 		\draw [dash pattern=on 2pt off 2pt] (-0.9941193434915839,-2.9252308879504167) circle (0.6019860401490646cm);
 		\draw [dash pattern=on 2pt off 2pt] (-0.012736500333227648,-2.316538954886902) circle (0.5533563441130747cm);
 		\draw [dash pattern=on 2pt off 2pt] (0.9579392115078053,-2.948353794035274) circle (0.6078267223514151cm);
 		\draw [dash pattern=on 2pt off 2pt] (-3.047513494828275,0.6008797387678011) circle (0.602755333989221cm);
 		\draw [dash pattern=on 2pt off 2pt] (-3.02649170860371,-0.5940161553200489) circle (0.5946065954948334cm);
 		\draw [dash pattern=on 2pt off 2pt] (-1.9573935131823972,-2.289045327839835) circle (0.5524680474590185cm);
 		\draw [dash pattern=on 2pt off 2pt] (2.0067577136567487,-2.3850712751633285) circle (0.5869962826053248cm);
 		\draw (-1.5424626932845245,2.5722352982156043)-- (-2.000028273754884,1.7211327665528937);
 		\draw (-1.508140190803178,0.8706744099242065)-- (-2.000028273754884,1.7211327665528937);
 		\draw (-2.000028273754884,1.7211327665528937)-- (-2.5048394513469736,0.8632132577548184);
 		\draw (-2.5048394513469736,0.8632132577548184)-- (-1.508140190803178,0.8706744099242065);
 		\draw (-1.508140190803178,0.8706744099242065)-- (-0.5080962528209986,0.8613002948271061);
 		\draw (-0.5080962528209986,0.8613002948271061)-- (-0.0016143804893224978,1.6737249412394237);
 		\draw (-0.0016143804893224978,1.6737249412394237)-- (-0.4803706777031921,2.59428562827853);
 		\draw (-0.4803706777031921,2.59428562827853)-- (-1.5424626932845245,2.5722352982156043);
 		\draw (-1.5424626932845245,2.5722352982156043)-- (-0.9940308515927064,3.4641016151377544);
 		\draw (-0.9940308515927064,3.4641016151377544)-- (-0.4803706777031921,2.59428562827853);
 		\draw (-0.4803706777031921,2.59428562827853)-- (0.48880317675156626,2.595364818683489);
 		\draw (0.48880317675156626,2.595364818683489)-- (-0.0016143804893224978,1.6737249412394237);
 		\draw (-0.0016143804893224978,1.6737249412394237)-- (0.5490247793465535,0.8358060729998725);
 		\draw (0.5490247793465535,0.8358060729998725)-- (1.5506830978180768,0.893372442721198);
 		\draw (1.5506830978180768,0.893372442721198)-- (2.000718213249537,1.659965222624701);
 		\draw (2.000718213249537,1.659965222624701)-- (1.550871543308617,2.5706054072574562);
 		\draw (1.550871543308617,2.5706054072574562)-- (0.9992679650524392,3.4323680224197624);
 		\draw (0.9992679650524392,3.4323680224197624)-- (0.48880317675156626,2.595364818683489);
 		\draw (0.48880317675156626,2.595364818683489)-- (1.550871543308617,2.5706054072574562);
 		\draw (1.5506830978180768,0.893372442721198)-- (2.537073188919372,0.8435356481761612);
 		\draw (2.537073188919372,0.8435356481761612)-- (2.000718213249537,1.659965222624701);
 		\draw (2.537073188919372,0.8435356481761612)-- (3.,0.);
 		\draw (3.,0.)-- (2.448489301586677,-0.8937881999457673);
 		\draw (2.448489301586677,-0.8937881999457673)-- (1.5268383214145342,-0.8809756102856563);
 		\draw (1.5268383214145342,-0.8809756102856563)-- (1.,0.);
 		\draw (1.,0.)-- (1.5506830978180768,0.893372442721198);
 		\draw (0.5490247793465535,0.8358060729998725)-- (1.,0.);
 		\draw (1.,0.)-- (0.47316167858547153,-0.8809756102856531);
 		\draw (0.47316167858547153,-0.8809756102856531)-- (-0.5050484569517069,-0.8630909894852917);
 		\draw (-0.5050484569517069,-0.8630909894852917)-- (-1.,0.);
 		\draw (-1.,0.)-- (-0.5080962528209986,0.8613002948271061);
 		\draw (-0.5080962528209986,0.8613002948271061)-- (0.5490247793465535,0.8358060729998725);
 		\draw (-1.508140190803178,0.8706744099242065)-- (-1.,0.);
 		\draw (-1.,0.)-- (-1.505065505804085,-0.868930288604922);
 		\draw (-1.505065505804085,-0.868930288604922)-- (-2.4982651225517767,-0.867024721474805);
 		\draw (-2.4982651225517767,-0.867024721474805)-- (-3.,0.);
 		\draw (-3.,0.)-- (-2.5048394513469736,0.8632132577548184);
 		\draw (-2.5048394513469736,0.8632132577548184)-- (-3.5662768084215806,0.9077976709942375);
 		\draw (-2.000028273754884,1.7211327665528937)-- (-2.585121176187902,2.6419619122717233);
 		\draw (-2.585121176187902,2.6419619122717233)-- (-1.5424626932845245,2.5722352982156043);
 		\draw (-3.,0.)-- (-3.54144256513851,-0.8913082991113571);
 		\draw (-3.54144256513851,-0.8913082991113571)-- (-2.4982651225517767,-0.867024721474805);
 		\draw (-2.4982651225517767,-0.867024721474805)-- (-2.0000150446346328,-1.7382238063650053);
 		\draw (-2.0000150446346328,-1.7382238063650053)-- (-1.505065505804085,-0.868930288604922);
 		\draw (-1.505065505804085,-0.868930288604922)-- (-0.5050484569517069,-0.8630909894852917);
 		\draw (-0.5050484569517069,-0.8630909894852917)-- (-4.548532329533561E-4,-1.7633189220657002);
 		\draw (-4.548532329533561E-4,-1.7633189220657002)-- (0.47316167858547153,-0.8809756102856531);
 		\draw (0.47316167858547153,-0.8809756102856531)-- (1.5268383214145342,-0.8809756102856563);
 		\draw (1.5268383214145342,-0.8809756102856563)-- (1.9987598396345276,-1.798129480942134);
 		\draw (1.9987598396345276,-1.798129480942134)-- (2.448489301586677,-0.8937881999457673);
 		\draw (2.448489301586677,-0.8937881999457673)-- (3.4458067208149203,-0.8331044145733354);
 		\draw (1.9987598396345276,-1.798129480942134)-- (2.535488052485511,-2.6400396351503894);
 		\draw (2.535488052485511,-2.6400396351503894)-- (1.4675578962670535,-2.6170803110205325);
 		\draw (1.4675578962670535,-2.6170803110205325)-- (1.9987598396345276,-1.798129480942134);
 		\draw (-4.548532329533561E-4,-1.7633189220657002)-- (0.4724219809560179,-2.5826676673613367);
 		\draw (0.4724219809560179,-2.5826676673613367)-- (1.4675578962670535,-2.6170803110205325);
 		\draw (1.4675578962670535,-2.6170803110205325)-- (0.9959151997881479,-3.5549930185489833);
 		\draw (0.9959151997881479,-3.5549930185489833)-- (0.4724219809560179,-2.5826676673613367);
 		\draw (0.4724219809560179,-2.5826676673613367)-- (-0.4918688111991666,-2.5933700807641284);
 		\draw (-0.4918688111991666,-2.5933700807641284)-- (-4.548532329533561E-4,-1.7633189220657002);
 		\draw (-2.0000150446346328,-1.7382238063650053)-- (-1.4996165869778435,-2.598336823516471);
 		\draw (-1.4996165869778435,-2.598336823516471)-- (-0.4918688111991666,-2.5933700807641284);
 		\draw (-0.4918688111991666,-2.5933700807641284)-- (-0.9980613301393708,-3.5272040213007516);
 		\draw (-0.9980613301393708,-3.5272040213007516)-- (-1.4996165869778435,-2.598336823516471);
 		\draw (-1.4996165869778435,-2.598336823516471)-- (-2.438946886302586,-2.5598354537671817);
 		\draw (-2.438946886302586,-2.5598354537671817)-- (-2.0000150446346328,-1.7382238063650053);
 		\draw (-3.5662768084215806,0.9077976709942375)-- (-3.,0.);
 		\draw (3.,0.)-- (3.471141005751571,0.8478638513275283);
 		\draw (3.471141005751571,0.8478638513275283)-- (2.537073188919372,0.8435356481761612);
 		\draw (1.550871543308617,2.5706054072574562)-- (2.519075568865809,2.611831079179181);
 		\draw (2.519075568865809,2.611831079179181)-- (2.000718213249537,1.659965222624701);
 		\draw (3.,0.)-- (3.4458067208149203,-0.8331044145733354);
 		\begin{scriptsize}
 		\draw [fill=black] (1.,0.) circle (2.0pt);
 		\draw [fill=black] (-1.,0.) circle (2.0pt);
 		\draw [fill=black] (-3.,0.) circle (2.0pt);
 		\draw [fill=black] (3.,0.) circle (2.0pt);
 		\draw [fill=black] (-0.5080962528209986,0.8613002948271061) circle (2.0pt);
 		\draw [fill=black] (0.5490247793465535,0.8358060729998725) circle (2.0pt);
 		\draw [fill=black] (2.537073188919372,0.8435356481761612) circle (2.0pt);
 		\draw [fill=black] (-2.5048394513469736,0.8632132577548184) circle (2.0pt);
 		\draw [fill=black] (-0.5050484569517069,-0.8630909894852917) circle (2.0pt);
 		\draw [fill=black] (0.47316167858547153,-0.8809756102856531) circle (2.0pt);
 		\draw [fill=black] (2.448489301586677,-0.8937881999457673) circle (2.0pt);
 		\draw [fill=black] (-2.4982651225517767,-0.867024721474805) circle (2.0pt);
 		\draw [fill=black] (-1.5424626932845245,2.5722352982156043) circle (2.0pt);
 		\draw [fill=black] (0.48880317675156626,2.595364818683489) circle (2.0pt);
 		\draw [fill=black] (1.550871543308617,2.5706054072574562) circle (2.0pt);
 		\draw [fill=black] (-0.9940308515927064,3.4641016151377544) circle (2.0pt);
 		\draw [fill=black] (-0.4803706777031921,2.59428562827853) circle (2.0pt);
 		\draw [fill=black] (-0.0016143804893224978,1.6737249412394237) circle (2.0pt);
 		\draw [fill=black] (-2.000028273754884,1.7211327665528937) circle (2.0pt);
 		\draw [fill=black] (2.000718213249537,1.659965222624701) circle (2.0pt);
 		\draw [fill=black] (-2.0000150446346328,-1.7382238063650053) circle (2.0pt);
 		\draw [fill=black] (-4.548532329533561E-4,-1.7633189220657002) circle (2.0pt);
 		\draw [fill=black] (1.9987598396345276,-1.798129480942134) circle (2.0pt);
 		\draw [fill=black] (-1.4996165869778435,-2.598336823516471) circle (2.0pt);
 		\draw [fill=black] (-0.4918688111991666,-2.5933700807641284) circle (2.0pt);
 		\draw [fill=black] (1.4675578962670535,-2.6170803110205325) circle (2.0pt);
 		\draw [fill=black] (0.9992679650524392,3.4323680224197624) circle (2.0pt);
 		\draw [fill=black] (-0.9980613301393708,-3.5272040213007516) circle (2.0pt);
 		\draw [fill=black] (0.4724219809560179,-2.5826676673613367) circle (2.0pt);
 		\draw [fill=black] (0.9959151997881479,-3.5549930185489833) circle (2.0pt);
 		\draw [fill=black] (-1.508140190803178,0.8706744099242065) circle (2.0pt);
 		\draw [fill=black] (-1.505065505804085,-0.868930288604922) circle (2.0pt);
 		\draw [fill=black] (1.5268383214145342,-0.8809756102856563) circle (2.0pt);
 		\draw [fill=black] (1.5506830978180768,0.893372442721198) circle (2.0pt);
 		\draw [fill=black] (-2.585121176187902,2.6419619122717233) circle (2.0pt);
 		\draw [fill=black] (2.519075568865809,2.611831079179181) circle (2.0pt);
 		\draw [fill=black] (3.471141005751571,0.8478638513275283) circle (2.0pt);
 		\draw [fill=black] (3.4458067208149203,-0.8331044145733354) circle (2.0pt);
 		\draw [fill=black] (2.535488052485511,-2.6400396351503894) circle (2.0pt);
 		\draw [fill=black] (-2.438946886302586,-2.5598354537671817) circle (2.0pt);
 		\draw [fill=black] (-3.54144256513851,-0.8913082991113571) circle (2.0pt);
 		\draw [fill=black] (-3.5662768084215806,0.9077976709942375) circle (2.0pt);
 		\end{scriptsize}
 		\end{tikzpicture}
 	\end{minipage}
 	\caption{A circle packing with the combinatorics of a triangle mesh (left) induces a dual circle packing (center). The two together form an orthogonal circle pattern. The tangency points become the vertices of the medial graph of the given triangle mesh. On the right, the medial graph consists of hexagonal faces and triangular faces.}
 	\label{fig:circlepacking}
 \end{figure}
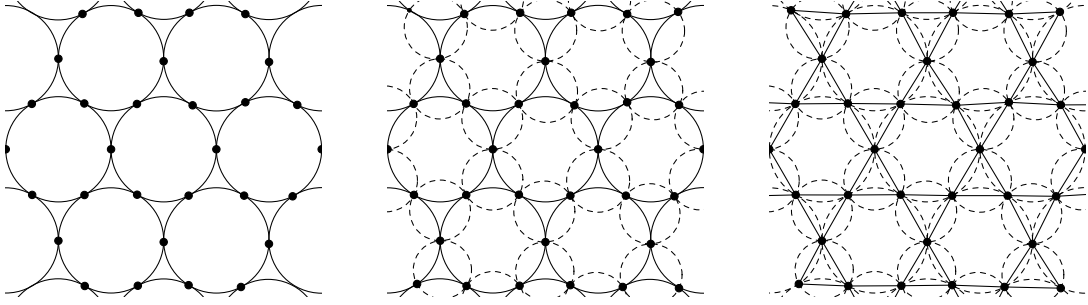
 
 The tangency points of a circle packing form a realization of the \textbf{medial graph} (Fig. \ref{fig:circlepacking} right) with vertices as the intersection points of the induced orthogonal circle pattern. Combinatorially, the medial graph $MG$ has a vertex for every edge of $G$. Two vertices of $MG$ are connected by an edge if their corresponding edges of $G$ occur consecutively in a face of $G$. Each face of $MG$ corresponds either to a face of $G$ or an interior vertex of $G$. We denote $TMG$ as a triangulation of the medial graph $MG$. 
 
 \textbf{M\"{o}bius transformations} in $\mathbb{C}\cup \{\infty\}$ are generated by Euclidean motions and inversions. They are conformal and map circles to circles. Trivial deformations of circle packings are generated by M\"{o}bius transformations. Since we are interested in continuous deformations of a circle packing, without further notice we focus on orientation-preserving M\"{o}bius transformations, which are in the form $z \mapsto \frac{a z + b}{c z +d}$ for some $a,b,c,d \in \mathbb{C}$ such that $ad - bc =1$.
 
In \cite{Lam2015a}, \textbf{discrete holomorphic quadratic differentials} were introduced to parametrize infinitesimal deformations of circle patterns preserving the intersection angles (see Proposition \ref{prop:infmobgen}).
 
 \begin{definition} [Lam-Pinkall \cite{Lam2015a}] \label{def:hologen}
 	Let $z:V \to \mathbb{C}$ be a realization of a triangle mesh with $z_i \neq z_j$ for all $\{ij\} \in E$. A function  $q:E_{int} \to \mathbb{R}$ is called a holomorphic quadratic differential of general type if it satisfies for every interior vertex $i \in V_{int}$
 	\begin{align*} \label{eq:qgeneral}
 	\sum_j q_{ij}&=0 \\
 	\sum_j \frac{q_{ij}}{z_j -z_i} &=0
 	\end{align*}
 	where $q_{ij}= q_{ji}$ and the sum is taken over all the edges connecting to $i$.
 \end{definition}
 
 In the case of a circle packing with the combinatorics of a triangle mesh $G$, an orthogonal circle pattern is induced. We will consider  tangency points of the circles as the realization $z:V(TMG)\to \mathbb{C}$ and holomorphic quadratic differentials $q:E(TMG) \to \mathbb{R}$ defined on a triangulation of the medial graph.  
 
 \textbf{Discrete minimal surfaces of general type} in Definition \ref{def:genmin} can be constructed from holomorphic quadratic differentials via a Weierstrass representation formula.
 
 \begin{proposition}[Lam \cite{Lam2016}] \label{thm:Weierstrass}
 	Suppose $z:V \to \mathbb{C}$ is a realization of a triangular mesh and $q:E_{int} \to \mathbb{R}$ is a holomorphic quadratic differential. Then there exists $\hat{\mathcal{F}}: F \to \mathbb{C}^3$ such that for every edge $\{ij\} \in E_{int}$
 	\begin{equation} \label{eq:eta}
 	\hat{\mathcal{F}}_{\text{left}(e_{ij})}- \hat{\mathcal{F}}_{\text{right}(e_{ij})} =\frac{q_{ij}}{z_j - z_i} \left( \begin{array}{c}
 	1-z_i z_j \\ \mathbf{i}(1+z_i z_j) \\ z_i + z_j
 	\end{array}\right)
 	\end{equation}
 	where $\text{left}(e_{ij}), \text{right}(e_{ij})$ denote the left and the right face of $e_{ij}$. We consider  $N:V \to \mathbb{S}^2$ the stereographic projection of $z$
 	\begin{equation*}
 	N := \frac{1}{1+|z|^2} \left(\begin{array}{c}2\Re z \\ 2\Im z \\ |z|^2-1\end{array}\right)		
 	\end{equation*}
 	and assume $N_i \neq -N_j$ for $\{ij\} \in E$. Then we have
 	\begin{enumerate}
 			\item $\Im(\hat{\mathcal{F}}):F \to \mathbb{R}^3$ is a discrete minimal surface of general type.
 		\item $\Re(\hat{\mathcal{F}}):F \to \mathbb{R}^3$ is a reciprocal parallel mesh of $N$.
 	\end{enumerate}
 	
 	The converse also holds: given a reciprocal parallel mesh of $N$ or a polyhedral surface with $H \equiv 0$ and face normal $N$, there exists a holomorphic quadratic differential $q$ on $z$ satisfying Eq. \eqref{eq:eta}.
 \end{proposition}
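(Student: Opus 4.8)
The plan is to read the right-hand side of \eqref{eq:eta} as a $\mathbb{C}^3$-valued dual 1-form
\[
\eta(e_{ij}) := \frac{q_{ij}}{z_j - z_i}
\begin{pmatrix} 1 - z_i z_j \\ \mathbf{i}(1+z_i z_j) \\ z_i + z_j \end{pmatrix},
\]
and to produce $\hat{\mathcal{F}}$ as its primitive on faces. First I would check that $\eta$ is a genuine dual 1-form, i.e. $\eta(e_{ji}) = -\eta(e_{ij})$, which is immediate from $q_{ij}=q_{ji}$ and the symmetry of the vector. Since $G$ is a triangulated disk and hence simply connected, a function $\hat{\mathcal{F}}:F\to\mathbb{C}^3$ realizing these face-differences exists if and only if $\eta$ is closed, that is $\sum_{j\sim i}\eta(e_{ij})=0$ around every interior vertex $i$. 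I would establish this by the termwise splittings $z_i+z_j = (z_j-z_i)+2z_i$, $\,1-z_iz_j=(1-z_i^2)-z_i(z_j-z_i)$ and $1+z_iz_j=(1+z_i^2)+z_i(z_j-z_i)$: each component then collapses into a combination of $\sum_j q_{ij}$ and $\sum_j q_{ij}/(z_j-z_i)$, both of which vanish by Definition \ref{def:hologen}. Integrating $\eta$ then yields $\hat{\mathcal{F}}$, unique up to an additive constant.

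For the two assertions I would first record three identities for the Weierstrass vector $W(z_i,z_j):=\tfrac{1}{z_j-z_i}(1-z_iz_j,\ \mathbf{i}(1+z_iz_j),\ z_i+z_j)^\top$, computed with the complex-bilinear product $\langle\cdot,\cdot\rangle$ on $\mathbb{C}^3$: namely $\langle W,W\rangle=1$, and, after substituting the stereographic formula for $N$ and factoring the numerator as $(z_i-z_j)(1+|z_i|^2)$, the values $\langle W,N_i\rangle=-1$ and $\langle W,N_j\rangle=1$. Assertion (2) follows from the real part: a direct computation shows componentwise that $\Re W=\lambda_{ij}^{-1}(N_j-N_i)$ with $\lambda_{ij}=2|z_j-z_i|^2/\big((1+|z_i|^2)(1+|z_j|^2)\big)>0$, so each dual edge $\Re\hat{\mathcal{F}}_{\mathrm{left}(e_{ij})}-\Re\hat{\mathcal{F}}_{\mathrm{right}(e_{ij})}$ is a real multiple of the primal edge $N_j-N_i$, which is exactly the reciprocal-parallel condition.

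Assertion (1) is where the work lies, and I would route it through the same identities. Since $\langle\Im W,N_i\rangle=\langle\Im W,N_j\rangle=0$, every edge of $\Im\hat{\mathcal{F}}$ bounding the face dual to an interior vertex $i$ is orthogonal to $N_i$; hence that face is planar with unit normal $N_i$, and the dihedral angle $\alpha_{ij}$ across the edge dual to $\{ij\}$ is the angle between $N_i$ and $N_j$. From $\langle W,W\rangle=1$ I get $|\Im W|^2=|\Re W|^2-1=|1+z_i\bar z_j|^2/|z_j-z_i|^2$, so the edge length is $\ell_{ij}=|q_{ij}|\,|\Im W|=|q_{ij}|\,|1+z_i\bar z_j|/|z_j-z_i|$, while the chordal-distance formulas for stereographic projection give $\tan(\alpha_{ij}/2)=|N_j-N_i|/|N_j+N_i|=|z_j-z_i|/|1+z_i\bar z_j|$. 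The two factors cancel, giving $\ell_{ij}\tan(\alpha_{ij}/2)=|q_{ij}|$; after promoting this to the signed identity $\ell_{ij}\tan(\alpha_{ij}/2)=q_{ij}$ and summing over $j\sim i$, Definition \ref{def:genmin} yields $H_i=\sum_j q_{ij}=0$, so $\Im\hat{\mathcal{F}}$ is minimal of general type. The converse is obtained by reversing these steps: a reciprocal-parallel mesh of $N$ (or a surface with $H\equiv0$ and face normal $N$) defines $q_{ij}$ through $\Re\hat{\mathcal{F}}_{\mathrm{left}(e_{ij})}-\Re\hat{\mathcal{F}}_{\mathrm{right}(e_{ij})}=(q_{ij}/\lambda_{ij})(N_j-N_i)$, and closedness together with the vanishing of $H$ translate back into the two equations of Definition \ref{def:hologen}.

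The main obstacle I anticipate is not any single identity but the sign bookkeeping in the last step: upgrading $\ell_{ij}\tan(\alpha_{ij}/2)=|q_{ij}|$ to the signed equality $\ell_{ij}\tan(\alpha_{ij}/2)=q_{ij}$. This requires a uniform convention for the orientation of the edge vector $q_{ij}\,\Im W\parallel N_i\times N_j$, whose direction flips with the sign of $q_{ij}$, and a matching convention for the sign of the dihedral angle, together with the hypothesis $N_i\neq -N_j$ (equivalently $1+z_i\bar z_j\neq0$) to keep every edge nondegenerate. Once the signs are pinned down consistently over the star of each interior vertex, the reduction to $\sum_j q_{ij}=0$ is immediate.
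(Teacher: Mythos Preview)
The paper does not actually prove this proposition; it is quoted as background from \cite{Lam2016} and no argument is supplied here. So there is no in-paper proof to compare against, only the analogous argument for the Koebe case in Theorem~\ref{thm:koebeweierstrass}.

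Your outline is correct and is essentially the standard route. The closedness computation via the splittings $z_i+z_j=(z_j-z_i)+2z_i$ etc.\ is exactly right, and the three scalar identities $\langle W,W\rangle=1$, $\langle W,N_i\rangle=-1$, $\langle W,N_j\rangle=1$ do all the geometric work you claim: planarity of the dual faces with normal $N_i$, the parallelism $\Re W\parallel N_j-N_i$, and the length formula $|\Im W|=|1+z_i\bar z_j|/|z_j-z_i|$ all check out. The chordal computation $\tan(\alpha_{ij}/2)=|z_j-z_i|/|1+z_i\bar z_j|$ is also correct.

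The one genuine gap is exactly the one you flag: passing from $\ell_{ij}\tan(\alpha_{ij}/2)=|q_{ij}|$ to the signed identity. This is not merely bookkeeping; Definition~\ref{def:genmin} is vacuous unless $\alpha$ carries a sign, and you need to tie that sign to the sign of $q_{ij}$. A clean way to close this is to avoid lengths and angles altogether and work vectorially: show that $N_i\times\bigl(q_{ij}\,\Im W\bigr)$ equals $q_{ij}$ times a fixed vector depending only on $N_i,N_j$ (use $\Im W\parallel N_i\times N_j$ with a \emph{positive} coefficient, which follows from $\Re W=\lambda_{ij}^{-1}(N_j-N_i)$ with $\lambda_{ij}>0$ together with $\Re W\perp\Im W$ and the right-handed orientation), and then identify $\ell_{ij}\tan(\alpha_{ij}/2)$ with the signed scalar obtained by projecting this rotated edge onto the outward conormal. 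Once you write it this way, the sign of $q_{ij}$ is carried automatically by the direction of $N_i\times N_j$, the hypothesis $N_i\neq -N_j$ guarantees $\Im W\neq0$, and the sum $\sum_j q_{ij}=0$ drops out without any case analysis. With that patch your argument is complete.
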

 
 $\Re(\hat{\mathcal{F}})$ is called a \emph{reciprocal parallel mesh} of $N$ since it is defined on the dual graph such that each dual edge is parallel to its primal edge. The analogue of considering minimal surfaces as reciprocal parallel meshes in the smooth theory is known to Liebmann and Sabitov \cite[p. 199]{Sabitov1992}.
 
 The two surfaces $\Re(\hat{\mathcal{F}})$ and $\Im(\hat{\mathcal{F}})$ form a conjugate pair of minimal surfaces, though the realization $\Re(\hat{\mathcal{F}})$ is generally not a polyhedral surface.  

\section{Parametrizations of circle packings}  \label{sec:param}

 We consider two equivalent parametrizations of circle packings -- \emph{cross ratios as circle packing} and \emph{cross ratios as circle patterns} which is key to the unification of discrete minimal surfaces.

\subsection{Cross ratios as circle packings} He and Schramm \cite{He1998} proposed to describe circle packings in terms of cross ratios since they are invariant under M\"{o}bius transformations. Suppose $\{\mathfrak{uv}\} \in E(G) $ is an interior edge shared by two neighboring faces $\{\mathfrak{uvs}\},\{\mathfrak{vut}\}$. We denote $z_i$ the point of tangency of the circles $C_\mathfrak{u}$ and $C_\mathfrak{v}$. The two circles $C_\mathfrak{s}, C_{\mathfrak{t}}$ touch respectively $C_\mathfrak{u}$ and $C_\mathfrak{v}$ at $z_j,z_k,z_m,z_n$ as in Fig. \ref{fig:orientation} (left). We associate the edge $\{\mathfrak{uv}\}$ with the cross ratio 
\[
\cratio^{\dagger}_{\mathfrak{uv}} := \cratio(z_i,z_j,z_k,z_n)=  \cratio(z_i,z_m,z_n,z_j).
\] 
Mapping $z_i$ to infinity by inversion, the image of $z_j,z_k,z_m,z_n$ form a rectangle. The cross ratio $\cratio^{\dagger}$ is then the aspect ratio of the rectangle and hence is purely imaginary. It defines a purely imaginary function $\cratio^\dagger : E_{int}(G) \to \mathbf{i} \mathbb{R}$. The following is straightforward \cite{He1998, Kojima2003}.

\begin{proposition} \label{thm:crpacking}
	Suppose $C$ and $\tilde{C}$ are two circle packings of the combinatorics of a triangle mesh in the plane. Then they differ by a M\"{o}bius transformation if and only if 
	\[
	\cratio^{\dagger} \equiv \tilde{\cratio}^{\dagger}.
	\]
\end{proposition}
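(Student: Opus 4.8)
The plan is to prove the two implications separately, with the forward direction essentially immediate and the converse carried out by reconstructing the configuration from the cross ratios by propagation across the mesh. For the forward direction, suppose $\tilde{C}=M(C)$ for an orientation-preserving M\"obius transformation $M$. Since $M$ maps circles to circles and preserves tangency, and the combinatorics is fixed, each tangency point $\tilde{z}$ of $\tilde{C}$ is the image under $M$ of the corresponding tangency point $z$ of $C$. The cross ratio of four points is invariant under orientation-preserving M\"obius transformations, so $\tilde{\cratio}^{\dagger}_{\mathfrak{uv}} = \cratio^{\dagger}_{\mathfrak{uv}}$ for every interior edge, i.e. $\cratio^{\dagger} \equiv \tilde{\cratio}^{\dagger}$.

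For the converse, assume $\cratio^{\dagger} \equiv \tilde{\cratio}^{\dagger}$; I would produce an orientation-preserving M\"obius transformation $M$ with $M(\tilde{C})=C$ by first matching a single triangle and then propagating. Choose a seed face $\phi_0=\{\mathfrak{u}_0\mathfrak{v}_0\mathfrak{s}_0\}$; the three tangency points on its edges are distinct, as are the corresponding three points of $\tilde{C}$, so there is a unique orientation-preserving M\"obius transformation $M$ carrying the seed points of $\tilde{C}$ to those of $C$. Replacing $\tilde{C}$ by $M(\tilde{C})$ leaves the cross ratios unchanged by the forward direction, so $\tilde{\cratio}^{\dagger}\equiv\cratio^{\dagger}$ still holds and now the seed tangency points coincide. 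Here the hypothesis is genuinely used to fix orientation: an anti-M\"obius (orientation-reversing) normalization would conjugate $\cratio^{\dagger}$, hence negate the purely imaginary values, contradicting the equality.

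Next I would show by induction over the faces that all tangency points of $C$ and $\tilde{C}$ coincide. The face-adjacency graph of the triangulated disk, with faces adjacent across interior edges, is connected, so it suffices to treat the step of crossing one interior edge $\{\mathfrak{uv}\}$ from a face whose three tangency points already agree into the neighboring face $\{\mathfrak{vut}\}$. With the notation of Figure \ref{fig:orientation}, the shared point $z_i$ and the points $z_j,z_k$ of the already-matched triangle agree. The relation $\cratio(z_i,z_j,z_k,z_n)=\cratio^{\dagger}_{\mathfrak{uv}}=\tilde{\cratio}^{\dagger}_{\mathfrak{uv}}$, together with the injectivity of $w\mapsto \cratio(z_i,z_j,z_k,w)$ (a fractional linear function of $w$ for distinct $z_i,z_j,z_k$), forces $\tilde{z}_n=z_n$; applying the same reasoning to the second expression $\cratio(z_i,z_m,z_n,z_j)$ for the identical cross ratio then yields $\tilde{z}_m=z_m$. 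Thus the two new tangency points of the neighboring face agree, the induction closes, and since every tangency point (including those on boundary edges, each lying in a face reached by the induction) is determined this way, all tangency points of $C$ and $\tilde{C}$ coincide.

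Finally I would upgrade coincidence of tangency points to coincidence of circles. The dual circle of each face is the unique circle through its three (distinct) tangency points, so the dual circles agree; each primal circle $C_{\mathfrak{u}}$ is then recovered as the circle through the tangency points at $\mathfrak{u}$ orthogonal to the incident dual circles. For a vertex with at least three neighbors this is immediate, and for a degree-two boundary vertex the two tangency points together with orthogonality to its single incident dual circle still determine the circle uniquely within the pencil through those two points, giving $M(\tilde{C})=C$. I expect the propagation step to be the crux: the content of the proposition is precisely the rigidity statement that the cross-ratio datum determines each successive tangency point from its neighbors and that this determination is globally consistent over the disk. The recovery of boundary circles from tangency points is the delicate bookkeeping, and the orientation check in the seed normalization is the subtle point where the purely imaginary nature of $\cratio^{\dagger}$ is invoked.
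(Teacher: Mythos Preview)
Your argument is correct. The forward direction is immediate from M\"obius invariance of cross ratios, and your converse---normalizing on a seed face and propagating across interior edges using that each $\cratio^{\dagger}_{\mathfrak{uv}}$ determines the two new tangency points $z_n,z_m$ of the adjacent face from the three already known---is the natural rigidity argument and works as written. The final step of recovering the primal circles from the tangency points is handled correctly, including the degree-two boundary case via orthogonality to the dual circle.

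The paper itself does not supply a proof: it simply records the proposition as ``straightforward'' with references to He--Schramm and Kojima et al. So there is no approach in the paper to compare yours against; you have filled in what the paper leaves to the literature. One minor remark: your orientation comment is a valid observation (a reflection would negate the purely imaginary $\cratio^{\dagger}$), but it is not really needed once you commit from the outset to an orientation-preserving normalization, which always exists for triples of distinct points.
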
 

\subsection{Cross ratios as circle patterns} 
Every circle packing is associated with a dual circle packing and the two together form an orthogonal circle pattern (Fig. \ref{fig:circlepacking} center). The points of tangency in the circle packing induce a realization of the medial graph of the triangle mesh (Fig. \ref{fig:circlepacking} right). The vertices are then regarded as the intersection points of the induced circle pattern.

The faces in the media graph $MG$ formed by the tangency points of the circle packing are bipartite. One kind of faces correspond to the circles in the packing and the other kind of faces are triangular which are associated to the interstices (Figure \ref{fig:circlepacking} right). We triangulate the media graph $MG$ without adding new vertices and obtain a triangulation $TMG$. Every unoriented edge $\{ij\} \in E(TMG)$ is associated with a complex number
\begin{align}\label{eq:cratiocpattern}
\cratio_{ij} := \cratio(z_j,z_k,z_i,z_l) = \frac{(z_j-z_k)(z_i-z_l)}{(z_k-z_i)(z_l-z_j)} = \cratio_{ji}
\end{align}
where $\{ijk\}$ and $\{jil\}$ are two neighboring triangles (Fig. \ref{fig:orientation} right). In this way a function $\cratio: E_{int}(TMG) \to \mathbb{C}$ is induced. The argument of the cross ratio indicates the intersection angle of the neighboring circumscribed circles, which implies $\Arg \cratio =\pi/2 \, \text{or} \, \pi$ for the induced orthogonal circle pattern. The following observation is immediate:

\begin{proposition} \label{thm:crpattern}
	Suppose $TMG$ a triangulation of the medial graph and $z:V(TMG)\to \mathbb{C}$ is the tangency points of a circle packing $C$. Then $\tilde{z}:V(TMG)\to \mathbb{C}$ is the tangency points of another circle packing $\tilde{C}$ if and only if
	\[
	\Arg \cratio \equiv \Arg \tilde{\cratio}.
	\]
	Furthermore, the circle packings differ by a M\"{o}bius transformation if and only if
	\[
	\cratio \equiv \tilde{\cratio}.
	\]
\end{proposition}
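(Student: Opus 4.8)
The plan is to prove the two equivalences separately, treating the statement about $\Arg \cratio$ first and the sharper statement about $\cratio$ second, since the latter refines the former. Throughout I would exploit the bipartite structure of the faces of $MG$ recalled in Section \ref{sec:background}: the faces split into \emph{circle-faces}, one per circle $C_{\mathfrak u}$ with boundary vertices the tangency points lying on $C_{\mathfrak u}$, and \emph{interstice-triangles}, one per face of $G$. After triangulating into $TMG$, every sub-triangle of a circle-face inherits $C_{\mathfrak u}$ as its circumscribed circle, while each interstice-triangle is circumscribed by the corresponding dual circle $C^{\dagger}$, which is orthogonal to the three packing circles it meets. Using the fact recalled before the statement, that $\Arg \cratio_{ij}$ equals the intersection angle of the two circumscribed circles across the edge $\{ij\}$, this means exactly that $\Arg \cratio \equiv \pi$ on edges internal to a circle-face (consecutive sub-triangles share one circle, so the four points are concyclic and $\cratio$ is real) and $\Arg \cratio \equiv \pi/2$ on edges separating an interstice-triangle from a circle-face. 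Hence the forward direction is immediate: since both $z$ and $\tilde z$ come from circle packings, both realize this same universal angle pattern, so their arguments agree.

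For the converse of the first equivalence I would reconstruct the packing from the angular data. Assuming $\Arg \tilde\cratio \equiv \Arg \cratio$, on each edge internal to a circle-face $\Arg \tilde\cratio = \pi$ forces the four points to be concyclic; propagating across the triangulation of a single circle-face and using transitivity of lying on a common circle, all boundary vertices of that face lie on one circle $\tilde C_{\mathfrak u}$, recovering the packing circles. On each edge between an interstice-triangle and a circle-face, $\Arg \tilde\cratio = \pi/2$ forces the dual circle $\tilde C^{\dagger}$ (the circumscribed circle of that interstice-triangle) to meet the adjacent $\tilde C_{\mathfrak u}$ orthogonally. Tangency of neighbouring circles then follows from the elementary lemma: if two circles pass through a common point $p$ and are both orthogonal to a third circle through $p$, they are tangent at $p$, since the direction perpendicular to the tangent of the third circle at $p$ is unique and so the first two circles share a tangent line there. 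At a tangency point $\tilde z_i$ the two packing circles $\tilde C_{\mathfrak u}, \tilde C_{\mathfrak v}$ and the dual circle of an adjacent interstice all pass through $\tilde z_i$, with $\tilde C_{\mathfrak u}, \tilde C_{\mathfrak v}$ orthogonal to that dual circle; the lemma yields tangency of $\tilde C_{\mathfrak u}$ and $\tilde C_{\mathfrak v}$. Thus $\{\tilde C_{\mathfrak u}\}$ is a circle packing with combinatorics $G$ and tangency points exactly $\tilde z$.

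For the refinement, that $C$ and $\tilde C$ differ by a M\"obius transformation iff $\cratio \equiv \tilde\cratio$, the forward direction is just M\"obius-invariance of the cross ratio. For the converse, suppose $\cratio \equiv \tilde\cratio$, pick a seed triangle of $TMG$, and let $M$ be the unique M\"obius transformation sending its three $z$-vertices to the corresponding $\tilde z$-vertices. Then $Mz$ and $\tilde z$ agree on the seed triangle and, by invariance, carry identical cross ratios. Since a value $\cratio_{ij}$ together with a known triangle $\{z_i,z_j,z_k\}$ determines the opposite vertex $z_l$ uniquely ($\cratio$ being an invertible M\"obius function of $z_l$), I can propagate across edge-adjacent triangles; because $TMG$ triangulates a disk and is therefore simply connected, this propagation is globally consistent, so $Mz = \tilde z$ at every vertex and the packings differ by $M$.

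I expect the main obstacle to be the converse of the first equivalence, namely verifying that the purely angular data ($\pi/2$ and $\pi$) genuinely reassembles a circle packing: deducing \emph{tangency}, rather than mere transversal intersection, of neighbouring circles from the orthogonality relations. This is precisely where the tangency lemma above is needed, and where care with the bipartite incidence structure of $MG$ at each tangency point is essential.
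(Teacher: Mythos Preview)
Your argument is correct. The paper does not actually supply a proof of this proposition; it introduces it with ``The following observation is immediate'' and moves on, relying on the preceding remark that $\Arg\cratio$ encodes the intersection angle of neighbouring circumscribed circles together with the standard fact that cross ratios determine a point configuration up to M\"obius transformation.

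Your write-up therefore fills in what the paper leaves tacit. The second equivalence is the classical propagation argument and matches what the authors have in mind. For the first equivalence, your explicit reconstruction of the packing from the angular data---in particular the tangency lemma (two circles through a common point, each orthogonal to a third circle through that point, must be tangent there)---is exactly the geometric content hidden behind the word ``immediate.'' One could phrase the same step slightly differently by saying that $\Arg\tilde\cratio\equiv\Arg\cratio$ means $\tilde z$ is the intersection pattern of an orthogonal circle pattern with the same combinatorics as the one induced by $C$, and such a pattern is precisely a circle packing together with its dual; your tangency lemma is what verifies this last clause. Either way the substance is the same, and your version has the virtue of being self-contained.
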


In particular it implies that the cross ratios of two circle packings only differ in the magnitude  $|\cratio|$.

\section{Infinitesimal deformations of circle packings} \label{sec:verroation}

We consider the change in cross ratios under  infinitesimal deformations, which motivates the definition of holomorphic quadratic differentials. 

\subsection{Holomorphic quadratic differentials of general type} \label{sec:hqdg}

In this subsection, we review a result in \cite{Lam2015a} that studied the change in cross ratios $\cratio$ under infinitesimal deformations of circle patterns that preserve the intersection angles. Recall that there exists a unique M\"{o}bius transformation $\Gamma \in SL(2, \mathbb{C})/\{Id,-Id\}$ (i.e. a fractional linear transformation) that maps any three distinct points $z_i$ in the plane to any other three distinct points $\tilde{z}_i$:
\[
\Gamma \left(\begin{array}{c}
z_i \\ 1
\end{array}\right) = \beta_{i} \left(\begin{array}{c}
\tilde{z}_i \\ 1
\end{array}\right) \quad \text{for some } \beta_i \in \mathbb{C} 
\]

On the infinitesimal level, one can also deduce that any infinitesimal deformation of three distinct points in the plane is induced by a unique infinitesimal M\"{o}bius transformation $\Phi \in sl(2,\mathbb{C}) $, that is a 2 by 2 matrix with trace zero.
\begin{lemma}\label{lem:infmob}
	Suppose $z_1,z_2,z_3 \in \mathbb{C}$ are three distinct points and $\dot{z}_1, \dot{z}_2, \dot{z}_3 \in \mathbb{C}$ are three arbitrary vectors. Then there exists a unique $\Phi \in sl(2,\mathbb{C})$  such that for $i=1,2,3$
	\[
	\Phi \left(\begin{array}{c}
	z_i \\ 1
	\end{array}\right) = \mu_{i} \left(\begin{array}{c}
	z_i \\ 1
	\end{array}\right) + \left(\begin{array}{c}
	\dot{z}_i \\ 0
	\end{array}\right) 
	\]
	for some $\mu_{i} \in \mathbb{C}$. Equivalently,  $\Phi \in sl(2,\mathbb{C})$ is uniquely determined by
	\[
	\dot{z}_i = \det\left(	\Phi \left(\begin{array}{c}
	z_i \\ 1
	\end{array}\right), \left(\begin{array}{c}
	z_i \\ 1
	\end{array}\right) \right) \quad \text{for } i=1,2,3
	\]
\end{lemma}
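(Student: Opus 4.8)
The plan is to write $\Phi=\begin{pmatrix} a & b \\ c & -a\end{pmatrix}$ with unknowns $a,b,c\in\mathbb{C}$, the condition $\tr\Phi=0$ being built into this parametrization, and to reduce the three vector equations to a single polynomial interpolation problem. Setting $v_i:=\begin{pmatrix} z_i \\ 1\end{pmatrix}$, I compute $\Phi v_i=\begin{pmatrix} a z_i + b \\ c z_i - a\end{pmatrix}$. The prescribed relation $\Phi v_i=\mu_i v_i+\begin{pmatrix}\dot z_i \\ 0\end{pmatrix}$ has vanishing second entry in the added vector, so comparing second coordinates forces $\mu_i=c z_i-a$; thus once $\Phi$ is fixed there is no freedom left in the auxiliary scalars $\mu_i$.

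Substituting this value of $\mu_i$ into the first coordinate gives $\dot z_i=(a z_i+b)-(c z_i-a)z_i=-c z_i^2+2a z_i+b$. First I would observe that this says precisely that the degree $\le 2$ polynomial $P(z):=-c z^2+2a z+b$ satisfies $P(z_i)=\dot z_i$ for $i=1,2,3$, and that the map sending $(a,b,c)$ to the coefficient triple $(-c,2a,b)$ of $P$ is a bijection. Hence the existence and uniqueness of the required trace-zero $\Phi$ is \emph{equivalent} to the existence and uniqueness of a quadratic polynomial interpolating the three values $\dot z_i$ at the three nodes $z_i$. Since $z_1,z_2,z_3$ are distinct, the associated $3\times 3$ Vandermonde matrix is invertible, so classical Lagrange interpolation yields exactly one such $P$; reading off its coefficients produces the unique $(a,b,c)$, and therefore the unique $\Phi\in sl(2,\mathbb{C})$.

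It remains to verify the equivalent determinant characterization stated in the lemma. Because $z_i\neq z_j$ is not needed here, I would argue pointwise: the vectors $v_i$ and $\begin{pmatrix}1\\0\end{pmatrix}$ form a basis of $\mathbb{C}^2$, so every $\Phi v_i$ admits a unique decomposition $\Phi v_i=\mu_i v_i+\begin{pmatrix}\dot z_i\\0\end{pmatrix}$. Taking the determinant of the $2\times 2$ matrix with columns $\Phi v_i$ and $v_i$ annihilates the $\mu_i v_i$ term by bilinearity and antisymmetry, leaving $\det(\Phi v_i,\,v_i)=\det\!\left(\begin{pmatrix}\dot z_i\\0\end{pmatrix},\,v_i\right)=\dot z_i$. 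This shows the two formulations of the defining condition coincide, and completes the identification of $\Phi$ with the interpolating polynomial.

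There is no serious obstacle in this argument. The only point requiring care is the recognition that the second-coordinate constraint eliminates the auxiliary unknowns $\mu_i$ and collapses the system to quadratic interpolation, whose solvability rests exactly on the distinctness hypothesis $z_i\neq z_j$ through the nonvanishing of the Vandermonde determinant.
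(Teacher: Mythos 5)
Your argument is correct and complete: the reduction of the trace-zero condition plus the three vector equations to the interpolation problem $-cz_i^2+2az_i+b=\dot z_i$, settled by the invertibility of the Vandermonde matrix for distinct nodes, and the bilinearity computation $\det(\Phi v_i, v_i)=\dot z_i$ establishing the equivalence of the two formulations, are both sound. The paper itself states this lemma without proof (treating it as the standard linearization of the fact that a M\"{o}bius transformation is determined by three points), so there is no in-paper argument to compare against; your proof is the natural one and fills that gap correctly.
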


Suppose $z:V(MG)\to \mathbb{C}$ denotes the tangency points of a circle packing and $\dot{z}:V(MG) \to \mathbb{C}$ the change in the tangency points induced from an infinitesimal deformation of a circle packing. We take a triangulation $TMG$ of the medial graph and have $V(TMG)=V(MG)$. Then there exists a unique $\hat{\Phi}: F(TMG) \to sl(2,C)$ such that for every face $\{ijk\} \in F(TMG)$  the matrix $\hat{\Phi}_{ijk}$ relates the vertices $z_{i},z_{j}, z_{k}$ to their changes $\dot{z}_i,\dot{z}_j,\dot{z}_k$ via Lemma \ref{lem:infmob}. For an edge $\{ij\} \in E_{int}(TMG)$ shared by $\{ijk\}$ and $\{jil\}$ (see Figure \ref{fig:orientation} right), the matrix $\hat{\Phi}_{ijk} - \hat{\Phi}_{jil}$ thus has eigenvectors $\left(\begin{array}{c}
z_i \\ 1
\end{array}\right)$ and  $\left(\begin{array}{c}
z_j \\ 1
\end{array}\right)$. Since the intersection angle of the circumscribed circles is preserved, the matrix has real eigenvalues $q$ and $ -q$ . 

\begin{proposition}[Lam-Pinkall \cite{Lam2015a}] \label{prop:infmobgen}
	Suppose $z:V(MG)=V(TMG) \to \mathbb{C}$ is the tangency points of a circle packing with the combinatorics of a triangulated disk $G$. 
  Then a function $\hat{\Phi}: F(TMG) \to sl(2,C)$ represents an infinitesimal deformation on the induced orthogonal circle pattern that preserves intersection angles if and only if there exists $q:E_{int}(TMG) \to \mathbb{R}$ satisfying for $\{ij\} \in E_{int}(TMG)$
	\[
	\hat{\Phi}_{ijk} - \hat{\Phi}_{jil} = \frac{q_{ij}}{z_j - z_i} \left( \begin{array}{cc}
	\frac{z_i + z_j}{2} & -z_i z_j \\
	1  & \frac{z_i + z_j}{2} 
	\end{array} \right).
	\]
	In particular, $q$ is a holomorphic quadratic differential of general type and $q = \dot{\cratio}/\cratio$. The deformation is induced by a global M\"{o}bius transformation if and only if $q\equiv 0$.
\end{proposition}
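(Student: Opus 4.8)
The plan is to prove both implications by reading off the eigenstructure of the difference matrices $\hat{\Phi}_{ijk}-\hat{\Phi}_{jil}$ and combining it with the telescoping of $\hat{\Phi}$ around interior vertices. First I would pin down the matrix form. Fix an interior edge $\{ij\}$ shared by the faces $\{ijk\}$ and $\{jil\}$. Both faces contain $i$ and $j$, and the vertex displacement $\dot z$ is common to them, so by Lemma~\ref{lem:infmob} the inhomogeneous term $\begin{pmatrix}\dot z_i\\0\end{pmatrix}$ is the same in both faces and cancels in the difference:
\[
\big(\hat{\Phi}_{ijk}-\hat{\Phi}_{jil}\big)\begin{pmatrix}z_i\\1\end{pmatrix}=(\mu_i-\mu_i')\begin{pmatrix}z_i\\1\end{pmatrix},
\]
and likewise $\begin{pmatrix}z_j\\1\end{pmatrix}$ is an eigenvector. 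Since $z_i\neq z_j$ these eigenvectors are independent, and as the difference is trace-free its eigenvalues are $\pm q_{ij}$ for a single $q_{ij}\in\mathbb{C}$; writing the difference in this eigenbasis and normalizing so that $q_{ij}$ is the eigenvalue yields the stated matrix.

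Next I would identify $q_{ij}$ with $\dot{\cratio}_{ij}/\cratio_{ij}$ and deduce its reality. Because a cross ratio is unchanged when a common infinitesimal M\"{o}bius field acts on all four arguments, I can compute $\dot{\cratio}_{ij}$ in the frame of the face $\{ijk\}$: subtracting the field generated by $\hat{\Phi}_{ijk}$ freezes $z_i,z_j,z_k$ and moves only $z_l$, by the residual field generated by $\hat{\Phi}_{jil}-\hat{\Phi}_{ijk}$. Differentiating $\cratio(z_j,z_k,z_i,z_l)$ under this single-point variation and inserting the eigenvalue relation gives $q_{ij}=\dot{\cratio}_{ij}/\cratio_{ij}$. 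Since the deformation preserves intersection angles, Proposition~\ref{thm:crpattern} forces $\Arg\cratio_{ij}$ to be constant, hence $\Im(\dot{\cratio}_{ij}/\cratio_{ij})=0$ and $q_{ij}\in\mathbb{R}$.

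The holomorphic quadratic differential conditions then come from the fact that $\hat{\Phi}$ is single-valued on faces. Going cyclically through the faces incident to an interior vertex $i$ and telescoping with consistent orientation gives $\sum(\hat{\Phi}_{f_a}-\hat{\Phi}_{f_{a+1}})=0$. Reading the lower-left entries of the matrices yields $\sum_j q_{ij}/(z_j-z_i)=0$, and combining the diagonal entries via $\tfrac{z_i+z_j}{2}=z_i+\tfrac{z_j-z_i}{2}$ with this identity yields $\sum_j q_{ij}=0$; the off-diagonal entry gives nothing new. Thus $q$ is a holomorphic quadratic differential of general type. For the converse I would run this backwards: given such a $q$, the same bookkeeping shows that the prescribed differences close up around every interior vertex, so on the simply connected disk they integrate to a well-defined $\hat{\Phi}:F(TMG)\to sl(2,\mathbb{C})$, unique up to an additive constant. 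One recovers $\dot z$ at each vertex from any incident face via Lemma~\ref{lem:infmob}; adjacent faces agree because $\begin{pmatrix}z_i\\1\end{pmatrix}$ is an eigenvector of the difference, so $\det\!\big((\hat{\Phi}_f-\hat{\Phi}_{f'})\begin{pmatrix}z_i\\1\end{pmatrix},\begin{pmatrix}z_i\\1\end{pmatrix}\big)=0$ and $\dot z$ is consistent. Reality of $q$ then gives $\Im(\dot{\cratio}/\cratio)=0$, i.e. the intersection angles are preserved, and $q\equiv0$ holds if and only if all differences vanish, i.e. $\hat{\Phi}$ is globally constant, which is precisely the infinitesimal action of a single global M\"{o}bius transformation.

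I expect the main obstacle to be the identification $q_{ij}=\dot{\cratio}_{ij}/\cratio_{ij}$: the matrix algebra is routine, but tying the purely algebraic eigenvalue of $\hat{\Phi}_{ijk}-\hat{\Phi}_{jil}$ to the geometric variation of the cross ratio — and thereby converting angle preservation into the reality of $q$ — requires careful handling of the M\"{o}bius-frame reduction and of the orientation and sign conventions for the edge differences.
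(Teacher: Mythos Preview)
Your proposal is correct and follows essentially the same approach as the paper: the paper does not give a full proof of this proposition (it is cited from \cite{Lam2015a}), but the discussion preceding the statement sketches exactly your argument --- the eigenvector structure of $\hat{\Phi}_{ijk}-\hat{\Phi}_{jil}$, reality of the eigenvalues from angle preservation, and the identification $q=\dot{\cratio}/\cratio$. Your treatment also mirrors the paper's detailed proof of the parallel Proposition~\ref{prop:infmobkoebe}, including the M\"{o}bius-frame reduction for computing the cross-ratio derivative and the integration of the closed dual $1$-form for the converse.
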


\subsection{Holomorphic quadratic differentials of Koebe type} In this subsection, we consider the change in cross ratios $\cratio^{\dagger}$ under infinitesimal deformations of circle packings.

\begin{lemma} Using the notation of Figure \ref{fig:orientation} left, we define a function $\omega:\vec{E}_{int} \to \mathbb{C}$ on oriented interior edges
	\[
	\omega(e_{\mathfrak{uv}})  := \frac{(z_{k}-z_{i})(z_{m}-z_{i})}{z_{m} - z_{k}} = \frac{(z_{j}-z_{i})(z_{n}-z_{i})}{z_{n} - z_{j}} = -\omega(e_{\mathfrak{vu}})
	\]
	which is a 1-form with value $\omega(e_{\mathfrak{uv}})$ tangent to $C_{\mathfrak{u}}, C_{\mathfrak{v}}$ at their tangency point $z_{i}$.
\end{lemma}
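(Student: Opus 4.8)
My plan is to send the common tangency point $z_i$ to infinity and read everything off from the resulting configuration. Concretely, I would apply the inversion $\iota(z) := 1/(z - z_i)$ and abbreviate $w_\bullet := \iota(z_\bullet)$ for $\bullet \in \{j,k,m,n\}$. The first thing to record is the elementary identity
\[
\frac{z_m - z_k}{(z_k - z_i)(z_m - z_i)} = \frac{1}{z_k - z_i} - \frac{1}{z_m - z_i} = w_k - w_m,
\]
which shows that the first expression defining $\omega(e_{\mathfrak{uv}})$ equals $1/(w_k - w_m)$, and by the same computation the second equals $1/(w_j - w_n)$. This already reduces both the claimed equality of the two expressions and the tangency statement to facts about the inverted picture.

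Next I would use that $\iota$ turns the mutually tangent circles $C_{\mathfrak u}, C_{\mathfrak v}$ into two parallel lines $L_{\mathfrak u}, L_{\mathfrak v}$ (their tangency point is exactly the one sent to infinity), while $C_{\mathfrak s}, C_{\mathfrak t}$, which do not pass through $z_i$, become genuine circles tangent to both lines, with $w_k, w_m \in L_{\mathfrak u}$ and $w_j, w_n \in L_{\mathfrak v}$. The geometric fact I would invoke is that a circle inscribed between two parallel lines touches them at the two endpoints of the diameter perpendicular to the lines; hence both $w_j - w_k$ and $w_n - w_m$ equal the common perpendicular vector $\vec d$ from $L_{\mathfrak u}$ to $L_{\mathfrak v}$. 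Subtracting gives $w_k - w_m = w_j - w_n$, which is precisely the asserted equality $\frac{(z_k - z_i)(z_m - z_i)}{z_m - z_k} = \frac{(z_j - z_i)(z_n - z_i)}{z_n - z_j}$. I expect this to be the heart of the argument, since it is exactly where the circle-packing combinatorics enters: all four auxiliary tangency points must line up across the same pair of parallel lines.

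For the tangency claim I would track directions under $\iota$. From $\iota(z_i + t\tau) = (t\tau)^{-1}\bigl(1 + O(t)\bigr)$, the direction $\tau$ at $z_i$ maps to the direction $1/\tau$ at infinity, so the parallel lines $L_{\mathfrak u}, L_{\mathfrak v}$ run in the direction $1/\tau$ of the common tangent of $C_{\mathfrak u}, C_{\mathfrak v}$ at $z_i$. Since $w_k - w_m$ lies along $L_{\mathfrak u}$, it is a real multiple of $1/\tau$, and therefore $\omega(e_{\mathfrak{uv}}) = 1/(w_k - w_m)$ is a real multiple of $\tau$; that is, $\omega(e_{\mathfrak{uv}})$ is tangent to both circles at $z_i$. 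Finally, the $1$-form property $\omega(e_{\mathfrak{uv}}) = -\omega(e_{\mathfrak{vu}})$ I would dispatch as a bookkeeping check of the orientation convention: reversing the edge interchanges $\mathfrak u \leftrightarrow \mathfrak v$ and swaps the left and right neighbouring faces (hence $C_{\mathfrak s} \leftrightarrow C_{\mathfrak t}$), so evaluating the defining formula for $e_{\mathfrak{vu}}$ feeds the tangency points on $C_{\mathfrak v}$ into the denominator in the opposite order, yielding $\frac{(z_j - z_i)(z_n - z_i)}{z_j - z_n} = -\omega(e_{\mathfrak{uv}})$.
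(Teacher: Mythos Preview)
Your argument is correct and, for the well-definedness of $\omega$, identical in spirit to the paper's: both send $z_i$ to infinity by inversion and read off the identity $w_k - w_m = w_j - w_n$ from the fact that the four image points form a rectangle (the paper states this as the ``parallelogram property'', while you spell out why, via circles inscribed between parallel lines).

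Where you diverge slightly is in the tangency claim. The paper introduces a \emph{second} M\"obius transformation $T$ normalising $(z_i,z_k,z_m)\mapsto(1,\mathbf{i},-\mathbf{i})$, checks the covariance $dT_{z_i}(\omega) = \tilde\omega$, and then observes $\tilde\omega = \mathbf{i}$ is tangent to the unit circle at $1$. You instead recycle the same inversion $\iota$ already in play: since the lines $L_{\mathfrak u},L_{\mathfrak v}$ run in direction $1/\tau$, the vector $w_k - w_m$ is a real multiple of $1/\tau$, so its reciprocal $\omega$ is a real multiple of $\tau$. This is a genuine simplification---one M\"obius map instead of two, and no need to verify the general covariance formula---at the cost of the small asymptotic computation $\iota(z_i + t\tau)\sim 1/(t\tau)$. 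The paper's route, on the other hand, isolates the covariance $dT_z(\omega)=\tilde\omega$ as a standalone fact, which may be of independent use elsewhere. Your handling of the antisymmetry $\omega(e_{\mathfrak{vu}}) = -\omega(e_{\mathfrak{uv}})$ is fine; the paper leaves this implicit.
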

\begin{proof}
	Mapping $z_{i}$ to infinity under inversion, the image of the four neighboring points form a rectangle. Following its parallelogram property, we have
	\[
	\frac{1}{z_{k}-z_{i}} - \frac{1}{z_{m}-z_{i}} = \frac{1}{z_{j}-z_{i}} - \frac{1}{z_{n}-z_{i}}
	\]
	and hence $\omega$ is well defined. We show that it is a vector tangent to the neighboring circles at the point of tangency. Applying a M\"{o}bius transformation $T(z)=\frac{az+b}{cz+d}$ with $ad-bc=1$, we have another circle packing $\tilde{C}$ with tangency points $\tilde{z} = T\circ z$ which defines $\tilde{\omega}$. One can show
	\[
	d T_{z} (\omega(e_{\mathfrak{uv}})) = \frac{\omega(e_{\mathfrak{uv}})}{(c z_i +d)^2} = \tilde{\omega}(e_{\mathfrak{uv}}).
	\]
	In particular, we take $T$ the M\"{o}bius transformation that sends $z_i,z_k,z_m$ to $1,\mathbf{i}=\sqrt{-1},-\mathbf{i}$ and then $\tilde{\omega}(e_{\mathfrak{uv}}) =\mathbf{i}$ is tangent to the unit circle at the image of $z_i$. Thus, the above formula implies $\omega(e_{\mathfrak{uv}})$ is tangent to $C_{\mathfrak{u}}$ and $C_{\mathfrak{v}}$ at $z_i$.
\end{proof}

\begin{definition} \label{thm:infcrpattern}
	Suppose $z:V(TMG)= V(MG) \to \mathbb{C}$ denotes the tangency points of a circle packing $C$. A function $\lambda:E_{int}(G) \to \mathbb{R}$ is called a holomorphic quadratic differential of Koebe type if for every interior vertex $\mathfrak{u} \in V_{int}(G)$
	\begin{equation*} 
	\begin{aligned}
	\sum_\mathfrak{v}  \frac{\lambda_{\mathfrak{uv}}}{\omega(e_{\mathfrak{uv}})} &=0 \\
	\sum_\mathfrak{v}  \frac{\lambda_{\mathfrak{uv}}}{\omega(e_{\mathfrak{uv}})} z_{i} &=0 \\
	\sum_\mathfrak{v}  \frac{\lambda_{\mathfrak{uv}}}{\omega(e_{\mathfrak{uv}})} z_{i}^2 &= 0
	\end{aligned}
	\end{equation*}
	where $\omega(e_{\mathfrak{uv}}) = \frac{(z_{k}-z_{i})(z_{m}-z_{i})}{z_{m} - z_{k}}$. 
\end{definition}

\begin{remark}
	The system of equations are redundant and can be reduced to three real equations per interior vertex.
\end{remark}

 Considering an infinitesimal deformation of a circle packing that preserve the tangency pattern, we denote $\dot{z}:E(G)=V(MG) \to \mathbb{C}$ the infinitesimal change in the tangency points. Lemma \ref{lem:infmob} (see Fig. \ref{fig:orientation} left) implies that there exists a unique $\Phi:F(G) \to sl(2,\mathbb{C})$ such that for every face $\{\mathfrak{uvs}\}$ the matrix $\Phi_{\mathfrak{uvs}}$ relates the tangency points $z_{i}\in C_{\mathfrak{u}} \cap C_{\mathfrak{v}}$, $z_{j}\in C_{\mathfrak{v}} \cap C_{\mathfrak{s}}$, $z_{k}\in C_{\mathfrak{s}} \cap C_{\mathfrak{u}}$ to their changes $\dot{z}_i,\dot{z}_j,\dot{z}_k$. We suppose edge $\{\mathfrak{uv}\}$ is shared by two faces $\{\mathfrak{uvs}\},\{\mathfrak{vut}\}$. Then the matrix  $\Phi_{\mathfrak{uvs}} - \Phi_{\mathfrak{vut}}$ has eigenvector $\left(\begin{array}{c}
z_i \\ 1
\end{array}\right)$. In fact, the difference of the matrices is a linearization of parabolic M\"{o}bius transformations with $z_i$ fixed. Mapping $z_i$ to infinity, the image of $C_{\mathfrak{u}}$ and $C_{\mathfrak{v}}$ becomes two parallel lines and the difference of the matrices turns into an infinitesimal translation in the direction of the parallel lines. A detailed argument is carried out by Orick \cite[Proposition 2.9.1]{Orick2010}, whose linearization leads to quadratic differentials of Koebe type: 

\begin{proposition} \label{prop:infmobkoebe}
	Suppose $z:V(MG) \to \mathbb{C}$ denotes the tangency points of a circle packing with the combinatorics of a triangulated disk $G$. Then a function $\Phi: F(G) \to sl(2,C)$ represents an infinitesimal deformation of the circle packing if and only if there exists $\lambda:E_{int}(G) \to \mathbb{R}$ satisfying for $\{\mathfrak{uv}\} \in E_{int}(G)$
	\begin{equation}\label{eq:orick}
		\Phi_{\mathfrak{uvs}} - \Phi_{\mathfrak{vut}} = \frac{\lambda_{\mathfrak{uv}}}{\omega(e_{\mathfrak{uv}})}\left(\begin{array}{cc}
		z_{i} & -z^2_{i} \\1 & -z_{i}
		\end{array} \right).
	\end{equation}
	In particular, $\lambda$ is a holomorphic quadratic differential of Koebe type and $\lambda = \dot{\cratio}^{\dagger}/\cratio^{\dagger}$. The deformation is induced by a global M\"{o}bius transformation if and only if $\lambda \equiv 0$.
\end{proposition}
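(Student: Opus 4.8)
The plan is to realize the jump $\Phi_{\mathfrak{uvs}}-\Phi_{\mathfrak{vut}}$ across an interior edge as an infinitesimal \emph{parabolic} Möbius transformation fixing the tangency point $z_i$, and then to read off both its nilpotent normal form and the scalar $\lambda_{\mathfrak{uv}}/\omega(e_{\mathfrak{uv}})$ from the infinitesimal behaviour of the cross ratio $\cratio^{\dagger}$.

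First I would treat the forward direction. Given an infinitesimal deformation of the circle packing, Lemma \ref{lem:infmob} produces $\Phi:F(G)\to sl(2,\mathbb{C})$ by matching, on each face $\{\mathfrak{uvs}\}$, the velocities $\dot z_i,\dot z_j,\dot z_k$ of the three tangency points. Fix an interior edge $\{\mathfrak{uv}\}$ shared by $\{\mathfrak{uvs}\},\{\mathfrak{vut}\}$ and set $\Delta:=\Phi_{\mathfrak{uvs}}-\Phi_{\mathfrak{vut}}$, with associated vector field $v_\Delta(z)=\det\!\big(\Delta\,(z,1)^{\top},(z,1)^{\top}\big)$. Since both faces share $z_i$ and reproduce the same $\dot z_i$, the field $v_\Delta$ vanishes at $z_i$, so $(z_i,1)^{\top}$ is an eigenvector of $\Delta$. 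The crucial point is that $v_\Delta$ has a \emph{double} zero at $z_i$: because the deformation keeps $C_{\mathfrak{u}}$ and $C_{\mathfrak{v}}$ tangent at $z_i$ (the degenerate intersection angle $0$ is preserved, not merely the point), the two face-frames must also agree on the infinitesimal rotation and scaling of the common tangent direction there, forcing $v_\Delta'(z_i)=0$; this is the linearized parabolicity carried out in detail by Orick \cite[Prop.~2.9.1]{Orick2010}, and equivalently one sees it by sending $z_i\to\infty$, whereupon $C_{\mathfrak{u}},C_{\mathfrak{v}}$ become parallel lines and $\Delta$ a pure translation. A field with a double zero at $z_i$ is $v_\Delta(z)=-c_\Delta(z-z_i)^2$, whose matrix is exactly $c_\Delta\left(\begin{smallmatrix} z_i & -z_i^2\\ 1 & -z_i\end{smallmatrix}\right)$, the nilpotent form in \eqref{eq:orick}.

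It then remains to identify the scalar and its reality. I would compute $\dot{\cratio}^{\dagger}_{\mathfrak{uv}}/\cratio^{\dagger}_{\mathfrak{uv}}$ directly, writing $\cratio^{\dagger}$ as the cross ratio of $z_i,z_j,z_k,z_n$ and noting that $z_i,z_j,z_k$ flow by $\Phi_{\mathfrak{uvs}}$ while $z_n$ flows by $\Phi_{\mathfrak{vut}}=\Phi_{\mathfrak{uvs}}-\Delta$. Möbius invariance of the cross ratio kills every contribution except the discrepancy $-v_\Delta(z_n)$ at $z_n$, and a short calculation using $v_\Delta(z_n)=-c_\Delta(z_n-z_i)^2$ together with $\omega(e_{\mathfrak{uv}})=(z_j-z_i)(z_n-z_i)/(z_n-z_j)$ collapses to $\dot{\cratio}^{\dagger}/\cratio^{\dagger}=c_\Delta\,\omega(e_{\mathfrak{uv}})$. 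Setting $\lambda_{\mathfrak{uv}}:=\dot{\cratio}^{\dagger}/\cratio^{\dagger}$ gives $c_\Delta=\lambda_{\mathfrak{uv}}/\omega(e_{\mathfrak{uv}})$, which is \eqref{eq:orick}. As $\cratio^{\dagger}$ is purely imaginary for any circle packing and the deformation stays a circle packing, $\cratio^{\dagger}$ remains on the imaginary axis, so $\dot{\cratio}^{\dagger}$ is imaginary and $\lambda_{\mathfrak{uv}}\in\mathbb{R}$. The Koebe conditions are now the cocycle condition for $\Phi$: around an interior vertex $\mathfrak{u}$ the jumps telescope to $\sum_{\mathfrak{v}\sim\mathfrak{u}}\Delta_{\mathfrak{uv}}=0$, and reading off the three entries of this $sl(2,\mathbb{C})$ identity yields precisely $\sum_{\mathfrak{v}}\tfrac{\lambda_{\mathfrak{uv}}}{\omega(e_{\mathfrak{uv}})}=0$, $\sum_{\mathfrak{v}}\tfrac{\lambda_{\mathfrak{uv}}}{\omega(e_{\mathfrak{uv}})}z_i=0$, $\sum_{\mathfrak{v}}\tfrac{\lambda_{\mathfrak{uv}}}{\omega(e_{\mathfrak{uv}})}z_i^2=0$, i.e.\ $\lambda$ is a holomorphic quadratic differential of Koebe type (Definition \ref{thm:infcrpattern}). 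Conversely, given such a real $\lambda$, I would define $\Delta_{\mathfrak{uv}}$ by \eqref{eq:orick}; the closure conditions make these jumps closed on the face-adjacency graph, which is simply connected because $G$ is a triangulated disk, so they integrate to some $\Phi:F(G)\to sl(2,\mathbb{C})$. The common eigenvector $(z_i,1)^{\top}$ forces the induced velocities to agree across edges, and reality of $\lambda$ keeps every $\cratio^{\dagger}$ on the imaginary axis, so the resulting $\dot z$ is an infinitesimal deformation through circle packings. Finally $\lambda\equiv0$ iff all jumps vanish iff $\Phi$ is constant on the connected $F(G)$, i.e.\ a single global infinitesimal Möbius transformation; equivalently $\dot{\cratio}^{\dagger}\equiv0$, so by Proposition \ref{thm:crpacking} the deformation is Möbius.

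The main obstacle is the doubling of the zero of $v_\Delta$ at $z_i$, namely proving that preservation of the tangency, and not merely of the tangency point, forces $\Delta$ to be nilpotent rather than a general element of $sl(2,\mathbb{C})$ fixing $z_i$. Once parabolicity is in hand, the cross-ratio bookkeeping, the cocycle argument, and the converse are routine.
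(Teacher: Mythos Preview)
Your proposal is correct and follows essentially the same route as the paper. Both defer the parabolicity of $\Phi_{\mathfrak{uvs}}-\Phi_{\mathfrak{vut}}$ to Orick, both identify the scalar by exploiting M\"obius invariance of the cross ratio (the paper replaces $\dot z_m$ by the image $\dot z'_m$ of $z_m$ under $\Phi_{\mathfrak{uvs}}$, you isolate the discrepancy at $z_n$; these are the same trick applied to the two equivalent expressions for $\cratio^{\dagger}$), both obtain the Koebe conditions from the telescoping cocycle around an interior vertex, and both integrate the resulting closed dual $1$-form for the converse. Your added remark that preservation of the \emph{tangency} (not merely the tangency point) is what upgrades the simple zero of $v_\Delta$ at $z_i$ to a double one is a helpful gloss on the step that the paper simply attributes to Orick.
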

\begin{proof}
	Suppose $\Phi$ is given by an infinitesimal deformation of the circle packing, then the form \eqref{eq:orick} follows from Orick \cite[Proposition 2.9.1]{Orick2010}. For any interior vertex $\mathfrak{u} \in V_{int}(G)$, we have
	\[
	\sum_{\mathfrak{v}}\frac{\lambda_{\mathfrak{uv}}}{\omega(e_{\mathfrak{uv}})}\left(\begin{array}{cc}
		z_{i} & -z^2_{i} \\1 & -z_{i}
		\end{array} \right) =\sum_{\mathfrak{v}} (\Phi_{\mathfrak{uvs}} - \Phi_{\mathfrak{vut}})=0 
	\]
	and hence $\lambda$ is a  quadratic differential of Koebe type. Conversely, if $\lambda$ is a quadratic differential of Koebe type, the right hand side of \eqref{eq:orick} defines a closed 1-form on the dual graph. By integration, we obtain $\Phi: F(G) \to sl(2,C)$ which is unique up to a constant. In the notation of Figure \ref{fig:orientation}, we can further define
	\[
	\dot{z}_i :=  \det\left(	\Phi_{\mathfrak{uvs}} \left(\begin{array}{c}
	z_i \\ 1
	\end{array}\right), \left(\begin{array}{c}
	z_i \\ 1
	\end{array}\right) \right) =  \det\left(	\Phi_{\mathfrak{vut}} \left(\begin{array}{c}
	z_i \\ 1
	\end{array}\right), \left(\begin{array}{c}
	z_i \\ 1
	\end{array}\right) \right)
	\]
	since $ \left(\begin{array}{c}
	z_i \\ 1
	\end{array}\right)$ is an eigenvector of $	\Phi_{\mathfrak{uvs}} - 	\Phi_{\mathfrak{vut}}$. It can be verified that $\dot{z}:V(MG) \to \mathbb{C}$ is the change in the tangency points under an infinitesimal deformation of the circle packing. And $\dot{z}$ is unique up to a global infinitesimal M\"{o}bius transformation, which depends on the additive constant from $\Phi$.
	
	It remains to prove that the change in cross ratios is described by $\lambda = \dot{\cratio}^{\dagger}/\cratio^{\dagger}$. Recall that $\Phi_{\mathfrak{vut}}$ maps $z_i,z_m,z_n$ to $\dot{z}_i,\dot{z}_m,\dot{z}_n$ while $\Phi_{\mathfrak{uvs}}$ maps $z_i,z_j,z_k$ to $\dot{z}_i,\dot{z}_j,\dot{z}_k$. We denote $\dot{z}'_m$ the image of $z_m$ under $\Phi_{\mathfrak{uvs}}$. Then
	\begin{align} \label{eq:dercross}
	\begin{split}
	\frac{\dot{\cratio}_{\mathfrak{uv}}^{\dagger}}{\cratio^{\dagger}_{\mathfrak{uv}}} =& \frac{\dot{z}_i -\dot{z}_j}{z_i - z_j} - \frac{\dot{z}_j -\dot{z}_k}{z_j - z_k} + \frac{\dot{z}_k -\dot{z}_m}{z_k - z_m} -\frac{\dot{z}_m -\dot{z}_i}{z_m - z_i} \\
	=& (\frac{\dot{z}_i -\dot{z}_j}{z_i - z_j} - \frac{\dot{z}_j -\dot{z}_k}{z_j - z_k} + \frac{\dot{z}_k -\dot{z}'_m}{z_k - z_m} -\frac{\dot{z}'_m -\dot{z}_i}{z_m - z_i}) + \frac{\dot{z}'_m -\dot{z}_m}{z_k - z_m} -\frac{\dot{z}_m -\dot{z}'_m}{z_m - z_i}  \\
	=& 0 + (\dot{z}'_m -\dot{z}_m) \frac{z_k -z_i}{(z_k - z_m)(z_m-z_i)}
	\end{split}
	\end{align}
	where we used the fact that cross ratios are preserved by global M\"{o}bius transformations. To find $\dot{z}'_m -\dot{z}_m$, we know there is $\mu_m \in \mathbb{C}$ by lemma \ref{lem:infmob} such that
	\begin{align*}
	\left(\begin{array}{c}
	\dot{z}'_m -\dot{z}_m \\ 0
	\end{array}\right) &= 		
	(\Phi_{\mathfrak{uvs}} - \Phi_{\mathfrak{vut}} ) \left(\begin{array}{c}
	z_m \\ 1
	\end{array}\right) - \mu_{m} \left(\begin{array}{c}
	z_m \\ 1
	\end{array}\right) \\
	&= \frac{\lambda_{\mathfrak{uv}}}{\omega(e_{\mathfrak{uv}})} \left( \begin{array}{c}
	z_i(z_m - z_i) \\ (z_m- z_i)
	\end{array} \right) - \mu_{m} \left(\begin{array}{c}
	z_m \\ 1
	\end{array}\right) 
	\end{align*}
	The identity on the second row yields $\mu_m = \frac{\lambda_{\mathfrak{uv}}}{\omega(e_{\mathfrak{uv}})} (z_m-z_i)$ and hence the first row implies
	\[
	\dot{z}'_m -\dot{z}_m = - \frac{\lambda_{\mathfrak{uv}}}{\omega(e_{\mathfrak{uv}})} (z_m-z_i)^2
	\]
	Substituting it into \eqref{eq:dercross}, we deduce that $\dot{\cratio}^{\dagger}/\cratio^{\dagger} = \lambda$. In particular, $\dot{z}$ is generated by a global infinitesimal M\"{o}bius transformation if and only if $ \lambda\equiv 0 $.  
\end{proof}

\section{Discrete minimal surfaces of Koebe Type} \label{sec:kobe}

We show that discrete minimal surfaces of Koebe type can be constructed from holomorphic quadratic differentials of Koebe type via a Weierstrass representation formula. Recall that a Koebe polyhedral surface is a polyhedral surface with edges tangent to the unit sphere. As discussed in the introduction, discrete minimal surfaces of Koebe type are derived from Steiner's formula with vertex normals as a Koebe polyhedral surface. Recall that given a polyhedral surface $f:V \to \mathbb{R}^3$, its area for each face $\phi=(v_1,v_2,\dots,v_n,v_{n+1}=v_1)$ is the magnitude of
\[
\vec{A}(f)_{\phi} = \sum_j f_j \times f_{j+1}.
\] 
Suppose $N:V \to \mathbb{R}^3$ is a vertex normal which defines a family of parallel surfaces $f^t:= f + tN$. Then
\[
\vec{A}(f^t)_{\phi} = \sum_j f_j \times f_{j+1} + t (\sum_j f_j \times N_{j+1} + N_j \times f_{j+1}) + t^2 (\sum_i N_j \times N_{j+1}) .
\]
By comparing to Steiner's formula (Eq. \ref{eq:steiner}), we deduce that the mean curvature $H_{\phi}$ is proportional to
\[
\sum_j f_j \times N_{j+1} + N_j \times f_{j+1}.
\]

\begin{definition}[Bobenko-Pottmann-Wallner \cite{Bobenko2010a}]\label{def:koebemin}
	A polyhedral surface $f:V \to \mathbb{R}^3$ with a Koebe polyhedron $N:V \to \mathbb{R}^3$ as edge offset is a discrete minimal surface of Koebe type if its mean curvature with respect to $N$ defined by Steiner's formula vanishes, i.e. for every face $\phi=(v_1,v_2,\dots,v_n,v_{n+1}=v_1)$
	\[
	\sum_j f_j \times N_{j+1} + N_j \times f_{j+1} =0.
	\]
\end{definition}

On the other hand, it is known that circle packings are closely related to Koebe polyhedral surfaces. The intersection of a Koebe polyhedral surface with the unit sphere yields a circle packing on the sphere. Taking the pole of the faces induces another Koebe polyhedron and the dual circle packing. Conversely, every circle packing $C$ on the sphere that admits a dual circle packing $C^*$ is the intersection of a Koebe polyhedron with the unit sphere. For such a circle packing $C$ with the combinatorics $G$, we denote $N_{C}: F(G) \to \mathbb{R}^3$ the vertices of the Koebe polyhedral surface whose intersection with the sphere is the circle packing $C$ and $N_{C^*}: V(G) \to \mathbb{R}^3$ the dual Koebe polyhedral surface whose intersection with the sphere is the dual circle packing $C^*$. In the following we consider circle packings that correspond to Koebe polyhedral surfaces. This property is always satisfied if $G$ is triangulated. 

\begin{theorem}\label{thm:koebeweierstrass}
	Suppose $C$ is a circle packing with the combinatorics of a simply connected surface $G$ that admits a dual circle packing. We denote $MG$ the medial graph and $z:V(MG) \to \mathbb{C}$ the tangency points of the circles. We assume $\lambda : E_{int}(G) \to \mathbb{R}$ is a discrete holomorphic quadratic differential of Koebe type. Then there exists a realization of the dual mesh $\mathcal{F}: F(G) \to \mathbb{C}^3$ such that
	\begin{align}\label{eq:weierstrass}
	d\mathcal{F}(e_{\mathfrak{uv}}) := \mathcal{F}_{\text{left}(e_{\mathfrak{uv}})}-\mathcal{F}_{\text{right}(e_{\mathfrak{uv}})}= \frac{\lambda_{\mathfrak{uv}}}{\omega(e_{\mathfrak{uv}})} \left( \begin{array}{c}
	1-z_{i}^2 \\ \mathbf{i}(1+z^2_{i}) \\ 2 z_{i} 
	\end{array} \right)
	\end{align}
	where $\text{left}(e_{\mathfrak{uv}})$ and right$(e_{\mathfrak{uv}})$ denote the left and the right face of $e_{\mathfrak{uv}}$ while $z_i$ is the tangency point of circles $C_{\mathfrak{u}}$ and $C_{\mathfrak{v}}$. Furthermore 
	\begin{enumerate}
		\item $\Re \mathcal{F}$ is a discrete minimal surface of Koebe type with vertex normal $N_{C}$ and
		\item $\Im \mathcal{F}$ is a reciprocal parallel mesh of $N_{C^*}$, i.e. $\Im \mathcal{F}$ has corresponding edges parallel to those of $N_{C^*}$ and is defined on the dual graph of $N_{C^*}$.
	\end{enumerate} The corresponding edge lengths of $\Re \mathcal{F}$ and $\Im \mathcal{F}$ are the same.
	
	Conversely, given a discrete minimal surface of Koebe type or a reciprocal parallel mesh of $N_{C^*}$, there exists a unique discrete holomorphic quadratic differential of Koebe type satisfying \eqref{eq:weierstrass}.
\end{theorem}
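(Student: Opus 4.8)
The plan is to read \eqref{eq:weierstrass} as prescribing a $\mathbb{C}^3$-valued one-form on the dual graph and to obtain $\mathcal{F}$ by integration. Writing $\eta(z):=(1-z^2,\mathbf{i}(1+z^2),2z)^{\top}$, the right-hand side of \eqref{eq:weierstrass} is $\frac{\lambda_{\mathfrak{uv}}}{\omega(e_{\mathfrak{uv}})}\eta(z_i)$, a function on oriented dual edges. First I would check that this one-form is closed: its period around the dual face corresponding to an interior vertex $\mathfrak{u}$ is $\sum_{\mathfrak{v}}\frac{\lambda_{\mathfrak{uv}}}{\omega(e_{\mathfrak{uv}})}\eta(z_i)$, and the three components of this vector are exactly $\sum_{\mathfrak v}\frac{\lambda_{\mathfrak{uv}}}{\omega(e_{\mathfrak{uv}})}z_i^m=0$ for $m=0,1,2$, i.e. the defining equations of a holomorphic quadratic differential of Koebe type (Definition \ref{thm:infcrpattern}). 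Since $G$ is simply connected the one-form is exact, so $\mathcal{F}\colon F(G)\to\mathbb{C}^3$ exists and is unique up to a translation; this simultaneously gives existence and the uniqueness asserted in the converse. It is worth noting that under the standard isomorphism $v\colon sl(2,\mathbb{C})\xrightarrow{\ \sim\ }\mathbb{C}^3$, $\begin{pmatrix}a&b\\c&-a\end{pmatrix}\mapsto(b+c,\mathbf{i}(c-b),2a)$, the Orick matrix in \eqref{eq:orick} maps precisely to $\eta(z_i)$, so $\mathcal{F}=v\circ\Phi$ with $\Phi$ the infinitesimal deformation of Proposition \ref{prop:infmobkoebe}.

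Next I would pin down the geometry of the summand. A direct computation gives $\partial_z\sigma=(1+|z|^2)^{-2}\,\overline{\eta(z)}$ for the stereographic map $\sigma$, so $\eta(z_i)$ is an isotropic complex tangent vector to $\mathbb{S}^2$ at $\sigma(z_i)$; since $\eta\cdot\eta=0$, its real and imaginary parts are orthogonal real tangent vectors of equal length. Because $\omega(e_{\mathfrak{uv}})$ is (by the lemma defining $\omega$) the common tangent direction of $C_{\mathfrak{u}},C_{\mathfrak{v}}$ at $z_i$, dividing by $\omega$ rotates the frame so that $\Re\!\big(\eta(z_i)/\omega(e_{\mathfrak{uv}})\big)$ points along the circle tangent on the sphere, which is exactly the edge direction of the Koebe polyhedron $N_C$ at $\sigma(z_i)$, while $\Im\!\big(\eta(z_i)/\omega(e_{\mathfrak{uv}})\big)$ points along the orthogonal direction, the edge of $N_{C^*}$. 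Consequently $\Re\mathcal{F}$ is edge-parallel to $N_C$ (its faces being planar with normal proportional to $N_{C^*}$) and $\Im\mathcal{F}$ is a reciprocal parallel mesh of $N_{C^*}$. The equality of corresponding edge lengths is immediate: the isotropy $(\rho\eta)\cdot(\rho\eta)=0$ of each edge vector $\rho\eta:=\frac{\lambda_{\mathfrak{uv}}}{\omega(e_{\mathfrak{uv}})}\eta(z_i)$ forces $|\Re(\rho\eta)|=|\Im(\rho\eta)|$.

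The crux is the vanishing of the Steiner mean curvature, i.e. $\sum_j f_j\times N_{j+1}+N_j\times f_{j+1}=0$ for $f=\Re\mathcal{F}$, $N=N_C$ on each face (Definition \ref{def:koebemin}). I would first reduce this vector: since $f$ and $N_C$ close up around the face and their corresponding edges are parallel, telescoping gives $\sum_j f_j\times N_{j+1}+N_j\times f_{j+1}=-2\sum_j (f_{j+1}-f_j)\times N_{C,j}$. To evaluate this I would pass through $v$, which intertwines the cross product with the Lie bracket, $v(X)\times v(Y)=\mathbf{i}\,v([X,Y])$; writing $f_j=v(\Psi_j)$ and $N_{C,j}=v(\Theta_j)$ with $\Psi_j=\Re\Phi_{\phi_j}$ and $\Theta_j$ the $su(1,1)$-lift of the Koebe vertices, the mixed-area vector becomes a nonzero multiple of the image under $v$ of the bracket sum $\sum_j[\,\Psi_{j+1}-\Psi_j,\Theta_j\,]$. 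Here $\Psi_{j+1}-\Psi_j$ is the real part of the rank-one nilpotent $\frac{\lambda_{\mathfrak{uv}}}{\omega(e_{\mathfrak{uv}})}\binom{z_i}{1}(1,-z_i)$ read off from \eqref{eq:orick}, and the brackets of such rank-one matrices are explicit and quadratic in the $z_i$; the sum then collapses to combinations of $\sum_{\mathfrak v}\frac{\lambda_{\mathfrak{uv}}}{\omega(e_{\mathfrak{uv}})}z_i^m$ with $m=0,1,2$, which vanish by the holomorphic-quadratic-differential equations. Carrying out this collapse cleanly — in particular organizing the ordered sum around the face so that the three closing relations can be applied — is the main technical obstacle, and it is the step where all three equations of Definition \ref{thm:infcrpattern} are genuinely used (mere edge-parallelism is not enough, as $\Re\mathcal{F}=N_C$ would not be minimal).

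Finally, for the converse I would run the construction backwards. Given a discrete minimal surface of Koebe type with normal $N_C$ (respectively a reciprocal parallel mesh of $N_{C^*}$), each edge is a real multiple of $\Re(\bar\omega\,\eta)$ (respectively $\Im(\bar\omega\,\eta)$) at the corresponding tangency point; reading off this multiple defines $\lambda\colon E_{int}(G)\to\mathbb{R}$ uniquely. The requirement that the surface close up, together with the minimality condition, then translates—via the same identities used above—into the three equations of Definition \ref{thm:infcrpattern}, so $\lambda$ is a holomorphic quadratic differential of Koebe type realizing \eqref{eq:weierstrass}, and uniqueness follows since $\lambda$ is determined edge-by-edge.
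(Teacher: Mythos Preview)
Your treatment of existence of $\mathcal{F}$, of the edge-parallelism of $\Re\mathcal{F}$ with $N_C$ and of $\Im\mathcal{F}$ with $N_{C^*}$, of the equality of edge lengths via isotropy of $\eta(z)$, and of the converse direction, is essentially the paper's argument (phrased through the stereographic Lemma~\ref{lem:sterographic} there). The observation $\mathcal{F}=v\circ\Phi$ is correct and pleasant, though the paper does not use it.

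The place where your proposal diverges from the paper, and where it has a genuine gap, is the vanishing of the mixed area. First a technical slip: $\Re\circ v\neq v\circ\Re$, so $\Psi_j=\Re\Phi_j$ is not the preimage of $f_j$ under $v$; one needs the Hermitian part $\tfrac12(\Phi_j+\Phi_j^{*})$ instead. More seriously, even with the right $\Psi_j$, your claim that $\sum_j[\Psi_{j+1}-\Psi_j,\Theta_j]$ collapses to combinations of $\sum_{\mathfrak v}\tfrac{\lambda_{\mathfrak{uv}}}{\omega(e_{\mathfrak{uv}})}z_i^m$ is not justified: the Koebe vertex $N_{C,j}=v(\Theta_j)$ depends on all three tangency points of the face $j$, not on the single edge, so each bracket mixes edge data with face data and the three closing relations at $\mathfrak u$ are not directly visible in the sum.

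The paper sidesteps this entirely with a short geometric trick. After rewriting the mixed area as $-2\sum_j(\Re\mathcal{F}_{j+1}-\Re\mathcal{F}_j)\times\tfrac{N_{C,j+1}+N_{C,j}}{2}$, it uses that both $\tfrac{N_{C,j+1}+N_{C,j}}{2}$ and the sphere point $\sigma(z_i)$ lie on the Koebe edge $N_{C,j}N_{C,j+1}$; their difference is therefore parallel to $\Re d\mathcal{F}_j$ and drops from the cross product. One is left with $-2\sum_j(\Re d\mathcal{F}_j)\times\sigma(z_i)$, and now the $90^\circ$-rotation identity you already noted, $\sigma(z_i)\times\Re d\mathcal{F}_j=\Im d\mathcal{F}_j$, turns this into $2\sum_j\Im d\mathcal{F}_j=0$ by closure of $\Im\mathcal{F}$ around the face. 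This replaces the face-dependent $\Theta_j$ by the edge-dependent $\sigma(z_i)$ and makes the whole computation two lines; the same identity, read backwards, is exactly what supplies the imaginary-part equation in the converse. I would recommend replacing your bracket computation by this argument.
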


The idea is similar to Proposition \ref{thm:Weierstrass}. We rewrite the equations of the holomorphic quadratic differentials as the closeness of a $\mathbb{C}^3$-valued 1-form. The following two lemmas can be verified directly.      

\begin{lemma}\label{lem:weier}
	\[
	\begin{cases}
	\sum_\mathfrak{v}  \frac{\lambda_{\mathfrak{uv}}}{\omega(e_{\mathfrak{uv}})} &=0 \\
	\sum_\mathfrak{v}  \frac{\lambda_{\mathfrak{uv}}}{\omega(e_{\mathfrak{uv}})} z_{i} &=0 \\
	\sum_\mathfrak{v}  \frac{\lambda_{\mathfrak{uv}}}{\omega(e_{\mathfrak{uv}})} z_{i}^2 &= 0
	\end{cases}
	\iff	
	\sum_\mathfrak{v}  \frac{\lambda_{\mathfrak{uv}}}{\omega(e_{\mathfrak{uv}})} \left( \begin{array}{c}
	1-z_{i}^2 \\ \mathbf{i}(1+z^2_{i}) \\ 2 z_{i} 
	\end{array} \right) =0
	\]
\end{lemma}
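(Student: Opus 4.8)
The plan is to recognize Lemma~\ref{lem:weier} as nothing more than an invertible linear change of coordinates on three scalar quantities, so that no geometric input is required. Fix an interior vertex $\mathfrak{u}$ and abbreviate the three sums on the left-hand side by
\[
S_0 = \sum_\mathfrak{v}\frac{\lambda_{\mathfrak{uv}}}{\omega(e_{\mathfrak{uv}})},\qquad
S_1 = \sum_\mathfrak{v}\frac{\lambda_{\mathfrak{uv}}}{\omega(e_{\mathfrak{uv}})}\,z_i,\qquad
S_2 = \sum_\mathfrak{v}\frac{\lambda_{\mathfrak{uv}}}{\omega(e_{\mathfrak{uv}})}\,z_i^2,
\]
where $z_i$ is the tangency point attached to the oriented edge $e_{\mathfrak{uv}}$ and hence varies with the summation index $\mathfrak{v}$. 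The left-hand system is precisely $S_0=S_1=S_2=0$.

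First I would observe that each entry of the column vector $(1-z_i^2,\ \mathbf{i}(1+z_i^2),\ 2z_i)^{\mathsf T}$ is a fixed (edge-independent) linear combination of the monomials $1,\,z_i,\,z_i^2$. Weighting by $\lambda_{\mathfrak{uv}}/\omega(e_{\mathfrak{uv}})$ and summing over $\mathfrak{v}$, these constant coefficients come out of the sum, so the right-hand vector equation becomes $M\,(S_0,S_1,S_2)^{\mathsf T}=0$ with the constant matrix $M=\begin{pmatrix} 1 & 0 & -1 \\ \mathbf{i} & 0 & \mathbf{i} \\ 0 & 2 & 0 \end{pmatrix}$. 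It then remains only to check that $M$ is invertible: expanding along the middle column gives $\det M = -4\mathbf{i}\neq 0$. Consequently $M\,(S_0,S_1,S_2)^{\mathsf T}=0$ holds if and only if $(S_0,S_1,S_2)=0$, which is exactly the left-hand system, establishing the equivalence in both directions simultaneously.

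Concretely, the forward implication is immediate, since each vector entry is a combination of the three vanishing sums; the reverse implication follows by reading off $2S_1=0$ from the third row together with $S_0-S_2=0$ and $S_0+S_2=0$ from the first two rows, which force $S_0=S_1=S_2=0$. There is no genuine obstacle: this is a routine verification, exactly as the surrounding text advertises (\emph{``can be verified directly''}). The only points meriting care are that the coefficient matrix $M$ is independent of the edge, so that it may legitimately be extracted from the sum, and that it is nonsingular; the latter is the discrete reflection of the fact that $(1-z^2,\ \mathbf{i}(1+z^2),\ 2z)$ is, up to the conformal factor, the inverse stereographic parametrization, whose three components are linearly independent quadratic polynomials in $z$.
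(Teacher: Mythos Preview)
Your proof is correct and is exactly the direct verification the paper has in mind; the paper itself gives no argument beyond the remark that the lemma ``can be verified directly.'' Your rephrasing of the vector equation as $M(S_0,S_1,S_2)^{\mathsf T}=0$ with $\det M=-4\mathbf{i}\neq 0$ is the cleanest way to record this, and the computation is accurate.
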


\begin{lemma}\label{lem:sterographic}
	Let $\sigma: \mathbb{C} \to \mathbb{S}^2 \subset \mathbb{R}^3$ be the stereographic projection:
	\[
	\sigma(z) = \frac{1}{1+|z|^2} \left( \begin{array}{c} 2 \Re z \\ 2 \Im z \\ |z|^2 - 1
	\end{array} \right).
	\]
	Then for any $v \in T_z \mathbb{C}$
	\[
	d\sigma_z (v) = \frac{2 |v|^2 }{(1+ |z|^2)^2} \Re(  \frac{1}{v} \left( \begin{array}{c}
	1-z^2 \\ \mathbf{i}(1+z^2) \\ 2 z
	\end{array} \right) )  \in T_{\sigma(z)} \mathbb{S}^2  \subset \mathbb{R}^3
	\]
\end{lemma}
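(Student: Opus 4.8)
The plan is to compute the differential directly by Wirtinger calculus, identifying $T_z\mathbb{C}$ with $\mathbb{C}$ so that the tangent vector is a complex number $v$. Since each of the three components of $\sigma$ is a \emph{real}-valued function of $z$ and $\bar z$, the Wirtinger derivatives satisfy $\sigma_{\bar z}=\overline{\sigma_z}$, and hence
\[
d\sigma_z(v)=\sigma_z\,v+\sigma_{\bar z}\,\bar v=\sigma_z\,v+\overline{\sigma_z\,v}=2\Re(\sigma_z\,v),
\]
where $\sigma_z:=\partial\sigma/\partial z$. Thus the entire computation reduces to evaluating the single complex-vector derivative $\sigma_z$.

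First I would rewrite $\sigma$ in terms of $z$ and $\bar z$, using $2\Re z=z+\bar z$, $2\Im z=-\mathbf{i}(z-\bar z)$ and $|z|^2=z\bar z$, so that with $D:=1+z\bar z$ one has $\partial_z D=\bar z$. Differentiating each component by the quotient rule and simplifying with $D=1+z\bar z$ gives
\[
\sigma_z=\frac{1}{D^2}\left(\begin{array}{c}1-\bar z^2\\ -\mathbf{i}(1+\bar z^2)\\ 2\bar z\end{array}\right).
\]
Each simplification is a one-line cancellation; for instance the first numerator is $D-(z+\bar z)\bar z=1-\bar z^2$, and the third is $\bar z D-(z\bar z-1)\bar z=2\bar z$. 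This step is purely mechanical.

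It then remains to match this with the stated right-hand side. Writing $w:=(1-z^2,\ \mathbf{i}(1+z^2),\ 2z)^{\mathsf T}$, one checks componentwise that the vector appearing in $\sigma_z$ is exactly $\bar w$, so that $d\sigma_z(v)=2\Re(\sigma_z v)=\tfrac{2}{D^2}\Re(v\,\bar w)$. The only genuinely non-mechanical point is reconciling this with the form in the statement: using that $\Re$ is invariant under conjugation, $\Re(v\bar w)=\Re(\overline{v\bar w})=\Re(\bar v\,w)$, and inserting $\bar v=|v|^2/v$ yields
\[
d\sigma_z(v)=\frac{2|v|^2}{D^2}\Re\!\Big(\tfrac{1}{v}\,w\Big),
\]
which is the claimed expression.

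Finally, the assertion $d\sigma_z(v)\in T_{\sigma(z)}\mathbb{S}^2$ requires no extra computation: a direct check gives $|\sigma(z)|^2=\frac{4|z|^2+(|z|^2-1)^2}{(1+|z|^2)^2}=1$, so $\sigma$ lands in the unit sphere, and differentiating $\langle\sigma,\sigma\rangle\equiv 1$ forces $\langle d\sigma_z(v),\sigma(z)\rangle=0$. I expect no real obstacle here, as the lemma is a verification; the one place demanding care is tracking which expressions carry $z$ versus $\bar z$ and correctly performing the interchange between $1/v$ and $\bar v$ via $\bar v=|v|^2/v$.
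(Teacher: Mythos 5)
Your proof is correct: the Wirtinger computation of $\sigma_z$, the identity $d\sigma_z(v)=2\Re(\sigma_z v)$ for real-valued components, and the conversion $\Re(v\,\bar w)=|v|^2\,\Re(w/v)$ all check out, as does the tangency claim. The paper gives no written proof (it states the lemma "can be verified directly"), and your argument is precisely such a direct verification, so there is no divergence to report.
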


\begin{proof}[Proof of Theorem \ref{thm:koebeweierstrass}]
	By Lemma \ref{lem:weier}, the right hand side of Eq. \eqref{eq:weierstrass} defines a closed dual 1-form on $G$. Since $G$ is simply connected, the 1-form is exact and hence $\mathcal{F}: F(G) \to \mathbb{C}^3$ exists. 
	
	We then examine $\Re \mathcal{F}$ and $\Im \mathcal{F}$ respectively. Lemma \ref{lem:sterographic} implies that $\Re  d\mathcal{F}(e_{\mathfrak{uv}})$ is parallel to $d\sigma_z (\frac{\omega(e_{\mathfrak{uv}})}{\lambda_{\mathfrak{uv}}})$. Because $\lambda_{\mathfrak{uv}}$ is real and $\omega(e_{\mathfrak{uv}})$ is tangent to $C_\mathfrak{u}$ and $C_\mathfrak{v}$ at the tangency point $z_i$, it yields that $\Re  d\mathcal{F}(e_{\mathfrak{uv}})$ is tangent to $\sigma( C_\mathfrak{u})$ and $\sigma(C_\mathfrak{v})$ at $\sigma(z_i)$. We thus deduce that $N_C$ is parallel to $\Re \mathcal{F}$:
	\[
	\Re (\mathcal{F}_{\text{left}(e_{\mathfrak{uv}})}-\mathcal{F}_{\text{right}(e_{\mathfrak{uv}})}) \parallel (N_{C,\text{left}(e_{\mathfrak{uv}})}-N_{C,\text{right}(e_{\mathfrak{uv}})}).
	\]
	 On the other hand $\mathbf{i} \omega(e_{\mathfrak{uv}})$ is tangent to both $C_{ijk}$ and $C_{imn}$. Lemma \ref{lem:sterographic} again leads to
	 \[
	 \Im (\mathcal{F}_{\text{left}(e_{\mathfrak{uv}})}-\mathcal{F}_{\text{right}(e_{\mathfrak{uv}})}) \parallel (N_{C^*,\mathfrak{v}}-N_{C^*,\mathfrak{u}}).
	 \]
	 Hence $\Im \mathcal{F}$ is a so-called reciprocal parallel mesh of $N_{C^*}$ because their combinatorics are dual to each other and the corresponding edges are parallel. 
	
	Furthermore, every edge of $\Re \mathcal{F}$ differs from that of $\Im \mathcal{F}$ by a 90-degree rotation:
	\[
	\Im  d\mathcal{F}(e_{\mathfrak{uv}})= \Re( \frac{\lambda_{\mathfrak{uv}}}{\mathbf{i} \omega(e_{\mathfrak{uv}})} \left( \begin{array}{c}
	1-z_{i}^2 \\ \mathbf{i} (1+z^2_{i}) \\ 2 z_{i} 
	\end{array} \right) ) = \sigma(z_i) \times \Re  d\mathcal{F}(e_{\mathfrak{uv}})
	\]
	since the stereographic projection $\sigma$ is conformal and $| \sigma(z_i) |=1$.
	
	 It remains to show that $\Re \mathcal{F}$ has vanishing mean curvature in order to be a discrete minimal surface of Koebe type (Definition \ref{def:koebemin}). We consider a face $\phi=(v_1,v_2,\dots,v_n,v_{n+1}=v_1)$ of $\Re \mathcal{F}$. We denote $\sigma(z_{jj+1})$ the point where the line $N_{C,j} N_{C,j+1}$ touches at the sphere. Then its mean curvature defined by Steiner's formula is proportional to 
	\begin{align}\label{eq:mixedarea}
	\begin{split}
	\sum_j \Re \mathcal{F}_j \times N_{C,j+1} + N_{C,j} \times \Re \mathcal{F}_{j+1}  =& -2\sum_j (\Re \mathcal{F}_{j+1}-\Re \mathcal{F}_{j}) \times \frac{ N_{C,j+1} + N_{C,j}}{2} \\
	=& -2\sum_j (\Re \mathcal{F}_{j+1}-\Re \mathcal{F}_{j}) \times \sigma(z_{jj+1}) \\
	=& 2\sum_j \Im \mathcal{F}_{j+1}-\Im \mathcal{F}_{j}\\
	=& 0
	\end{split}
	\end{align}
	since both  $(N_{C,j+1}+ N_{C,j})/2$ and $\sigma(z_{jj+1})$ lie on the line $N_{C,j} N_{C,j+1}$ and  hence $(N_{C,j+1}+ N_{C,j})/2 - \sigma(z_{jj+1})$ is parallel to $\Re \mathcal{F}_{j+1}-\Re \mathcal{F}_{j}$. 
	
	The converse argument is similar to the proof of Proposition \ref{thm:Weierstrass} as in \cite{Lam2016}. To see this, suppose $f$ is a discrete minimal surface of Koebe type with vertex normal $N$. Since $N$ is a Koebe polyhedral surface, its intersection with the sphere yields a circle packing with the combinatorics $G$ (which admits a dual circle packing). We consider the stereographic projection of the circle packing in the plane  and denote $z:V(MG)\to \mathbb{C}$ the tangency points. 
	
	We claim that there exists $\mathcal{F}: F(G) \to \mathbb{C}^3$ in the form of \eqref{eq:weierstrass} for some $\lambda:E_{int} \to \mathbb{R}$ such that $f= \Re \mathcal{F}$ and hence $\lambda$ is the desired holomorphic quadratic differential of Koebe type. Pick an edge $\{\mathfrak{uv}\} \in E(G)$ and write $z_i$ the tangency point of $C_{\mathfrak{u}}$ and $C_{\mathfrak{v}}$. Notice that $f:F(G) \to \mathbb{R}^3$ is parallel to $N=N_C$ by definition and hence parallel to $d\sigma_{z_i}(\omega(e_{\mathfrak{uv}}))$ by Lemma \ref{lem:sterographic}. We deduce that there exists $\lambda:E_{int} \to \mathbb{R}$ such that
	\[
	f_{\text{left}(e_{\mathfrak{uv}})}-f_{\text{right}(e_{\mathfrak{uv}})}= \Re( \frac{\lambda_{\mathfrak{uv}}}{\omega(e_{\mathfrak{uv}})} \left( \begin{array}{c}
	1-z_{i}^2 \\ \mathbf{i}(1+z^2_{i}) \\ 2 z_{i} 
	\end{array} \right))
	\]
	In particular, for every $\mathfrak{u} \in V(G)$
	\[
	\Re( \sum_{\mathfrak{v}}  \frac{\lambda_{\mathfrak{uv}}}{\omega(e_{\mathfrak{uv}})} \left( \begin{array}{c}
	1-z_{i}^2 \\ \mathbf{i}(1+z^2_{i}) \\ 2 z_{i} 
	\end{array} \right)) = \sum_{\mathfrak{v}} (f_{\text{left}(e_{\mathfrak{uv}})}-f_{\text{right}(e_{\mathfrak{uv}})})=0
	\]
	Furthermore, reversing the argument in \eqref{eq:mixedarea} yields
		\begin{align*}
				\Im (\sum_{\mathfrak{v}}  \frac{\lambda_{\mathfrak{uv}}}{\omega(e_{\mathfrak{uv}})} \left( \begin{array}{c}
			1-z_{i}^2 \\ \mathbf{i}(1+z^2_{i}) \\ 2 z_{i} 
			\end{array} \right)) &=  \sum_{\mathfrak{v}} \sigma(z_i) \times (f_{\text{left}(e_{\mathfrak{uv}})}-f_{\text{right}(e_{\mathfrak{uv}})}) \\
			&= \sum_{\mathfrak{v}} \frac{(N_{\text{left}(e_{\mathfrak{uv}})}+N_{\text{right}(e_{\mathfrak{uv}})})}{2} \times (f_{\text{left}(e_{\mathfrak{uv}})}-f_{\text{right}(e_{\mathfrak{uv}})})\\
			&=0
		\end{align*}
		since $f$ has vanishing mean curvature over each face. Thus for every $\mathfrak{u} \in V(G)$
		\[
		\sum_{\mathfrak{v}}  \frac{\lambda_{\mathfrak{uv}}}{\omega(e_{\mathfrak{uv}})} \left( \begin{array}{c}
		1-z_{i}^2 \\ \mathbf{i}(1+z^2_{i}) \\ 2 z_{i} 
		\end{array} \right))  =0
		\]
		and hence $\lambda$ is a discrete holomorphic quadratic differential of Koebe type by Lemma \ref{lem:weier}.
		
\end{proof}

\begin{figure} 
	\includegraphics[width=0.5\textwidth]{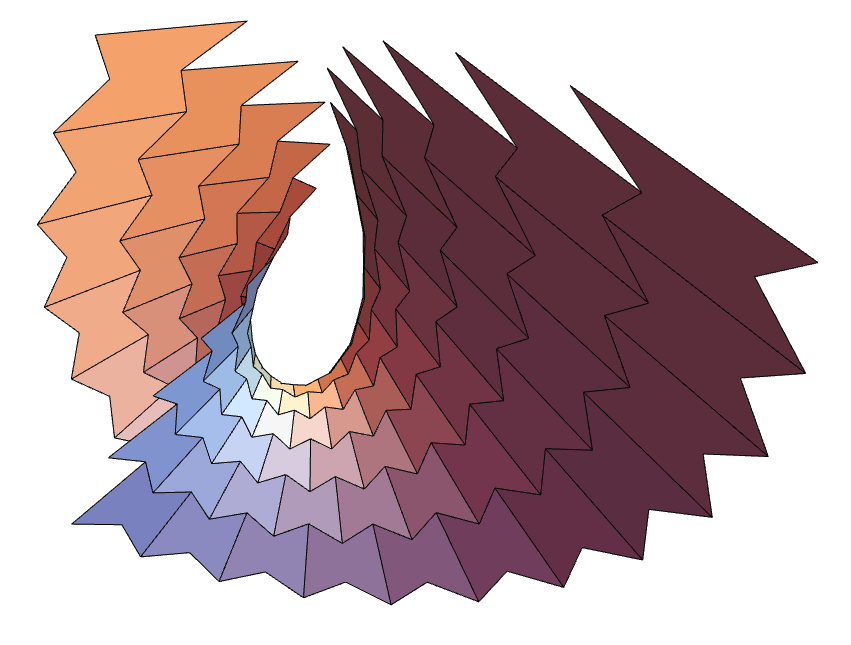}
	\caption{A discrete minimal surface of Koebe type from an infinitesimal deformation of the hexagonal circle packing}
	\label{fig:enneper}
\end{figure}

\section{From Kobe type to general type} \label{sec:unif}

Infinitesimal deformations of circle packings yield two types of holomorphic quadratic differentials, which lead to two kinds of discrete minimal surfaces. In this section, we prove the main theorem that a discrete minimal surface of Koebe type can be extended to a minimal surface of general type naturally and the extension depends only on the triangulation of the medial graph. The proof is straightforward using the $\mbox{sl}(2,\mathbb{C})$-representation for infinitesimal deformations of circle packings in Proposition \ref{prop:infmobgen} and Theorem \ref{prop:infmobkoebe}.

\begin{theorem}\label{thm:kobegeneral}
	Suppose $z:V(MG) \to \mathbb{C}$ is the tangency points of a circle packing with the combinatorics of a triangulated disk $G$ and $TMG$ is a triangulation of the medial graph $MG$ with $V(TMG)=V(MG)$. Then there is a one-to-one correspondence between holomorphic quadratic differentials of Koebe type $\lambda:E_{int}(G)\to \mathbb{R}$ and those of the general type $q:E_{int}(TMG) \to \mathbb{R}$.
	
	Furthermore, the holomorphic quadratic differentials respectively define realizations $\mathcal{F}: F(G) \to \mathbb{C}^3$ and $\hat{\mathcal{F}}:F(TMG) \to \mathbb{C}^3$ via the Weierstrass representation formulas (Proposition \ref{thm:Weierstrass} and Theorem \ref{thm:koebeweierstrass}). We have
	\[
	\hat{\mathcal{F}}|_{F(G)} = \mathcal{F}
	\]
	up to a translation. 
\end{theorem}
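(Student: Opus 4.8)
The plan is to run the two $sl(2,\mathbb{C})$-representations of Propositions \ref{prop:infmobkoebe} and \ref{prop:infmobgen} in parallel, using the observation that \emph{both} Weierstrass formulas are the image, under one fixed linear isomorphism, of the coboundary of an $sl(2,\mathbb{C})$-valued field. Define $\psi: sl(2,\mathbb{C}) \to \mathbb{C}^3$ by
\[
\psi\left(\begin{array}{cc} a & b \\ c & -a \end{array}\right) = \left(\begin{array}{c} b+c \\ \mathbf{i}(c-b) \\ 2a \end{array}\right).
\]
A one-line check shows that $\psi$ carries the Koebe matrix of \eqref{eq:orick} to the Weierstrass vector of \eqref{eq:weierstrass}, and the trace-free matrix of Proposition \ref{prop:infmobgen} to the vector of \eqref{eq:eta}. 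Writing $\Phi: F(G)\to sl(2,\mathbb{C})$ and $\hat{\Phi}: F(TMG)\to sl(2,\mathbb{C})$ for the fields attached to $\lambda$ and $q$, this means $d\mathcal{F} = \psi\circ d\Phi$ and $d\hat{\mathcal{F}} = \psi\circ d\hat{\Phi}$ as $\mathbb{C}^3$-valued dual $1$-forms; since $\psi$ is linear and $G$, $TMG$ are simply connected, we get $\mathcal{F} = \psi\circ\Phi + c$ and $\hat{\mathcal{F}} = \psi\circ\hat{\Phi} + \hat{c}$ for constants $c,\hat{c}\in\mathbb{C}^3$.

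First I would establish the bijection $\lambda\leftrightarrow q$ by showing it is induced by a single infinitesimal motion $\dot z: V(MG)\to\mathbb{C}$. Given $\lambda$, Proposition \ref{prop:infmobkoebe} yields such a $\dot z$ deforming the circle packing, unique up to an infinitesimal M\"{o}bius transformation; as three mutually tangent circles always carry a unique orthogonal circle through their tangency points, the deformation keeps every dual circle orthogonal to the primal circles, so it preserves the intersection angles of the induced pattern and $q := \dot{\cratio}/\cratio$ is a holomorphic quadratic differential of general type by Proposition \ref{prop:infmobgen}. Conversely, from $q$ one obtains an angle-preserving $\dot z$; the diagonal edges of $TMG$ interior to a vertex-face carry $\Arg\cratio = \pi$, so preserving this argument keeps the tangency points on each primal circle $C_{\mathfrak{u}}$ concyclic (hence $C_{\mathfrak{u}}$ remains a circle), while the $MG$-edges carry $\Arg\cratio = \pi/2$, keeping each dual circle orthogonal to the two primal circles through the tangency point it contains. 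Two circles through a common point, both orthogonal there to a third circle through that point, must share their tangent line, hence stay tangent; so $\dot z$ is genuinely a circle-packing deformation and returns $\lambda := \dot{\cratio}^{\dagger}/\cratio^{\dagger}$. The two assignments are inverse because each factors through the same $\dot z$ modulo M\"{o}bius.

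The decisive step is that $\Phi$ and $\hat{\Phi}$ agree on the faces shared by $G$ and $TMG$. A triangular interstitial face of $MG$ is at once a face $\{\mathfrak{uvs}\}\in F(G)$ and a triangle $\{ijk\}\in F(TMG)$ whose vertices are the tangency points $z_i,z_j,z_k$. By construction $\Phi_{\mathfrak{uvs}}$ and $\hat{\Phi}_{ijk}$ are both the element of $sl(2,\mathbb{C})$ assigned by Lemma \ref{lem:infmob} to the \emph{same} input $(z_i,z_j,z_k)$ and $(\dot z_i,\dot z_j,\dot z_k)$, so they coincide by the uniqueness in that lemma. Restricting to $F(G)\subset F(TMG)$ then gives
\[
\hat{\mathcal{F}}\big|_{F(G)} = \psi\circ\hat{\Phi}\big|_{F(G)} + \hat{c} = \psi\circ\Phi + \hat{c} = \mathcal{F} + (\hat{c}-c),
\]
which is the claimed equality up to the translation $\hat{c}-c$. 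I expect the main obstacle to be the bookkeeping: confirming that the $\dot z$ produced from the packing side and from the pattern side is literally the same function on $V(MG)$, so that Lemma \ref{lem:infmob} receives identical data on each shared triangle, and carefully tracking the $\Arg\cratio\in\{\pi/2,\pi\}$ dichotomy that makes the packing and pattern deformations equivalent. The isomorphism $\psi$ and the exactness of the $1$-forms are then routine.
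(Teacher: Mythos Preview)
Your proposal is correct and follows essentially the same route as the paper: both establish the bijection $\lambda\leftrightarrow\dot z\leftrightarrow q$ via Propositions \ref{prop:infmobkoebe} and \ref{prop:infmobgen}, then use the uniqueness in Lemma \ref{lem:infmob} to identify $\hat{\Phi}|_{F(G)}=\Phi$ on the shared interstitial triangles, and deduce that the two Weierstrass integrals agree up to translation. Your linear isomorphism $\psi$ is a tidy way to package the last step, replacing the paper's explicit edge-by-edge comparison $\mathcal{F}_{\mathfrak{uvs}}-\mathcal{F}_{\mathfrak{vut}}=\hat{\mathcal{F}}_{ijk}-\hat{\mathcal{F}}_{imn}$ with the single observation $\mathcal{F}=\psi\circ\Phi+c$, $\hat{\mathcal{F}}=\psi\circ\hat{\Phi}+\hat c$.
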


\begin{proof}
	Every infinitesimal deformation of a circle packing induces a vector field on the tangency points $\dot{z}:V(MG)\to \mathbb{C}$. Lemma \ref{lem:infmob} then implies that there are unqiue $\Phi:F(G) \to sl(2,\mathbb{C})$ and $\tilde{\Phi}: F(TMG) \to sl(2,\mathbb{C})$ that describe infinitesimal M\"{o}bius transformations over each face. Proposition \ref{prop:infmobgen} and Proposition \ref{prop:infmobkoebe} further yields that $\Phi$ and $\tilde{\Phi}$ correspond to holomorphic quadratic differential of Koebe type $\lambda$ and that of general type $q$ respectively:
	\[
	\lambda \xleftrightarrow{\text{Proposition \ref{prop:infmobkoebe}}} \dot{z} \xleftrightarrow{\text{Proposition \ref{prop:infmobgen}}} q
	\]
	
	Secondly, in the notation of Figure \ref{fig:orientation} left, a face $\{\mathfrak{uvs}\}\in F(G)$ has tangency points $z_{i}\in C_{\mathfrak{u}} \cap C_{\mathfrak{v}}$, $z_{j}\in C_{\mathfrak{v}} \cap C_{\mathfrak{s}}$, $z_{k}\in C_{\mathfrak{s}} \cap C_{\mathfrak{u}}$ which form a face $\{ijk\} \in F(TMG)$ on the other hand. It yields an injection $F(G) \hookrightarrow F(TMG)$. By construction,
	\[
	\hat{\Phi}_{ijk} = \Phi_{\mathfrak{uvs}}
	\]
	and we have $\hat{\Phi}|_{F(G)} = \Phi$. 
	
	We consider an interior edge $\{\mathfrak{uv}\} \in E(G)$ and the two neighboring triangles $\{\mathfrak{uvs}\}$, $\{\mathfrak{vut}\} \in F(G)$ (see Figure \ref{fig:orientation} left). The five tangency points are denoted by $z_i,z_j,z_k,z_m,z_n$. We write the other points of tangency on $C_\mathfrak{u}$ connecting to $z_i$ as $z_n=z_1,z_2,\dots z_p=z_j$, i.e. $\{i1\},\{i2\}\dots\{ip\} \in E(TMG)$. Then
	\[
	\frac{\lambda_{\mathfrak{uv}}}{\omega(e_{\mathfrak{uv}})}\left(\begin{array}{cc}
	z_{i} & -z^2_{i} \\1 & -z_{i}
	\end{array} \right) = \Phi_{\mathfrak{uvs}} - \Phi_{\mathfrak{vut}}= \hat{\Phi}_{ijk} - \hat{\Phi}_{imn}= 
\sum_{r=1}^{p}\frac{q_{ij}}{z_r - z_i} \left( \begin{array}{cc}
	\frac{z_i + z_r}{2} & -z_i z_r \\
	1  & \frac{z_i + z_r}{2} 
	\end{array} \right) 
	\]
	
	It implies
	\[
	\mathcal{F}_{\mathfrak{uvs}} - \mathcal{F}_{\mathfrak{vut}} =  \frac{\lambda_{\mathfrak{uv}}}{\omega(e_{\mathfrak{uv}})} \left( \begin{array}{c}
	1-z_{i}^2 \\ \mathbf{i}(1+z^2_{i}) \\ 2 z_{i} 
	\end{array} \right)  = \sum_{r=1}^{p} \frac{q_{ir}}{z_r - z_i} \left( \begin{array}{c} 1-z_iz_r \\ \mathbf{i} (1+z_iz_r) \\ z_i+z_r \end{array} \right) = \hat{\mathcal{F}}_{ijk} - \hat{\mathcal{F}}_{imn}. 
	\]
	Hence we deduce that
	\[
	\hat{\mathcal{F}}|_{F(G)} = \mathcal{F}
	\]
	up to translation.
\end{proof}

\section{Discrete harmonic functions} \label{sec:har}

In this section, we divert our attention from discrete minimal surfaces to another equivalent parametrization of circle packings: vertex rotation. Using this approach, infinitesimal deformations of a circle packing correspond to discrete harmonic functions with respect to the cotangent Laplacian. We discuss its relation to the known approach of discrete harmonic functions using the change in radii of circles. 

\subsection{Vertex rotation} Luo \cite{Luo2004} introduced a notion of discrete conformality based on the edge lengths of a triangle mesh -- vertex scaling. Motivated by his approach, we introduce vertex rotation to describe deformations of circle packings. Vertex rotation was first considered on circle patterns where neighboring circles intersect \cite{Lam2015a}. 

\begin{definition}
	Two triangle meshes $z,\tilde{z}:V \to \mathbb{C}$ differ by vertex rotation if there exists $\alpha:V \to \mathbb{R}$ such that
	\begin{align*}
	\frac{\tilde{z}_j - \tilde{z}_i}{|\tilde{z}_j - \tilde{z}_i|} = e^{i(\alpha_i + \alpha_j)  }  \frac{z_j - z_i}{|z_j - z_i|}
	\end{align*}
	for every interior edge $\{ij\}$.
\end{definition}
With an analogue of Ptolemy's theorem, we extend vertex rotation to medial graphs induced from circle packings.

\begin{lemma}\label{lem:sixthpair}
	Let $z_1,z_2,z_3,z_4$ and $\tilde{z}_1,\tilde{z}_2,\tilde{z}_3,\tilde{z}_4$ be two collections of con-cyclic points in the plane $\mathbb{C}$. Suppose $\alpha_1,\alpha_2,\alpha_3,\alpha_4 \in \mathbb{R}$ satisfy
	\begin{align}\label{eq:lemrot}
	\frac{\tilde{z}_j - \tilde{z}_i}{|\tilde{z}_j - \tilde{z}_i|} = e^{i(\alpha_i + \alpha_j)  }  \frac{z_j - z_i}{|z_j - z_i|}
	\end{align}
	for any five pairs of points $i\neq j$. Then it holds as well for the remaining pair. 
\end{lemma}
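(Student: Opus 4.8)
\emph{The plan is to} rephrase the rotation condition in terms of the unit tangent vectors $u_{ij} := (z_j - z_i)/|z_j - z_i|$ and $\tilde u_{ij}$, so that Eq.~\eqref{eq:lemrot} reads $\tilde u_{ij} = e^{\mathbf{i}(\alpha_i + \alpha_j)} u_{ij}$. This is manifestly symmetric in $i,j$ and satisfies $u_{ji} = -u_{ij}$, so the statement is well posed for the six unordered pairs. By relabelling I may assume the missing pair is $\{1,2\}$ and that the condition holds for $\{1,3\},\{1,4\},\{2,3\},\{2,4\},\{3,4\}$.

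First I would exploit the \emph{algebraic Ptolemy identity}, valid for arbitrary complex numbers,
\begin{equation*}
(z_1 - z_2)(z_3 - z_4) + (z_2 - z_3)(z_1 - z_4) + (z_3 - z_1)(z_2 - z_4) = 0,
\end{equation*}
together with concyclicity: each pairwise ratio of the three summands $A,B,C$ equals, up to sign, a cross ratio of the $z_k$ and is therefore real, so $A,B,C$ are real multiples of one common unit vector. The decisive point is that $B=(z_2-z_3)(z_1-z_4)$ and $C=(z_3-z_1)(z_2-z_4)$ involve only the five known edges. Applying the rotation condition to each factor, their directions rotate by the \emph{same} angle $\sigma = \sum_k \alpha_k$, with the exact sign preserved since the edge lengths contribute only positive factors; that is, $\tilde B/|\tilde B| = e^{\mathbf{i}\sigma}\,B/|B|$ and $\tilde C/|\tilde C| = e^{\mathbf{i}\sigma}\,C/|C|$.

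Next, the same identity applied to the tilded points gives $\tilde A = -\tilde B - \tilde C$, which therefore lies on the line spanned by $e^{\mathbf{i}\sigma}$ times the common direction of $A,B,C$. Dividing out the known factor $\tilde z_3 - \tilde z_4$ (whose direction is $e^{\mathbf{i}(\alpha_3+\alpha_4)}$ times that of $z_3-z_4$) then forces
\begin{equation*}
\tilde u_{12} = \pm\, e^{\mathbf{i}(\alpha_1+\alpha_2)}\, u_{12},
\end{equation*}
the desired relation up to an undetermined sign.

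\emph{The main obstacle is to rule out the minus sign.} This sign records whether $(\tilde A,\tilde B,\tilde C)$ and $(A,B,C)$ induce the same orientation of their common line, equivalently whether the cross ratio $\cratio(z_1,z_4,z_2,z_3)$ — built from the missing edge $\{1,2\}$ together with three known edges — has the same sign as its tilded counterpart. Being real, this cross ratio has a sign that is a purely combinatorial invariant of the cyclic order of four concyclic points. Since $z$ and $\tilde z$ are corresponding points of a deformation carrying one circle packing into another, they share the cyclic order along each circle (a fact that may be incorporated into the concyclicity hypothesis), so the two cross ratios agree in sign and the plus sign is selected. I would close by noting that, alternatively, writing $z_k = e^{2\mathbf{i}\phi_k}$ reduces the argument to the observation that $\{0,\pi\}$ is a subgroup of $\mathbb{R}/2\pi\mathbb{Z}$ and that $\delta_1+\delta_2 = (\delta_1+\delta_3)+(\delta_2+\delta_4)-(\delta_3+\delta_4)$ for $\delta_k := \tilde\phi_k - \phi_k - \alpha_k$, which reproduces both the relation and precisely the same sign subtlety.
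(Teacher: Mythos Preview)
Your Ptolemy-identity approach is essentially the paper's own argument in different packaging. The paper uses directly that $\dfrac{(z_3-z_1)(z_4-z_2)}{(z_2-z_1)(z_4-z_3)}$ is real for concyclic points, asserts $u_{13}u_{24}=u_{12}u_{34}$ (and the same for $\tilde z$), and then cancels the known unit factors; your three collinear Ptolemy terms $A,B,C$ encode exactly these real cross ratios.

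You are right to flag the sign --- more right, in fact, than the paper, which silently drops it. The displayed identity in the paper should carry a $\pm$, and this is not cosmetic: as literally stated the lemma is \emph{false}. Take
\[
(z_1,z_2,z_3,z_4)=(1,\,i,\,-i,\,-1),\qquad (\tilde z_1,\tilde z_2,\tilde z_3,\tilde z_4)=(1,\,-1,\,i,\,-i),
\]
both concyclic; with $\alpha_1=\tfrac{\pi}{2}$, $\alpha_2=\alpha_4=-\tfrac{\pi}{4}$, $\alpha_3=\pi$ one checks that the vertex-rotation relation holds for the five pairs $12,13,14,23,34$ but fails by exactly a sign for the pair $24$. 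The obstruction is precisely what you isolate: the relevant real cross ratio equals $-1$ for $z$ and $+\tfrac12$ for $\tilde z$, reflecting different cyclic orders on the circle. Your proposed fix --- assume $z$ and $\tilde z$ share the same cyclic order, which is automatic in the circle-packing application of Proposition~\ref{thm:verrotcircle} --- is exactly what is needed, and with that hypothesis either your Ptolemy argument or the paper's cross-ratio argument goes through cleanly. Your closing $z_k=e^{2\mathbf{i}\phi_k}$ reformulation is a nice way to see both the identity and the sign obstruction at once.
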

\begin{proof}
	We assume Eq.\eqref{eq:lemrot} holds for pairs of points: $12,23,34,13,14$. We show that it holds for the pair $24$ as well. Since $z_i$'s are con-cyclic, their cross ratios are real. The sign of the cross ratio changes if we permute two neighboring vertices. Hence
	\begin{align*}
	\frac{\cratio(z_1,z_3,z_2,z_4)}{|\cratio(z_1,z_3,z_2,z_4)|} =-\frac{\cratio(z_1,z_2,z_3,z_4)}{|\cratio(z_1,z_2,z_3,z_4)|} = \pm 1
	\end{align*}
	where
	\[
	\cratio(z_1,z_2,z_3,z_4) = \frac{(z_1-z_2)(z_3-z_4)}{(z_2-z_3)(z_4-z_1)}.
	\]
	We thus have
	\begin{align*}
	\frac{z_3 - z_1}{|z_3 - z_1|} \frac{z_4 - z_2}{|z_4 - z_2|} = \frac{z_2 - z_1}{|z_2 - z_1|} \frac{z_4 - z_3}{|z_4 - z_3|}
	\end{align*}
	and similarly for $\tilde{z}$. So
	\begin{align*}
	\frac{\tilde{z}_3 - \tilde{z}_1}{|\tilde{z}_3 - \tilde{z}_1|} \frac{\tilde{z}_4 - \tilde{z}_2}{|\tilde{z}_4 - \tilde{z}_2|} &= e^{i(\alpha_1+\alpha_2+\alpha_3+\alpha_4)}\frac{z_3 - z_1}{|z_3 - z_1|} \frac{z_4 - z_2}{|z_4 - z_2|}  \\
	\frac{\tilde{z}_4 - \tilde{z}_2}{|\tilde{z}_4 - \tilde{z}_2|} &= e^{i(\alpha_2+\alpha_4)} \frac{z_4 - z_2}{|z_4 - z_2|}
	\end{align*}
	
\end{proof}

We consider \emph{edge flipping} on triangle meshes: Suppose two triangles $\{ijk\}$ and $\{jil\}$ share a common edge $\{ij\}$. We switch the edge $\{ij\}$ for $\{kl\}$ and produce two triangles $\{klj\}$ and $\{lki\}$. A fact on edge flipping is used \cite{Hurtado1999}:

\begin{lemma} \label{lem:edgeflip}
	Any two triangulations of a polygon are connected via edge flipping.
\end{lemma}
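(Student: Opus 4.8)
The plan is to reduce every triangulation to one canonical triangulation by flips, and then to exploit the reversibility of flips to connect any two triangulations. Fix once and for all a vertex $v_0$ of the polygon $P$ and let $F$ denote the \emph{fan triangulation} at $v_0$, in which every diagonal is incident to $v_0$. I claim that an arbitrary triangulation $T$ of $P$ can be carried to $F$ by a finite sequence of flips. Granting this, if $T_1$ and $T_2$ are any two triangulations, then the sequence taking $T_1$ to $F$, followed by the reverse of the sequence taking $T_2$ to $F$, connects $T_1$ to $T_2$, since each flip is its own inverse.

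To establish the claim I would use the number $d(T)$ of diagonals incident to $v_0$ as a monovariant. Writing $n$ for the number of vertices of $P$, one has $d(T)\le n-3$, with equality exactly when $T=F$. Suppose $T\neq F$, so that $v_0$ is not joined to every other vertex. Then among the triangles of $T$ containing $v_0$, listed in angular order, the sides opposite $v_0$ form a path along $P$ that skips some vertex; hence at least one such triangle $\{v_0,a,b\}$ has its opposite side $\{a,b\}$ an interior diagonal rather than a boundary edge. Let $\{a,b,c\}$ be the triangle sharing $\{a,b\}$ on the side away from $v_0$. Flipping $\{a,b\}$ replaces it by the diagonal $\{v_0,c\}$, which increases $d$ by one. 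Iterating raises $d(T)$ until it reaches its maximum $n-3$, at which point $T=F$; since $d$ is bounded, the process terminates after finitely many flips.

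The step requiring care — and the main obstacle in the general case — is the admissibility of the flip $\{a,b\}\to\{v_0,c\}$: one must know that $v_0,a,c,b$ bound a convex quadrilateral, so that the new diagonal lies inside $P$ and crosses no other edge. For the polygons arising here this holds automatically. Each non-triangular face of the medial graph $MG$ has its vertices among the tangency points lying on a single circle $C_{\mathfrak{u}}$, so it is a cyclic, hence convex, polygon; for a convex polygon every quadrilateral $v_0,a,c,b$ as above is convex and the flip is always valid, and the fan argument goes through verbatim. For a general simple polygon the vertex $c$ need not be visible from $v_0$ and the naive flip may leave $P$; connectivity of the flip graph in that generality is the content of Hurtado, Noy and Urrutia \cite{Hurtado1999}, whose proof replaces the simple monovariant above by a more delicate visibility analysis.
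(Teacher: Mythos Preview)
The paper does not actually prove Lemma~\ref{lem:edgeflip}; it merely states the result and attaches the reference \cite{Hurtado1999}. Your proposal therefore goes well beyond what the paper supplies. Your fan-triangulation/monovariant argument is the standard one and is correct for convex polygons: the key point, that when $d(T)<n-3$ some edge of the link of $v_0$ is a diagonal and can be flipped to a diagonal through $v_0$, is justified exactly as you describe, and convexity guarantees every flip is admissible. You also correctly observe that the only polygons to which the lemma is applied in the paper are the faces of the medial graph, which are cyclic and hence convex, so your argument already covers the intended use. Your caveat about general simple polygons is well placed: the naive flip $\{a,b\}\to\{v_0,c\}$ can fail there, and the connectivity of the flip graph in that generality is precisely what \cite{Hurtado1999} handles. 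In short, your proof is sound for the convex case needed here, and for the general statement you fall back on the same citation the paper uses.
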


\begin{proposition} \label{thm:verrotcircle}
	Suppose $C$ is a circle packing of the combinatorics of a triangle mesh $G$. We denote $z: V(MG) \to \mathbb{C}$ the medial graph formed by the tangency points of the circles. Then a realization $\tilde{z}: V(MG) \to \mathbb{C}$ is the tangency points of another circle packing $\tilde{C}$ if and only if $z$ and $\tilde{z}$ differ by vertex rotation, i.e.
	there exists $\alpha : V(MG) \to \mathbb{R}$ such that
	\begin{align*}
	\frac{\tilde{z}_j - \tilde{z}_i}{|\tilde{z}_j - \tilde{z}_i|} = e^{i(\alpha_i + \alpha_j)  }  \frac{z_j - z_i}{|z_j - z_i|}
	\end{align*}
	for every vertices $i,j$ within the same face of the medial graph $MG$.
\end{proposition}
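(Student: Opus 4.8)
The plan is to reduce the statement to the cross-ratio criterion of Proposition \ref{thm:crpattern}, which already says that $\tilde{z}$ is the tangency configuration of a packing exactly when $\Arg\cratio\equiv\Arg\tilde{\cratio}$ on $E_{int}(TMG)$. Thus it suffices to show that the existence of a vertex rotation $\alpha$ is equivalent to this preservation of cross-ratio arguments. To keep the bookkeeping clean I would pass to the multiplicative form of the rotation condition: for a pair $\{i,j\}$ lying in a common $MG$-face set
\[
u_{ij}:=\frac{(\tilde{z}_j-\tilde{z}_i)\,|z_j-z_i|}{(z_j-z_i)\,|\tilde{z}_j-\tilde{z}_i|}\in\mathbb{S}^1,
\]
which is symmetric in $i,j$, so that the rotation relation reads simply $u_{ij}=a_ia_j$ with $a_i:=e^{\mathbf{i}\alpha_i}$. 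The one computation driving everything is that, for an interior edge $\{ij\}\in E_{int}(TMG)$ shared by triangles $\{ijk\}$ and $\{jil\}$, the unit-modulus part of $\tilde{\cratio}_{ij}/\cratio_{ij}$ equals $u_{jk}u_{il}/(u_{ik}u_{jl})$; note that the four pairs appearing here all lie inside one of the two $MG$-faces meeting along $\{ij\}$.

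For the direction ``vertex rotation $\Rightarrow$ packing'', I would simply substitute $u_{\bullet\bullet}=a_\bullet a_\bullet$ into that quotient: the factors $a_i,a_j,a_k,a_l$ cancel, so $\Arg\tilde{\cratio}_{ij}=\Arg\cratio_{ij}$ on every interior edge of $TMG$, and Proposition \ref{thm:crpattern} delivers the packing $\tilde{C}$. No con-cyclicity input is needed here.

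The substance is in the converse. Assuming $\tilde{C}$ is a packing, Proposition \ref{thm:crpattern} gives $\Arg\cratio\equiv\Arg\tilde{\cratio}$, which by the displayed quotient is exactly the multiplicative closure condition $u_{jk}u_{il}=u_{ik}u_{jl}$ across every interior edge of $TMG$. I would then build $a:V(TMG)=V(MG)\to\mathbb{S}^1$ with $u_{ij}=a_ia_j$ on all edges of $TMG$ as follows: on a seed triangle $\{ijk\}$ the three relations determine $(a_i,a_j,a_k)$ up to a global sign, via one square root $a_i=\sqrt{u_{ij}u_{ki}/u_{jk}}$; then one propagates across shared edges, the value at a new vertex being forced once the two shared-edge values are fixed. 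The closure condition is precisely what makes the two expressions for the new value agree, and since $TMG$ triangulates a disk and is therefore simply connected, there is no monodromy and $a$ is single-valued. Finally I would upgrade the relation from the edges of $TMG$ to all pairs lying in a common $MG$-face: each such face carries con-cyclic points for both $z$ and $\tilde{z}$, so for any diagonal one reaches a triangulation containing it by edge flips (Lemma \ref{lem:edgeflip}) and validates the rotation relation for the newly exposed diagonal at each flip via Lemma \ref{lem:sixthpair}. Setting $\alpha_i:=\arg a_i$ completes the construction.

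I expect the converse to be the main obstacle, and within it the well-definedness of $a$ rather than any single identity. The delicate points are the sign ambiguity inherent in solving $u_{ij}=a_ia_j$ (the replacement $\alpha\mapsto\alpha+\pi$), which must be shown not to accumulate around interior vertices or longer cycles --- this is exactly where simple connectivity of the triangulated disk enters --- and the passage from one fixed triangulation to all same-face pairs, which is the only place where the con-cyclicity of each medial face and the two combinatorial lemmas (\ref{lem:sixthpair} and \ref{lem:edgeflip}) are genuinely used.
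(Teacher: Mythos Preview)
Your argument is correct. The paper's own proof takes a shorter but less self-contained route: it observes that $z$ and $\tilde z$ are the intersection points of two circle patterns with the same pattern structure (the induced orthogonal pattern on $TMG$), and then simply cites \cite[Theorem~2.6]{Lam2015a} to obtain the vertex-rotation function $\alpha$ on $V(TMG)$; Lemmas~\ref{lem:sixthpair} and~\ref{lem:edgeflip} are then invoked, exactly as you do, to pass from the edges of one fixed triangulation to all same-face pairs. Your approach instead stays internal to the present paper: you reduce to Proposition~\ref{thm:crpattern}, express the preservation of $\Arg\cratio$ as the multiplicative cocycle condition $u_{jk}u_{il}=u_{ik}u_{jl}$ on interior edges of $TMG$, and integrate this by hand on the simply connected triangulated disk to produce $a=e^{\mathbf{i}\alpha}$. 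What you gain is an elementary, stand-alone proof that does not rely on the external reference; what the paper gains is brevity, since the construction of $\alpha$ (including the square-root/sign bookkeeping you rightly flag as the delicate step) is absorbed into the cited theorem. The two proofs coincide from the point where Lemmas~\ref{lem:sixthpair} and~\ref{lem:edgeflip} are used.
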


\begin{proof}
	The medial graph $MG$ has two types of faces. One type of them corresponds to the triangular faces of $G$. The other type corresponds to the vertices of $G$ which are cyclic polygons. We triangulate each  polygon without introducing new vertices. In this way, we obtain a triangle mesh $TMG$ with $V(TMG)= V(MG)$. Furthermore, $z$ and $\tilde{z}$ are the intersection points of two circle patterns with the same pattern structure. By \cite[Theorem 2.6]{Lam2015a}, there exists  $\alpha : V(TMG) \to \mathbb{R}$ such that
	\begin{align*}
	\frac{\tilde{z}_j - \tilde{z}_i}{|\tilde{z}_j - \tilde{z}_i|} = e^{i(\alpha_i + \alpha_j)  }  \frac{z_j - z_i}{|z_j - z_i|}
	\end{align*}
	for every $\{ij\} \in E(TMG)$. Lemma \ref{lem:sixthpair} and \ref{lem:edgeflip} imply $\alpha$ is independent of the triangulation.
\end{proof}

\subsection{Discrete harmonic functions}

Various graph Laplacians have been proposed with different edge weights. The cotangent Laplacian is a graph Laplacian usually defined on a triangle mesh with cotangent weights induced from edge lengths:
\begin{definition}\label{def:harmonic}
	Suppose $z:V \to \mathbb{C}$ is a realization of a triangle mesh. A function $u:V \to \mathbb{R}$ is harmonic with respect to the cotangent Laplacian if for every interior vertex $i$
	\[
	\sum_j (\cot \angle jki + \cot \angle ilj) (u_j - u_i) =0
	\]
	where $\{ijk\},\{jil\}$ are two neighboring faces sharing edge $\{ij\}$ (see Figure \ref{fig:orientation}).
\end{definition}

Discrete harmonic functions were introduced on the square lattice by Ferrand \cite{Ferrand1944} and Duffin \cite{Duffin1956} in terms of discrete Cauchy-Riemann equations. This notion was later generalized to triangular meshes and led to the cotangent Laplacian, which is central to linear discrete complex analysis \cite{Dmitry2011}. A convergence result of discrete harmonic functions was discussed in \cite{Skopenkov2013}. 

The cotangent Laplacian can be generalized to meshes where all faces are cyclic. To apply the cotangent formula, one needs to triangulate the mesh by adding diagonals to all cyclic faces. However, the cotangent weight on the diagonals vanishes because neighboring faces are inscribed in the same circle and the sum of opposite angles is $\pi$. This observation implies that the cotangent weights are defined on the original edges of the given mesh and are independent of the triangulation.

\begin{lemma}
	Suppose  the medial graph $MG$ is subdivided into a triangle mesh $TMG$ with the vertex set $V(MG)= V(TMG)$ and edge set $E(MG) \subset E(TMG)$. Using the notation in Fig. \ref{fig:orientation}, we have the following:	
	\begin{enumerate}
		\item If $\{ij\} \in  E(TMG)-E(MG)$, then
		\[
		\cot \angle jki + \cot \angle ilj =0
		\]
		\item If $\{ij\} \in  E(MG)$, then $\{ij\}$ connects some tangency points  $C_{\mathfrak{u}} \cap C_{\mathfrak{v}}$ and $C_{\mathfrak{s}} \cap C_{\mathfrak{v}}$. We have
		\begin{align*}
		\cot \angle jki + \cot \angle ilj = \frac{R_\mathfrak{v}}{R_{ijk}} + \frac{R_{ijk}}{R_\mathfrak{v}} 
		\end{align*} 
		where $R_{\mathfrak{v}}$ and $R_{ijk}$ are the radii of circles $C_{\mathfrak{v}}$ and $C_{ijk}$.
	\end{enumerate}
	In particular, the cotangent weights are well defined on the medial graph $MG$ and are independent of the triangulation.
\end{lemma}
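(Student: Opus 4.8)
The plan is to treat both parts as instances of the inscribed angle theorem, applied in each case to the circle circumscribing the two triangles adjacent to the edge under consideration. First I would recall the structure of the faces of $MG$ from Section \ref{sec:background}: each cyclic polygonal face corresponds to a packing circle $C_{\mathfrak{v}}$, its vertices being the tangency points lying on $C_{\mathfrak{v}}$, while each triangular face is an interstice whose three vertices $z_i,z_j,z_k$ lie on the circle $C_{ijk}$ that meets the three mutually tangent circles orthogonally. Consequently an edge $\{ij\}\in E(TMG)-E(MG)$ is a diagonal added inside a single cyclic polygon, whereas an edge $\{ij\}\in E(MG)$ always borders an interstice triangle on one side and a cyclic polygon (around $C_{\mathfrak{v}}$) on the other.

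For part (1), both triangles $\{ijk\}$ and $\{jil\}$ adjacent to such a diagonal lie in the same polygonal face, so $i,j,k,l$ are concyclic on one circle $C_{\mathfrak{v}}$, with $k$ and $l$ on opposite arcs of the chord $\{ij\}$. The inscribed angle theorem then gives $\angle jki + \angle ilj = \pi$, whence $\cot\angle jki + \cot\angle ilj = \cot\angle jki + \cot(\pi - \angle jki)=0$.

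For part (2), the key observation is that $z_i$ and $z_j$ are exactly the two intersection points of $C_{\mathfrak{v}}$ with the interstice circle $C_{ijk}$, which meet orthogonally, so $\{ij\}$ is their common chord. Writing $d:=|z_i-z_j|$ and applying the extended law of sines in $\{ijk\}$ (inscribed in $C_{ijk}$) and in $\{jil\}$ (inscribed in $C_{\mathfrak{v}}$, with $l$ on the major arc since $z_i,z_j$ are adjacent vertices of the polygon), I obtain
\[
\sin\angle jki = \frac{d}{2R_{ijk}}, \qquad \sin\angle ilj = \frac{d}{2R_{\mathfrak{v}}}.
\]
Letting $2\gamma$ and $2\beta$ be the central angles of the chord in $C_{ijk}$ and $C_{\mathfrak{v}}$, one has $\angle jki=\gamma$ and $\angle ilj=\beta$, and the orthogonality relation $|O_{\mathfrak{v}}O_{ijk}|^2 = R_{\mathfrak{v}}^2 + R_{ijk}^2$ for the centers $O_{\mathfrak{v}},O_{ijk}$, combined with $|O_{\mathfrak{v}}O_{ijk}| = R_{\mathfrak{v}}\cos\beta + R_{ijk}\cos\gamma$, forces $\beta+\gamma=\pi/2$ after expanding. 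Hence $\cos\gamma=\sin\beta=d/(2R_{\mathfrak{v}})$, so $\cot\gamma = R_{ijk}/R_{\mathfrak{v}}$ and $\cot\beta=\tan\gamma = R_{\mathfrak{v}}/R_{ijk}$, giving
\[
\cot\angle jki + \cot\angle ilj = \cot\gamma + \tan\gamma = \frac{R_{ijk}}{R_{\mathfrak{v}}} + \frac{R_{\mathfrak{v}}}{R_{ijk}}.
\]

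Finally, independence of the triangulation follows at once: every added diagonal contributes weight $0$ by part (1), while for an edge of $MG$ the weight in part (2) depends only on the intrinsic radii $R_{\mathfrak{v}}$ and $R_{ijk}$, not on the auxiliary vertex $l$, hence not on the chosen triangulation. I expect the main obstacle to be the bookkeeping of arcs and orientations — verifying that $l$ lies on the major arc so that $\angle ilj=\beta$ rather than $\pi-\beta$, and similarly that $\angle jki=\gamma$ — since the wrong arc would flip a sign and produce a difference of radii instead of a sum. It is precisely the complementarity $\beta+\gamma=\pi/2$, coming from orthogonality of $C_{\mathfrak{v}}$ and $C_{ijk}$, that makes the two cotangents combine into the symmetric expression.
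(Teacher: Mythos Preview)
Your proposal is correct and follows essentially the same route as the paper. Both arguments reduce part~(1) to the inscribed-angle theorem for concyclic points and part~(2) to the complementarity $\angle jki+\angle ilj=\pi/2$ coming from the orthogonality of $C_{\mathfrak v}$ and $C_{ijk}$; the paper simply asserts $\angle jki=\pi/2-\angle ilj$ and $\cot\angle jki=R_{ijk}/R_{\mathfrak v}$ in one line, whereas you spell out the law-of-sines computation that justifies it. One small point: in part~(1) the paper also allows the possibility $\angle jki+\angle ilj=0$ (signed/oriented angles, or $k,l$ on the same arc in a non-convex situation), which still gives vanishing cotangent sum---you only treat the opposite-arc case $=\pi$, so your caveat about arc bookkeeping is exactly where a reader might want one extra sentence.
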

\begin{proof}
	If $\{ij\} \in  E(TMG)-E(MG)$, then $z_i,z_j,z_k,z_l$ are con-cyclic points, we either have the sum of angles
	\begin{align*}
	\angle jki + \angle ilj = \pi \quad \text{or} \quad 0
	\end{align*}
	which both leads to $ \cot \angle jki + \cot \angle ilj =0$.
	If $\{ij\} \in  E(MG)$, then the circles $C_{ijk}$ and $C_{\mathfrak{v}}$ intersect orthogonally. We have $\angle jki = \pi/2 - \angle ilj$ and $\cot \angle jki = R_{ijk}/R_{\mathfrak{v}}$.
\end{proof}

In the proposition below, the relation between (1) and (2) below has been shown by Glickenstein \cite{Glickenstein2011} while the one between (1) and (3) is similar to the case for circle patterns \cite{Lam2015a}.
\begin{proposition} \label{prop:infinrot}
	Using the notation in Fig. \ref{fig:orientation}, there is a one-to-one correspondence between
	\begin{enumerate}
		\item infinitesimal deformations of a circle packing up to a Euclidean motion;
		\item discrete harmonic functions $\sigma: V(G) \to \mathbb{R}$ satisfying for every $\mathfrak{u} \in V_{int}(G)$
		\begin{align*}
		\sum_{\mathfrak{v}} \frac{R_{ijk}+ R_{imn}}{R_{\mathfrak{u}} + R_{\mathfrak{v}}} (\sigma_{\mathfrak{v}} - \sigma_{\mathfrak{u}}) =0
		\end{align*} 
		where $\sigma$ describes the change of radii $\dot{R} = \sigma R$;
		\item discrete harmonic functions on the medial graph $\alpha: V(MG) \to \mathbb{R}$ in the sense of the cotangent Laplacian, i.e. for every $i \in V(MG)$
		\begin{align*}
		0=&(\frac{R_{\mathfrak{v}}}{R_{ijk}}+ \frac{R_{ijk}}{R_{\mathfrak{v}}})(\alpha_{j}-\alpha_{i}) + (\frac{R_{\mathfrak{u}}}{R_{ijk}}+ \frac{R_{ijk}}{R_{\mathfrak{u}}})(\alpha_{k}-\alpha_{i}) \\ &+ (\frac{R_{\mathfrak{v}}}{R_{imn}}+ \frac{R_{imn}}{R_{\mathfrak{v}}})(\alpha_{n}-\alpha_{i}) + (\frac{R_{\mathfrak{u}}}{R_{imn}}+ \frac{R_{imn}}{R_{\mathfrak{u}}})(\alpha_{m}-\alpha_{i}) 
		\end{align*}
		Here $\alpha$  is induced from infinitesimal vertex rotation on the medial graph.
	\end{enumerate}
	The two types of harmonic functions in (2) and (3) are related via
	\[
	\alpha_{i} = \eta_{\mathfrak{uvs}} - \frac{R_{ijk}}{R_{\mathfrak{u}} + R_{\mathfrak{v}}} (\sigma_{\mathfrak{v}} - \sigma_{\mathfrak{u}})
	\]
	where $\eta:F(G) \to \mathbb{R}$ is a harmonic conjugate of $\sigma$.		
\end{proposition}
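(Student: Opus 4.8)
The plan is to prove the three-way equivalence by treating $(1)\Leftrightarrow(2)$ and $(1)\Leftrightarrow(3)$ separately and then making the change of variables between $(2)$ and $(3)$ explicit. For $(1)\Leftrightarrow(2)$ I would invoke Glickenstein's computation: up to a Euclidean motion a circle packing with fixed combinatorics is determined by its radii, and an infinitesimal deformation preserving all tangencies is the same as a radius variation $\dot R=\sigma R$ for which every interior vertex keeps cone angle $2\pi$. Writing the angle of the center triangle $\{\mathfrak{uvs}\}$ at $c_\mathfrak{u}$ as $\gamma_\mathfrak{u}=2\arctan(R_{ijk}/R_\mathfrak{u})$, where $R_{ijk}$ is the inradius of that triangle of centers, the flatness condition at $\mathfrak{u}$ is $\sum\gamma_\mathfrak{u}=2\pi$; differentiating and reorganizing the resulting sum over incident triangles into a sum over incident edges yields exactly the weighted harmonic equation in $(2)$. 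This reorganization is Glickenstein's, so I would cite it rather than repeat it.

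For $(1)\Leftrightarrow(3)$ I would use that the tangency points realize the induced orthogonal circle pattern on $MG$. The infinitesimal form of Proposition \ref{thm:verrotcircle} identifies deformations of the packing with vertex rotations $\alpha:V(MG)\to\mathbb{R}$, and the circle-pattern theory of \cite[Theorem~2.6]{Lam2015a} (together with the harmonicity viewpoint of Proposition \ref{prop:infmobgen}) says that such an $\alpha$ integrates to an actual angle-preserving deformation if and only if it is harmonic for the cotangent Laplacian on any triangulation $TMG$. By the Lemma preceding the statement the cotangent weights vanish on the diagonals $E(TMG)\setminus E(MG)$ and take the values $R_\mathfrak{v}/R_{ijk}+R_{ijk}/R_\mathfrak{v}$ on the edges of $MG$, so this harmonicity is precisely the four-term equation in $(3)$.

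\textbf{The bridge between (2) and (3).} The substantive work is the explicit change of variables. The key geometric observation is that the common tangent of $C_\mathfrak{u},C_\mathfrak{v}$ at $z_i$ is perpendicular to the segment $c_\mathfrak{u}c_\mathfrak{v}$, so $2\alpha_i$ equals the rate of change of $\arg(c_\mathfrak{v}-c_\mathfrak{u})$; equivalently, since $C_{ijk}$ is the incircle of the triangle of centers touching $c_\mathfrak{u}c_\mathfrak{v}$ at $z_i$, the vertex rotation $\alpha_i$ is half the rotation rate of the radius of $C_{ijk}$ at $z_i$. Within a single face $\{\mathfrak{uvs}\}$ the three touch-point directions seen from the center of $C_{ijk}$ differ by $\pi-\gamma_\mathfrak{u},\pi-\gamma_\mathfrak{v},\pi-\gamma_\mathfrak{s}$, so the three rotations $\alpha_i,\alpha_j,\alpha_k$ differ by the derivatives $\dot\gamma$. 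I would compute these from $\gamma=2\arctan(R_{ijk}/R)$ together with $\dot R=\sigma R$, eliminating $\dot R_{ijk}$ by the relation $\dot\gamma_\mathfrak{u}+\dot\gamma_\mathfrak{v}+\dot\gamma_\mathfrak{s}=0$. The outcome should be that each $\alpha$ on the boundary of the face splits as a single face quantity, which I define to be $\eta_{\mathfrak{uvs}}$, minus an edge term proportional to $(\sigma_\mathfrak{v}-\sigma_\mathfrak{u})$ with coefficient $R_{ijk}/(R_\mathfrak{u}+R_\mathfrak{v})$, giving the asserted formula. Imposing that the two faces across an edge assign the same value to $\alpha_i$ then forces $\eta_{\mathfrak{uvs}}-\eta_{\mathfrak{vut}}=\frac{R_{ijk}+R_{imn}}{R_\mathfrak{u}+R_\mathfrak{v}}(\sigma_\mathfrak{v}-\sigma_\mathfrak{u})$, the discrete Cauchy--Riemann relation exhibiting $\eta:F(G)\to\mathbb{R}$ as a harmonic conjugate of $\sigma$; single-valuedness of $\eta$ around interior vertices is equivalent to $(2)$, which closes the correspondence.

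\textbf{Main obstacle.} I expect the main difficulty to be this last computation, namely verifying that the derivatives $\dot\gamma$ inside each face reassemble \emph{exactly} into one face constant plus the edge coefficient $R_{ijk}/(R_\mathfrak{u}+R_\mathfrak{v})$ --- that is, pinning down the precise constant and checking the per-face self-consistency of the formula for $\alpha$. Concretely this reduces to the trigonometric identity relating $-\tfrac12\dot\gamma_\mathfrak{v}$ (equivalently $\alpha_j-\alpha_i$) to $\frac{R_{ijk}}{R_\mathfrak{u}+R_\mathfrak{v}}(\sigma_\mathfrak{v}-\sigma_\mathfrak{u})-\frac{R_{ijk}}{R_\mathfrak{v}+R_\mathfrak{s}}(\sigma_\mathfrak{s}-\sigma_\mathfrak{v})$, which must be checked with the factors tracked carefully since the two sides carry the risk of a stray factor from the $\alpha_i+\alpha_j$ convention. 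Once that identity and the matching of the conjugate weight to $(2)$ are in hand, the stated bijections $(1)\Leftrightarrow(2)\Leftrightarrow(3)$ and the relation between $\alpha$, $\sigma$, and $\eta$ follow formally.
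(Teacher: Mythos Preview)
Your high-level decomposition matches the paper's stated plan: it cites Glickenstein for $(1)\Leftrightarrow(2)$ and \cite{Lam2015a} for $(1)\Leftrightarrow(3)$, and devotes the body of the proof to the bridge between $\sigma$ and $\alpha$. Where you and the paper diverge is in how that bridge is built. You propose to work through the angle derivatives $\dot\gamma_\mathfrak{u}=\tfrac{d}{dt}\,2\arctan(R_{ijk}/R_\mathfrak{u})$ and argue geometrically that $\alpha_j-\alpha_i$ is (a multiple of) $\dot\gamma_\mathfrak{v}$. The paper instead differentiates the tangency constraint $|c_\mathfrak{v}-c_\mathfrak{u}|=R_\mathfrak{u}+R_\mathfrak{v}$ directly, writing
\[
\dot c_\mathfrak{v}-\dot c_\mathfrak{u}=\Bigl(\tfrac{\sigma_\mathfrak{u}R_\mathfrak{u}+\sigma_\mathfrak{v}R_\mathfrak{v}}{R_\mathfrak{u}+R_\mathfrak{v}}+\mathbf{i}\alpha_i\Bigr)(c_\mathfrak{v}-c_\mathfrak{u}),
\]
and then uses the vectorial closure $\sum(\dot c_\mathfrak{v}-\dot c_\mathfrak{u})=0$ around each face $\{\mathfrak{uvs}\}$, together with the orthogonality relation $z_j-z_i=\mathbf{i}R_{ijk}\bigl(\tfrac{c_\mathfrak{v}-c_\mathfrak{u}}{R_\mathfrak{u}+R_\mathfrak{v}}-\tfrac{c_\mathfrak{s}-c_\mathfrak{v}}{R_\mathfrak{s}+R_\mathfrak{v}}\bigr)$. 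This immediately produces a single face constant $\eta_{\mathfrak{uvs}}$ with $\eta_{\mathfrak{uvs}}=\alpha_i+\tfrac{R_{ijk}}{R_\mathfrak{u}+R_\mathfrak{v}}(\sigma_\mathfrak{v}-\sigma_\mathfrak{u})$, from which both harmonic equations and the conjugacy relation drop out without ever touching $\dot\gamma$ or $\dot R_{ijk}$. Your route should also arrive there, but it requires eliminating $\dot R_{ijk}$ by hand, which the paper's vectorial argument sidesteps.

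One concrete correction: the factor-of-two worry you flag is real in your write-up but absent in the paper's setup. With the paper's definition of $\alpha_i$ as the imaginary part above, $\alpha_i$ itself (not $2\alpha_i$) is $\tfrac{d}{dt}\arg(c_\mathfrak{v}-c_\mathfrak{u})$, and since the incircle radius at $z_i$ is perpendicular to $c_\mathfrak{u}c_\mathfrak{v}$, your incircle argument gives $\alpha_j-\alpha_i=-\dot\gamma_\mathfrak{v}$, not $-\tfrac12\dot\gamma_\mathfrak{v}$. So the ``stray factor'' you anticipated is already present in your sketch; the paper's formulation avoids the issue by never invoking the $\alpha_i+\alpha_j$ convention on the medial edges and working only with primal edges $c_\mathfrak{u}c_\mathfrak{v}$.
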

\begin{proof}
	
	Here we mainly show the correspondence between the two types of discrete harmonic functions, though along the way the statements $(1) \implies (3)$ and $(1)\implies (2)$ are proven. Every circle packing can be described by the centers $c_{\mathfrak{u}} \in \mathbb{C}$ and the radii $R_{\mathfrak{u}}$ of the circles satisfying for every $\mathfrak{uv} \in E(G)$: 
	\begin{align*}
	|c_{\mathfrak{v}} - c_{\mathfrak{u}}|^2 &= (R_{\mathfrak{u}} + R_{\mathfrak{u}})^2
	\end{align*}
	A first order change of the centers $\dot{c}$ and the radii $\dot{R}$ describe an infinitesimal deformation of a circle packing if and only if 
	\begin{align*}
	\langle \dot{c}_{\mathfrak{v}} - \dot{c}_{\mathfrak{u}}, c_{\mathfrak{v}} - c_{\mathfrak{u}}  \rangle  &= (\dot{R}_{\mathfrak{u}} + \dot{R}_{\mathfrak{u}}) (R_{\mathfrak{u}} + R_{\mathfrak{u}})
	\end{align*}
	or equivalently
	\begin{align*}
	\dot{c}_{\mathfrak{v}} - \dot{c}_{\mathfrak{u}}
	&=(\frac{\dot{R}_{\mathfrak{u}} + \dot{R}_{\mathfrak{v}}}{R_{\mathfrak{u}} + R_{\mathfrak{v}}} + \mathbf{i} \alpha_i ) (c_{\mathfrak{v}} - c_{\mathfrak{u}}) = (\frac{ \sigma_{\mathfrak{u}} R_{\mathfrak{u}} + \sigma_{\mathfrak{v}} R_{\mathfrak{v}}}{R_{\mathfrak{u}} + R_{\mathfrak{v}}} + \mathbf{i} \alpha_i ) (c_{\mathfrak{v}} - c_{\mathfrak{u}})
	\end{align*}
	for some $\alpha_i \in \mathbb{R}$ and $\sigma:= \dot{R}/R$. Here $\{i\}$ is a vertex of the medial graph which corresponds to the edge $\{\mathfrak{uv}\}$. We claim that both $\sigma$ and $\alpha$ are discrete harmonic functions.
	
	For a triangle $\{ \mathfrak{uvs}\} \in F(G)$, we denote the tangency points $z_{i}\in C_{\mathfrak{u}} \cap C_{\mathfrak{v}}$, $z_{j}\in C_{\mathfrak{v}} \cap C_{\mathfrak{s}}$, $z_{k}\in C_{\mathfrak{s}} \cap C_{\mathfrak{u}}$ and $R_{ijk}$ the radius of circle $C_{ijk}$ through $z_i,z_j,z_k$. Since $C_{ijk}$ and $C_{\mathfrak{u}}$ are orthogonal
	\[
	z_j- z_i = R_{\mathfrak{v}} (\frac{c_{\mathfrak{v}}-c_{\mathfrak{u}}}{R_{\mathfrak{u}}+R_{\mathfrak{v}}}+ \frac{c_{\mathfrak{s}}-c_{\mathfrak{v}}}{R_{\mathfrak{s}}+R_{\mathfrak{v}}})= \mathbf{i} R_{ijk} (\frac{c_{\mathfrak{v}}-c_{\mathfrak{u}}}{R_{\mathfrak{v}}+R_{\mathfrak{v}}} - \frac{c_{\mathfrak{s}}-c_{\mathfrak{v}}}{R_{\mathfrak{s}}+R_{\mathfrak{v}}}).
	\]
	and similarly for $z_k - z_j, z_i - z_k$. Hence
	\begin{align*}
	0=& (\dot{c}_{\mathfrak{v}} - \dot{c}_{\mathfrak{u}}) + (\dot{c}_{\mathfrak{s}} - \dot{c}_{\mathfrak{v}}) +(\dot{c}_{\mathfrak{u}} - \dot{c}_{\mathfrak{s}}) \\
	=& \phantom{+} \sigma_{\mathfrak{u}} R_{\mathfrak{u}}  (\frac{c_{\mathfrak{u}}-c_{\mathfrak{s}}}{R_{\mathfrak{u}}+R_{\mathfrak{s}}}+ \frac{c_{\mathfrak{v}}-c_{\mathfrak{u}}}{R_{\mathfrak{v}}+R_{\mathfrak{u}}}) + \sigma_{\mathfrak{v}} R_{\mathfrak{v}}  (\frac{c_{\mathfrak{v}}-c_{\mathfrak{u}}}{R_{\mathfrak{v}}+R_{\mathfrak{u}}}+ \frac{c_{\mathfrak{s}}-c_{\mathfrak{v}}}{R_{\mathfrak{s}}+R_{\mathfrak{v}}}) + \sigma_{\mathfrak{s}} R_{\mathfrak{s}}  (\frac{c_k-c_{\mathfrak{v}}}{R_{\mathfrak{v}}+R_k}+ \frac{c_{\mathfrak{u}}-c_{\mathfrak{s}}}{R_{\mathfrak{s}}+R_{\mathfrak{u}}}) \\ &+  \mathbf{i} \alpha_{i} (c_{\mathfrak{v}} -  c_{\mathfrak{u}})+  \mathbf{i} \alpha_{j} (c_{\mathfrak{s}} - c_{\mathfrak{v}})+  \mathbf{i} \alpha_{k} (c_{\mathfrak{u}} - c_{\mathfrak{s}}) \\
	=& \phantom{+}  \mathbf{i}(\frac{R_{ijk}}{R_{\mathfrak{u}} + R_{\mathfrak{v}}} ( \sigma_{\mathfrak{v}} -\sigma_{\mathfrak{u}}) +  \alpha_{i} ) (c_{\mathfrak{v}}-c_{\mathfrak{u}}) + \mathbf{i}(\frac{R_{ijk}}{R_{\mathfrak{v}} + R_{\mathfrak{s}}} ( \sigma_{\mathfrak{s}} -\sigma_{\mathfrak{v}}) +  \alpha_{j} ) (c_{\mathfrak{s}}-c_{\mathfrak{v}}) \\&+ \mathbf{i}( \frac{R_{ijk}}{R_{\mathfrak{s}} + R_{\mathfrak{u}}} ( \sigma_{\mathfrak{u}} -\sigma_{\mathfrak{s}}) +  \alpha_{k} ) (c_{\mathfrak{u}}- c_{\mathfrak{s}})
	\end{align*}
	Since $\dim\{\mbox{span}_{\mathbb{R}}\{c_j-c_i,c_k-c_j,c_i-c_k\} \} = 2$ and $(c_j-c_i)+(c_k-c_j)+(c_i-c_k)=0$, there exists $\eta_{\mathfrak{uvs}} \in \mathbb{R}$ such that
	\begin{align}\label{eq:angles}
	\eta_{\mathfrak{uvs}} =& \frac{R_{ijk}}{R_{\mathfrak{u}} + R_{\mathfrak{v}}} ( \sigma_{\mathfrak{v}} -\sigma_{\mathfrak{u}}) +  \alpha_{i} = \frac{R_{ijk}}{R_{\mathfrak{v}} + R_{\mathfrak{s}}} ( \sigma_{\mathfrak{s}} -\sigma_{\mathfrak{v}}) +  \alpha_{j} = \frac{R_{ijk}}{R_{\mathfrak{s}} + R_{\mathfrak{u}}} ( \sigma_{\mathfrak{u}} -\sigma_{\mathfrak{s}}) +  \alpha_{k}
	\end{align}
	Thus, 
	\begin{align*}
	\sum_{\mathfrak{v}} \frac{R_{ijk}+ R_{imn}}{R_{\mathfrak{u}} + R_{\mathfrak{v}}} (\sigma_{\mathfrak{v}} - \sigma_{\mathfrak{u}})= \sum_{\mathfrak{v}} \eta_{\mathfrak{uvs}} - \eta_{\mathfrak{vut}} =0
	\end{align*}
	which proves the claim $(1) \implies (2)$ and $\eta:F \to \mathbb{R}$ is called a harmonic conjugate of $\sigma$. Together with the identity $R^2_{ijk} = R_{\mathfrak{u}} R_{\mathfrak{v}} R_{\mathfrak{s}}/(R_{\mathfrak{u}}+R_{\mathfrak{v}}+R_{\mathfrak{s}})$ one can also rewrite Eq. \eqref{eq:angles} to obtain
	\begin{align*}
	(\frac{R_{\mathfrak{v}}}{R_{ijk}}+ \frac{R_{ijk}}{R_{\mathfrak{v}}})(\alpha_{j}-\alpha_{i})
	=& \frac{ R_{\mathfrak{s}} + R_{\mathfrak{v}}}{R_{\mathfrak{u}} + R_{\mathfrak{v}} + R_{\mathfrak{s}}} (\sigma_{\mathfrak{v}} - \sigma_{\mathfrak{u}}) + \frac{ R_{\mathfrak{u}} + R_{\mathfrak{v}}}{R_{\mathfrak{u}} + R_{\mathfrak{v}} + R_{\mathfrak{s}}} (\sigma_{\mathfrak{v}} - \sigma_{\mathfrak{s}}) 
	\end{align*}
	and deduce $\alpha$ is a discrete harmonic function with respect to the cotangent Laplacian. 
\end{proof}

Combining Proposition \ref{prop:infmobgen}, \ref{prop:infmobkoebe} and \ref{prop:infinrot}, we have three equivalent ways to parameterize infinitesimal deformations of circle packings.

\begin{corollary} \label{thm:equiinfin}
	Each of the following has a one-to-one correspondence with infinitesimal deformations of circle packings up to a trivial motion:
	\begin{enumerate}
		\item Holomorphic quadratic differentials of general type (infinitesimal change in cross ratios as circle patterns). \label{corr:2} 
		\item Holomorphic quadratic differentials of Koebe type (infinitesimal change in cross ratios as circle packings). \label{corr:3} 
		\item Discrete harmonic functions in the sense of the cotangent Laplacian (infinitesimal change in vertex rotation). \label{corr:1} 
	\end{enumerate}
\end{corollary}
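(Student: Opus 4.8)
The plan is to read the statement as the conjunction of the three cited propositions, all of which encode the \emph{same} object---an infinitesimal deformation of the circle packing---and differ only in how they record it. I would take as the common anchor the induced field $\dot{z} : V(MG) \to \mathbb{C}$ on the tangency points, equivalently the pair $(\dot{c}, \dot{R})$ of infinitesimal changes of the centers and radii; these determine each other, since each circle $C_{\mathfrak{u}}$ is the circumscribed circle of the cyclic polygon of tangency points around $\mathfrak{u}$. Every one of the three parametrizations is then a function of this single datum.

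First I would treat (1) and (2) together. By Lemma \ref{lem:infmob}, $\dot{z}$ determines the face fields $\hat{\Phi} : F(TMG) \to sl(2,\mathbb{C})$ and $\Phi : F(G) \to sl(2,\mathbb{C})$, and Propositions \ref{prop:infmobgen} and \ref{prop:infmobkoebe} extract from these the general-type differential $q$ and the Koebe-type differential $\lambda$ respectively. In each case the closing sentence of the proposition states that the deformation is a global M\"obius transformation exactly when the differential vanishes, so both $q$ and $\lambda$ are in bijection with deformations modulo M\"obius transformations (and Theorem \ref{thm:kobegeneral} already matches the two bijections). For (3) I would invoke Proposition \ref{prop:infinrot}: from $(\dot{c}, \dot{R})$ one reads off $\sigma = \dot{R}/R$ and the companion vertex-rotation field $\alpha$, and that proposition gives a one-to-one correspondence between these cotangent-harmonic functions and deformations up to a Euclidean motion.

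If one reads ``trivial motion'' as the group appropriate to each object---M\"obius for the two holomorphic quadratic differentials, Euclidean for the harmonic function---the corollary is now immediate. The only point that deserves genuine care, and the step I expect to be the main obstacle, is reconciling these two quotients into a single statement: the differentials are invariant under the full six-dimensional group of infinitesimal M\"obius transformations, whereas the harmonic functions are invariant only under the three-dimensional Euclidean group. To line them up I would compute how the three extra M\"obius directions act on the harmonic side. An infinitesimal scaling gives $\sigma \equiv \mathrm{const}$, and the two special-conformal directions ($T(z) = z + \epsilon z^2$, so $\sigma = \dot{R}/R = 2\,\Re(\epsilon c_{\mathfrak{u}})$) give $\sigma$ equal to the restrictions of the coordinate functions $\Re z$ and $\Im z$. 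Since these all arise from genuine deformations they are automatically cotangent-harmonic, so the M\"obius-trivial deformations fill out exactly the three-dimensional space of harmonic functions spanned by $1, \Re z, \Im z$; quotienting by it converts the Euclidean quotient of Proposition \ref{prop:infinrot} into the M\"obius quotient of the other two, after which all three correspondences become mutually compatible and the corollary follows.
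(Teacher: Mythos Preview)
Your proposal is correct and follows essentially the same route as the paper: the corollary is obtained by combining Propositions \ref{prop:infmobgen}, \ref{prop:infmobkoebe}, and \ref{prop:infinrot}, each of which already establishes the relevant bijection with infinitesimal deformations.

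The one place where you go beyond the paper is your final paragraph, where you try to reconcile the M\"obius quotient for $q,\lambda$ with the Euclidean quotient for the harmonic functions by computing the effect of scaling and the two special-conformal directions on $\sigma$. The paper does not attempt this: immediately after the corollary it simply declares that ``trivial'' is to be read differently for each item (M\"obius for the differentials, and---note---\emph{translation and scaling}, not the Euclidean group, for the vertex-rotation harmonic function $\alpha$). Your extra computation is correct for $\sigma$, but be aware that for $\alpha$ the kernel is translation plus scaling (rotation sends $\alpha$ to a nonzero constant), which is why the paper's remark differs slightly from the ``Euclidean'' you quote from Proposition \ref{prop:infinrot}. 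This is a minor bookkeeping point and does not affect the validity of your argument.
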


In the corollary, trivial infinitesimal deformations for discrete harmonic functions are translation and scaling while those for holomorphic quadratic differentials are M\"{o}bius transformations.

\section*{Acknowledgment}
The author would like to thank Ken Stephenson for pointing out Orick's thesis.

\bibliographystyle{siam}
\bibliography{circpacking}

\end{document}